\documentclass[11pt]{article}

\usepackage{amsmath,amssymb}
\usepackage[margin=1in]{geometry}
\usepackage{graphicx}

\usepackage{amsthm}
\usepackage{natbib}

\usepackage{amsfonts}
\usepackage{mathtools}
\newtheorem{theorem}{Theorem}[section]
\newtheorem{lemma}[theorem]{Lemma}
\newtheorem{proposition}[theorem]{Proposition}
\newtheorem{corollary}[theorem]{Corollary}
\newtheorem{definition}{Definition}[section]
\newtheorem{assumption}{Assumption}[section]
\theoremstyle{remark}
\newtheorem{remark}{Remark}
\usepackage{hyperref}

\usepackage[utf8]{inputenc}

\def\eps{\varepsilon}
\def\C{\mathbb{C}}
\def\D{\mathbb{D}}
\def\E{\mathbf{E}}
\def\R{\mathbb{R}}
\def\Z{\mathbb{Z}}
\def\Pr{\mathbf{P}}
\def\cF{\mathcal{F}}
\def\cG{\mathcal{G}}
\def\cL{\mathcal{L}}
\def\cN{\mathcal{N}}
\def\cR{\mathcal{R}}
\def\cU{\mathcal{U}}
\def\cV{\mathcal{V}}
\def\cE{\mathcal{E}}
\def\pop{p^{\mathrm{op}}}
\def\pcl{p^{\mathrm{cl}}}
\def\rad{\mathrm{rad}}

\newcommand{\eqn}[2]{\begin{equation}\label{#1}#2\end{equation}}
\newcommand{\eqnst}[1]{\begin{equation*}#1\end{equation*}}
\newcommand{\eqnspl}[2]{\begin{equation}\begin{split}\label{#1}%
   #2\end{split}\end{equation}}
\newcommand{\eqnsplst}[1]{\begin{equation*}\begin{split}%
   #1\end{split}\end{equation*}}

\title{Scaling and partial universality of the height zero probability in the 2D Abelian sandpile}

\author{Miles W. Elvidge\thanks{School of Mathematical Sciences, 
University of Lancaster, Lancaster, LA1 4YR, United Kingdom,
Email: \texttt{m.elvidge1@lancaster.ac.uk}} \ 
and Antal A. J\'arai\thanks{Department of Mathematical Sciences,
University of Bath, Claverton Down, Bath, BA2 7AY, United Kingdom,
Email: \texttt{a.jarai@bath.ac.uk}}}

\begin{document}

\maketitle

\begin{abstract}
We study the extent of universality in the scaling of the height $0$ probability of
the stationary Abelian sandpile on lattice approximations of a region 
$U \subset \C$ with lattice spacing $\eps$ and with open boundary conditions. 
We show that under certain symmetry assumptions on the lattice, in the scaling 
limit $\eps \to 0$ this probability at a point $z \in U$ equals 
$p_\cG(0) + \eps^2 c_\cG f_U(z) + O(\eps^3)$, where $p_\cG(0)$ 
is the height $0$ probability on the full lattice, $c_\cG > 0$ is a constant 
depending only on the lattice, and $f_U$ is a conformally covariant positive 
real-valued function depending only on $U$. This generalises a result of 
Brankov, Ivashkevich and Priezzhev (1993), who considered the square 
lattice and the upper half plane. It also generalizes Example 4.10 in the recent 
study of the fermionic DGFF of Adame-Carillo and Ruszel (2025).
We show via counterexamples that our symmetry assumption cannot be omitted,
although conformal covariance may be recovered via an unusual scaling. 
In particular, the natural analogue of our result fails on general isoradial graphs. 
Our work is motivated by Jeng, Piroux and Ruelle (2006), who computed the 
correction terms for all height variables in the special case of the upper 
half plane in $\Z^2$. They conjectured that their results hold more generally 
and the present paper is a step in an attempt to make this conjecture rigorous.
\end{abstract}

\tableofcontents

\section{Introduction}

Since its introduction in \cite{BTW87,Dhar1990}, the Abelian sandpile model attracted 
a great deal of attention; see the surveys 
\cite{Dhar-survey,Redig06,Holroyd08,sandpilemodels,Ruelle21}. 
The 2D model is particularly interesting due to its connection with conformal invariance 
\cite{PirouxRuelle04a,PirouxRuelle04b,JPR06,Ruelle21}. While some aspects of conformal 
invariance have been established rigorously, particularly pertaining to height-$0$ 
variables and dissipation 
\cite{Durre2009,KasselWu15,PonceletRuelle17,PonceletRuelle18,A-CR25},
others, such as multipoint correlations between heights different from $0$, remain without
rigorous proof, despite concrete conjectures \cite{Ruelle21}.

Motivated by computations of \cite{JPR06,PonceletRuelle17,PonceletRuelle18}, in this paper 
we re-examine the height-$0$ probability from the point of view of universality. Our main 
finding, explained further in the next section, is that there is \emph{partial universality:}
while the expected conformally invariant behaviour holds on many lattices with sufficient
symmetry, it fails on others that would be natural candidates. While writing up our results,
we learned of the recent work \cite{A-CR25} on the fermionic DGFF on $\Z^2$, that, 
among many other results, gives a proof of formula \eqref{e:asymp-formula} of our main 
theorem, in the special case of the square lattice and simply connected regions. 
Our approach in this paper is more probabilistic, and we stress that it works on 
lattices without a discrete complex structure. 

In the next section we recall the definition of the model, some background, and state 
our results.

\subsection{Background and main result}
\label{ssec:main-result}

We now briefly define the Abelian sandpile model; the reader will find a lot more background in the 
sources listed above. Consider a finite connected multigraph $G = (V \cup \{s\}, E)$ with 
distinguished vertex $s$ called the \textbf{sink}. Each non-sink vertex $x$ carries a number of 
indistinguishable particles (chips, grains of sand) $\eta(x)$. When $\eta(x) \ge \deg(x)$, 
the vertex $x$ is allowed to \textbf{topple}, and send one particle along each edge incident with it.
Any particles reaching the sink are removed from the system. A certain finite sequence of topplings 
will result in a \textbf{stable} configuration, that is with $\eta(x) < \deg(x)$ for all $x \in V$.
The \textbf{sandpile Markov chain} is the discrete time Markov chain with state space the set of stable 
configurations, and one-step transition mechanism defined as follows. At each step, a vertex $x \in V$ is
selected uniformly at random, a particle is added at $x$, and the configuration is stabilised via topplings,
if needed. (The order of topplings does not matter \cite{Dhar1990}.)

The case when $V$ is a finite subset of a Euclidean lattice is of particular interest. In this case, 
we typically collapse the complement of $V$ to a single vertex, $s$, and remove resulting loop-edges 
at $s$. We will refer to this as the \textbf{open boundary condition}, since particles can leave anywhere 
along the boundary, and write $\nu_V$ for the stationary 
distribution of the sandpile Markov chain to indicate the dependence on $V$, when the underlying lattice 
is fixed. It follows from results of \cite{AJ04,Jarai2014minimal} that when the underlying lattice is at least 
two-dimensional, then there exists a measure $\nu$ such that for any sequence of finite subsets
$V_1 \subset V_2 \subset \dots$ whose union is the entire lattice, $\nu_{V_n}$ converges weakly to
$\nu$ as $n \to \infty$. In particular we have 
\eqnsplst
{ p(i) 
  := \nu( \eta(o) = i )
  = \lim_{n \to \infty} \nu_{V_n} ( \eta(o) = i ), \quad i = 0, \dots, \deg(o)-1. }
Such limiting measures could be defined more generally, on certain infinite subgraphs 
of the lattice, for example, the discrete upper half plane in $\Z^2$, using ideas from
\cite{AJ04,Jarai2014minimal}. This is also possible with 
\textbf{partially closed boundary conditions}, where particles cannot cross a segment
of the boundary. While the existence of such limits is relevant for the motivating 
discussion in the next paragraph, we do not include a proof here, because for the 
present paper a weaker statement suffices; see Lemma \ref{lem:limit-unbounded}.

In order to motivate our results, we briefly describe a discovery of Jeng, Piroux and Ruelle \cite{JPR06}
from 2006. They performed calculations on the upper half plane on the 
square lattice and discovered an intriguing relationship between the height variables. One of their 
results can be described as follows. Let $\pop_m(i)$, $i=0,1,2,3$ denote the probability 
that the height-variable at a vertex at microscopic distance $m$ from the boundary 
equals $i$ with open boundary conditions.\footnote{Note that \cite{JPR06} labels the 
heights $1,2,3,4$. Here we follow the convention to label these $0,1,2,3$.} 
Let $\pcl_m(i)$ denote the same probability with closed boundary conditions.
Let $p(i)$ denote the probability that the height-variable at the origin 
on the full plane equals $i$. Then these probabilities satisfy an asymptotic relation of the form
\eqnspl{e:JPR-rel}
{ \pcl_m(i) 
	&= p(i) - \frac{1}{m^2} \left( a_i + b_i \log m \right) + o(m^{-2}), 
    \quad i = 0, 1, 2, 3, \quad \text{as $m \to \infty$}; \\
	\pop_m(i) 
	&= p(i) + \frac{1}{m^2} \left( a_i + \frac{b_i}{2} + b_i \log m \right) + o(m^{-2}), 
      \quad i = 0, 1, 2, 3, \text{as $m \to \infty$}, }
with explicit constants $a_i, b_i$, where $b_0 = 0$.
The values of the constants found in \cite{JPR06} imply that up to an error $o(m^{-2})$, the expressions 
$p^{\mathrm{op/cl}}_m(2) - p(2)$ and $p^{\mathrm{op/cl}}_m(3) - p(3)$ are \emph{linear combinations} of
$p^{\mathrm{op/cl}}_m(0) - p(0)$ and $p^{\mathrm{op/cl}}_m(1) - p(1)$.
Jeng, Piroux and Ruelle conjecture in \cite{JPR06} that this phenomenon
should hold in the scaling limit 'in any geometry and with any sort of boundary condition'.

In this paper we take a step towards a rigorous analysis of the above conjecture, 
by studying the scaling of the height $0$ probability in general regions and its 
universality. This generalises a result of \cite{BIP-93}.
Extending the results to the other height variables remains the 
subject of future work. 

In order to state our first main result, let us first clarify our assumptions on the underlying graphs we consider.
\begin{assumption}
\label{a:graph}\ \\
(i) Let $\cG = (\cV,\cE)$ be a multigraph (without loop-edges) with $\cV$ embedded 
into $\C \cong \R^2$ in a $\Z^2$-periodic way, that is, there exist two linearly independent vectors
$\mathbf{h}_1, \mathbf{h}_2 \in \mathbb{R}^2$ such the translations 
by $n_1 \mathbf{h}_1 + n_2 \mathbf{h}_2$, $(n_1,n_2) \in \mathbb{Z}^2$ leave $\cG$
invariant. We assume that $o \in \cV$, where $o$ denotes 
the origin in $\C$. We assume the embedding is locally finite, that is, every 
compact subset of $\C$ contains finitely many vertices. We think of the edges  
$e \in \cE$ represented by closed line segments $\overline{xy}$, where 
$x,y$ are the endpoints of $e$. \\
(ii) Assume that the embedding of $\cG$ is invariant under a rotation around 
every $v \in \cV$ by an angle $2 \pi/q$ for some integer $q = q_v \ge 3$.
\end{assumption}
Note that $\Z^2$-periodicity allows quasi-transitive graphs, such as examples obtained 
from the square or hexagonal lattice by adding extra edges in a way that keeps 
the rotational symmetry; see Figure \ref{fig:example-lattices}. 
It can be deduced that $q_v$ is restricted to the values
$\{ 3, 4, 6 \}$ by the assumptions (i)--(ii): a rotation by $2 \pi / q$, 
has trace $2 \cos(2 \pi / q)$, and this must be rational due to the $\Z^2$-periodicity.
This is only the case for $q \in \{ 1, 2, 3, 4, 6 \}$ \cite{L33}. Let $\mathcal{O}$ 
denote the set of orbits under the translation symmetry group. For a vertex $x \in \cV$ 
we denote by $[x] \in \mathcal{O}$ its orbit.
\begin{figure}
    \centering
    \includegraphics[width=0.4\linewidth]{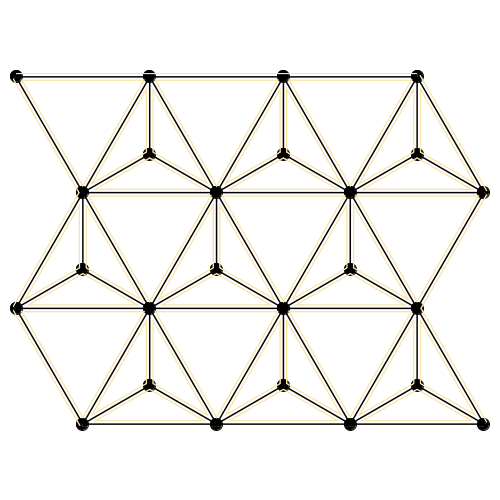} \qquad
    \includegraphics[width=0.4\linewidth]{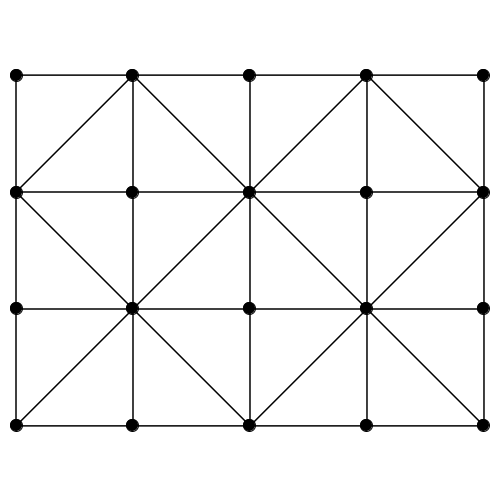}
    \caption{Examples of lattices satisfying Assumption \ref{a:graph} that are not
    transitive.}
    \label{fig:example-lattices}
\end{figure}

Next we state our assumptions on the regions we consider. 
\begin{assumption}
\label{a:U} 
Let $U \subsetneq \mathbb{C}$ be a region (connected open set) such that for each point
$a \in \partial_\infty U$ (considered in the Riemann sphere) there is a continuous simple
curve starting at $a$ and lying outside $U$ (i.e.~there is an injective 
$\gamma: [0,1] \to U^c$ with $\gamma(0) = a$.
It will be convenient for the proof to distinguish the regions we consider as follows. \ \\
(i) $U$ is bounded; \\
(ii) $U$ is unbounded with at least one unbounded boundary component.
\end{assumption}

\begin{remark}
\label{rem:bdry-U}
Our assumption on $U$ is slightly stronger than what is required to ensure that the 
Dirichlet problem in $U$ (with continuous boundary data) can be solved 
uniquely \cite[Corollary 10.4.17]{Conway-book-I}. We assume the stronger condition 
to facilitate a convergence theorem for discrete harmonic functions; 
see Theorem \ref{thm2_3}.
\end{remark}

We now define the lattice approximation of $U$ we consider.
Given a region $U \subsetneq \C$ satisfying Assumption \ref{a:U} and given $\eps > 0$, 
let $U_\eps := U \cap \eps \cV$. We turn 
$U_\eps$ into a graph by letting $E_\eps$ denote those edges of $\eps \cE$ that do not
have a point in common with $\partial U$. In order to define the boundary edges, consider
any $e \in \eps \vec{\cE}$ such that $e^- \in U_\eps$ and 
$\overline{e^- e^+} \cap \partial U \not= \emptyset$. We define 
\[ \partial_E U_\eps
   := \{ e : \text{$e^- \in U_\eps$ and 
      $\overline{e^- e^+} \cap \partial U \not= \emptyset$} \}. \]
Note: if both $e^-, e^+ \in U_\eps$ but $\overline{e^- e^+} \cap \partial U \not= \emptyset$,
then $e$ and $\overleftarrow{e}$ represent different elements of $\partial_E U_\eps$. (This 
will be the case for regions with a slit boundary piece.) We define the multigraph 
$\cU_\eps = (U_\eps \cup \{ s \}, E_\eps \cup \partial_E U_\eps)$, where each element
$e \in \partial_E U_\eps$ is now regarded as having endpoints $e^-$ and $s$.
The multigraph $\cU_\eps$ is non-empty for sufficiently small $\eps$, 
and connected (due to the sink $s$). 
For each $z \in U$, fix a lattice point $z_\eps \in U_\eps \cap [o]$ nearest to $z$.

Let $p_{U,\eps}(0;z)$ denote the probability that in the stationary sandpile model 
on the graph $\cU_\eps$ with sink $s$, the height at $z_\eps$ equals $0$. (In the case 
of unbounded $U$, refer to Lemma \ref{lem:limit-unbounded} for well-definedness.) 
It follows from existence of the potential kernel on $\cG$ (see Lemma \ref{lem:MD-trick})  
that the limit $p_\cG(0) = \lim_{\eps \to 0} p_{U,\eps}(0;z)$ exists. 
Let $\D$ denote the unit disc in $\C$.

\begin{theorem}
\label{thm:scaling-form}
Let $\cG$ satisfy Assumption \ref{a:graph}.  
Suppose that $U \subsetneq \C$ satisfies Assumption \ref{a:U}. \\
(i) There is a constant $c_\cG$, only depending on $\cG$, and a real-valued function $f_U(z)$, only depending on $U$, such that
\eqn{e:asymp-formula}
{ p_{U,\eps}(0;z)
  = p_\cG(0) + c_\cG f_U(z) \eps^2 + \mathcal{O}(\eps^3), \quad \text{as $\eps \to 0$.} }
(ii) The function $f_U(z)$ is conformally covariant with scale dimension $2$. \\
(iii) If $U$ is simply-connected, then
\eqn{e:conf-rad}
{ f_U(z)
  = 2 |\phi'_U(z)|^{2}, }
where $\phi_U : U \to \D$ is (any) conformal map such that $\phi_U(z) = o$. \\
(iv) We have $f_U > 0$ and $c_G > 0$. 
\end{theorem}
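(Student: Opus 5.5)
The plan is to use the Majumdar--Dhar representation of the height-$0$ event. By the burning bijection, $p_{U,\eps}(0;z)$ equals the ratio $\det(\Delta_{U_\eps} - B)/\det \Delta_{U_\eps}$. Here $\Delta_{U_\eps}$ is the graph Laplacian on $\cU_\eps$ with Dirichlet condition at $s$. The perturbation $B$ removes all edges at $z_\eps$ except one, and correspondingly lowers the degrees of the neighbours. Since $B$ is supported on the finite set $F$ consisting of $z_\eps$ and its lattice neighbours, we get
\[
p_{U,\eps}(0;z) = \det\bigl(I - G_{U_\eps} B\bigr)\big|_{F\times F},
\]
where $G_{U_\eps}$ is the Green function with killing at $s$. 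In the full-lattice case (Lemma~\ref{lem:MD-trick}), the same determinant written with the potential kernel $a$ of $\cG$ gives $p_\cG(0)$. One subtlety is that the whole-plane determinant depends only on differences $a(x)-a(y)$, $x,y\in F$, because the row sums of $B$ vanish. For unbounded $U$ the finite-volume objects are replaced by their limits, which Lemma~\ref{lem:limit-unbounded} provides.

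The key decomposition is
\[
G_{U_\eps}(x,y) = -a(x-y) + \tfrac{1}{\sigma}\,\mathbf{E}_x[a(X_\tau - y)]
\]
with a suitable normalisation constant $\sigma$; equivalently, $G = -a + h_\eps(x,y)$, where $h_\eps(\cdot,y)$ is discrete harmonic in $U_\eps$ with boundary values $a(\cdot - y)$. The function $h_\eps$ enters the determinant only through the second-order differences $\nabla_x\nabla_y h_\eps$ over pairs of neighbours of $z_\eps$. The reason is again that the rows and columns of $B$ sum to zero: constants and functions depending on only one variable drop out. Expanding the determinant to first order in these differences gives
\[
p_{U,\eps}(0;z) = p_\cG(0) + \sum_{e,e'} M_{e,e'}\,\nabla_e\nabla_{e'} h_\eps + O\bigl(|\nabla\nabla h_\eps|^2\bigr),
\]
where $M$ depends only on $a$ restricted to $F$, and hence only on $\cG$.

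Next, I relate $h_\eps$ to the continuum. Writing $a(x) = \frac{1}{\pi\kappa}\log|x| + c + O(|x|^{-2})$, we get $h_\eps(x,y) = \frac{1}{\pi\kappa}\log(1/\eps) + \frac{1}{\pi\kappa}H_U(x,y) + \text{errors}$. Here $H_U$ is the regular part of the continuum Dirichlet Green function, namely the harmonic extension of $\log|\cdot - y|$. Using the convergence theorem for discrete harmonic functions (Theorem~\ref{thm2_3}), together with interior derivative estimates for discrete harmonic functions, the mixed second differences satisfy
\[
\nabla_e\nabla_{e'} h_\eps = \eps^2\,\frac{1}{\pi\kappa}\, e^{\!\top} D_xD_y H_U(z,z)\, e' + O(\eps^3).
\]
Achieving the $O(\eps^3)$ error rather than $o(\eps^2)$ is the main obstacle. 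I would handle it by writing $h_\eps(\cdot, y)$ as the continuum harmonic function plus a correction that is discrete harmonic. The boundary values of this correction are controlled by the $O(|x|^{-2})$ expansion of $a$, giving $O(\eps^2)$ uniformly. Interior difference estimates then yield third-order bounds, and the lattice-dependent terms at order $\eps^2$ cancel by the symmetry below.

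The rotational symmetry in Assumption~\ref{a:graph}(ii) is what produces universality. The matrix $M$ and the edge vectors at $z_\eps$ are invariant under rotation by $2\pi/q$ with $q\ge 3$, so the bilinear form $\sum_{e,e'} M_{e,e'}\, e^{\!\top} A\, e'$ is rotation invariant. It therefore reduces to a multiple of $\operatorname{tr}A$ plus a multiple of the antisymmetric part of $A$. Since $D_xD_yH_U$ is symmetric at the diagonal, this gives $c\,\Delta$-type contractions, namely $c_\cG\,\partial_x\cdot\partial_y H_U(z,z)$. Accordingly I set $f_U(z) = \partial_x\cdot\partial_y H_U(x,y)|_{x=y=z}$, up to normalisation. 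This proves (i).

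Conformal covariance (ii) follows from the transformation rule of the Green function, $H_U(x,y) = H_{V}(\phi x,\phi y) + \log|\phi(x)-\phi(y)| - \log|x-y|$. The correction terms cancel in the mixed derivative at the diagonal, by a Schwarzian-type computation, leaving the factor $|\phi'(z)|^2$. For (iii), in $\D$ we have $H_\D(x,y) = \log|1 - x\bar y|$, whose mixed gradient contraction at $0$ equals $2$ in absolute value. Combining this with (ii) gives $2|\phi_U'(z)|^2$. For (iv), positivity of $f_U$ comes from monotonicity: Green functions increase with the domain, and the disc formula together with Schwarz's lemma shows $f_U>0$ in the simply connected case. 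In general one compares via Brownian motion, writing $f_U$ as an integral against the harmonic measure. Finally, $c_\cG>0$ follows from comparing with the half-plane or any fixed region. There $p_{U,\eps}(0)>p_\cG(0)$, since open boundaries increase the probability of height $0$: deleting edges to the sink raises the spanning-tree ratio, which can be verified via Rayleigh monotonicity of the relevant effective conductances.
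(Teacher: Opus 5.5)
Your treatment of (i)--(iii) follows the paper's route. You use the Majumdar--Dhar determinant, the decomposition $G=-A+{}$harmonic correction (Lemma~\ref{lem:finite-Green}), the expansion of the transfer-current entries via Proposition~\ref{prop:A-asymp} and Theorem~\ref{thm2_3}, a symmetry reduction, and finally the Green-function transformation rule and the disc computation.

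The symmetry step, however, is misjustified. Once you retain a fixed edge $g$ at $z_\eps$, your coefficient matrix $M$ is indexed by $\cN\setminus\{g\}$. The rotation $O$ does not preserve this set, so $M$ is not rotation invariant, and the reduction to $\operatorname{tr}A$ cannot be read off from it. The conclusion is rescued because $\det(I-Y^{(g)}_\eps)$ does not depend on $g$. Averaging over $g,Og,\dots,O^{q-1}g$, as the paper does, permutes the cofactors cyclically and replaces $J_{e,f}$ by $\frac1q\sum_r J_{O^re,O^rf}=\frac12\langle e,f\rangle(J_{11}+J_{22})$.

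For unbounded $U$, Lemma~\ref{lem:limit-unbounded} only gives well-definedness. The convergence results you rely on are for bounded domains, so you still need to approximate by $U_R$ with the $\mathcal{O}(\eps^3)$ error uniform in $R$.

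The genuine gap is in (iv). A strict inequality $p_{U,\eps}(0;z)>p_\cG(0)$ at every $\eps$ only gives, after dividing by $\eps^2$ and letting $\eps\to0$, $c_\cG f_U(z)\ge 0$, i.e.\ $c_\cG\ge0$. It cannot exclude that the difference is $\mathcal{O}(\eps^3)$. (Also, the open boundary acts by wiring the exterior into the sink, not by deleting edges.) You need a quantitative bound $p_{U,\eps}(0;z)\ge p_\cG(0)+c\eps^2$ with $c>0$. The paper (Lemma~\ref{lem:asymp-diff}) obtains it as follows:
\begin{itemize}
\item it writes $p_{U,\eps}(0;z)=\prod_{i=1}^{N-1}\mathbf{P}[e_i\notin T_{U_\eps}\mid e_1,\dots,e_{i-1}\notin T_{U_\eps}]$;
\item it bounds the factors with $i\ge2$ below by their full-lattice counterparts, using monotonicity in the graph;
\item for $i=1$ it uses the rotation-averaged single-edge asymptotics, whose $\eps^2$ coefficient is a positive multiple of $|\hat e_1|^2 f_U(z)$, to choose a rotation of $e_1$ that gains order $\eps^2$.
\end{itemize}

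That last step requires $f_U(z)>0$, and your argument for this is incomplete when $U$ is not simply connected. The pointwise inequality $g_U\le g_V$ does not order mixed second derivatives on the diagonal, and the harmonic-measure integral for $f_U$ has no definite sign. What is needed is the monotonicity $f_U(z)\ge f_V(z)$ for $U\subset V$. This holds, for instance, because $g_V-g_U$ is a positive semidefinite kernel on $U$, so its trace-contracted mixed derivative on the diagonal is nonnegative. Alternatively, $f_U(z)$ is a positive multiple of the Bergman kernel $K_U(z,z)$. Or, as in Lemma~\ref{lem:monotone}, it follows from Rayleigh monotonicity of single-edge UST probabilities combined with the averaged expansion. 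Then compare $U$ with a disc containing it, or, in the unbounded case, with $\C\setminus\gamma$ for a simple curve $\gamma\subset U^c$ running to $\infty$.
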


\begin{remark} \ \\
(a) The quantity $|\phi'_U(z)|^{-1} =: \rad(U,z)$ is known as the \textbf{conformal radius} 
of $U$ viewed from $z$. \textbf{Conformal covariance with scale dimension $\alpha$} is the
statement that if $\psi : U \to V$ is a conformal map then 
$f_U(z) = |\psi'(z)|^{\alpha} f_V(\psi(z))$. As shown in \cite{A-CR25}, $f_U$ can also be 
expressed in terms of the Green function $g_U(z,w)$ of $U$. Let 
$k_U(z,w) = g_U(z,w) + \log|z-w|$. Then 
\[ f_U(z)
   = \bar{\partial}^{(1)} \partial^{(2)} k_U(z,z)
   = \partial^{(1)}_x \partial^{(2)}_x k_U(z,z) 
     + \partial^{(1)}_y \partial^{(2)}_y k_U(z,z), \]
where $z = x + i y$, and $\partial^{(1)}_x$, etc.~refer to partial derivative with respect to
$x$ in the first variable, etc. This formula remains valid for non-simply connected $U$.\\
(b) When $z$ is not restricted by the condition $[z]=[o]$, and $\cG$ is only quasi-transitive
(but satisfies Assumption \ref{a:graph}), the same proof yields the modified statement where $p_\cG(0)$ and $c_\cG$ also depend on the orbit of $z_\eps$ under the automorphism
group of $\cG$, but are otherwise independent of $z$. \\
(c) The error term $\mathcal{O}(\eps^3)$ depends on $z$, but is locally uniform. \\
(d) In Section \ref{sec:ex-and-counter-ex} we compute $c_\cG$ in several examples. 
\end{remark}

Our second main contribution is two examples that show that the universality seen in 
Theorem \ref{thm:scaling-form} is only \emph{partial}: even when the scaling limit of random walk 
on the lattice is isotropic, the conformal radius formula is not guaranteed to hold
without some sort of additional symmetry. As the first example, consider the 
modification of the square lattice where each horizontal edge is doubled (replaced 
by two parallel edges). This lattice is only $180$-degree symmetric. After an 
appropriate stretch in the vertical direction, the scaling limit of simple random 
walk on the lattice becomes isotropic. In particular, the potential kernel on the 
stretched lattice is asymptotically isotropic at large distances. Despite this, the 
limiting function $f_U$ on this lattice is different from the cases covered by 
Theorem \ref{thm:scaling-form}; see Section \ref{ssec:180}. Interestingly, by 
\emph{changing} the vertical scaling, that is making the random walk \emph{non-isotropic},
one can recover conformal invariance. We find this surprising, since under the
Abelian sandpile the dynamics the individual particles follow random walk trajectories.

Our second counterexample concerns isoradial graphs. Works of Kenyon \cite{K02} and 
Chelkak and Smirnov \cite{CS11} showed that on critically weighted isoradial graphs the 
potential kernel is asymptotically isotropic, and the scaling limit of the weighted random walk 
is a time-change of planar Brownian motion. We show in Section \ref{ssec:isorad}, 
using the residue formulas of Kenyon \cite{K02}, that with the natural generalisation 
of the height-$0$ probability to weighted graphs, again the simple form 
of the function $f_U$ fails on general isoradial graphs.

The observation that the Abelian sandpile shows non-universal (that is, lattice dependent)
behaviour is not new. The fractal patterns seen in the scaling limit of the single source
sandpile are lattice dependent \cite{LPS17}. See also \cite[Section 10]{LS24} that contrasts 
properties of the Activated Random Walk with those of the Abelian sandpile. An interesting 
feature of our results, however, is that their lattice dependence is only partial.

We note here some questions that remain open. \\
1. Can the results be extended to the other height variables, in a way 
that allows a check of the conjecture of \cite{JPR06}? \\
2. Is there a precise characterisation, or at least a more general sufficient condition 
on the local symmetry required, for the conclusion of Theorem \ref{thm:scaling-form} to hold?
Is there an appropriate generalisation to lattices where it fails? \\
3. Can the results be extended to other boundary conditions, for example when part of the boundary is open and part of it is closed? \\

\emph{Outline of the paper.} 
The starting point for our proof is Majumdar and Dhar's method \cite{Majumdar1992,Dhar1991} 
for representing the full plane probability that we recall in Section \ref{sec:prelim}. 
We analyse the resulting expression via the asymptotic 
expansion of the potential kernel due to Fukai and Uchiyama \cite{FU96} in 
Section \ref{sec:pot-kern}.
The main result for bounded regions will then be achieved via an averaging over 
rotations of the graphs in Section \ref{sec:proof-main}. In Section \ref{sec:unbounded} 
we extend the result to unbounded regions. In Section \ref{sec:positive} we prove positivity
of $c_\cG$ and $f_U$. Finally, in Section \ref{sec:ex-and-counter-ex} 
we discuss examples and counterexamples. Some extensions we need of well-known results 
are given in the appendices.

\section{Preliminaries}
\label{sec:prelim}

\subsection{The finite volume Green function}
\label{ssec:finite-Green}
We denote by 
\eqnst{
  \Delta_{vw}
  = \begin{cases}
    \deg(v) & \text{if $w = v$;} \\
    - a_{vw} & \text{if $w \not= v$,}
  \end{cases} }
the graph Laplacian of $\cG$, where $\deg(v)$ denotes the degree of $v$ and $a_{vw}$ 
is the number of edges between $v$ and $w$ in $\cG$. When $a_{vw} > 0$, we write $v \sim w$.
There is a slight subtlety in defining the restriction of $\Delta$ to finite subgraphs
stemming from how we defined boundaries of sets in Section \ref{ssec:main-result}. 
We will have to work with pairs $(\Lambda, \partial \Lambda)$ of the form
$(\eps^{-1} U_\eps, \eps^{-1} \partial_E U_\eps)$. We will refer to such pairs as a
\emph{graph with boundary}. When $\Lambda \subset \cV$ is 
finite, we denote by $\left( \Delta_\Lambda \right)_{vw}$, $v,w \in \Lambda$, the 
\textbf{Dirichlet Laplacian}, that is, the matrix indexed by vertices in $\Lambda$ 
having entries
\[ \left( \Delta_\Lambda \right)_{vw}
   = \begin{cases}
     \deg(v) & \text{if $w=v$;} \\[2ex]
     - a_{vw} & \parbox{5.5cm}{$v \sim w$ and for the edge(s) $e$ with endpoints $v,w$ 
        we have $e, \overleftarrow{e} \not\in \partial \Lambda$;} \\[2ex]
     0    & \parbox{5.5cm}{$v \sim w$ and for the edge(s) $e$ with endpoints $v,w$ 
        we have $e, \overleftarrow{e} \in \partial \Lambda$.}
   \end{cases} \]
(Note: the dependence of the matrix $\Delta_\Lambda$ on $\partial \Lambda$ is suppressed.)

We let $G_\Lambda$ denote the finite volume Green function, defined as the inverse:
\eqnst{
  G_\Lambda(v,w) 
  = \left( \Delta_\Lambda^{-1} \right)_{vw}; }
which is a symmetric matrix. We have
\eqnst{
  G_\Lambda(v,w) 
  = \frac{1}{\deg(w)} \mathbf{E}^v \left[ \sum_{n=0}^{\tau_\Lambda-1} \mathbf{1}_{S_n = w} \right], }
where $(S_n)_{n \ge 0}$ is a simple random walk on $\cG$, and $\tau_\Lambda$ is the first 
exit time. It will sometimes be necessary, in the case of parallel edges, to keep track 
of which edge the simple random walk crosses: let $E_1, E_2, \dots \in \mathcal{E}$ be 
the sequence of edges crossed by the walk, where $E_n$ has endpoints $S_{n-1}$ and $S_n$. 
We then have
\[ \tau_\Lambda 
   = \inf \{ n \ge 0 : \text{$S_n \not\in \Lambda$ or 
     $E_n \in \partial \Lambda$} \}. \]
We interpret $G_\Lambda(v,w)$ as $0$, when at least one of $v, w$ is in 
$\cV \setminus \Lambda$.

The potential kernel of $\cG$ is defined by
\eqnst{
  A(w,v) 
  = \lim_{N \to \infty} \sum_{n = 0}^N \left[ \frac{p_n(w,w)}{\deg(w)} - \frac{p_n(w,v)}{\deg(v)} \right], }
where $p_n(w,v) = \mathbf{P}^w [ S_n = v ]$.
The limit exists, for example, by \cite{KU08} and satisfies
\eqnspl{e:A-apriori}{
  \Delta A (w,\cdot)
  &= - \delta_w(\cdot), }
where 
\[ \delta_w(v) 
   = \begin{cases}
     1 & \text{if $v = w$;} \\
     0 & \text{if $v \not= w$.}
   \end{cases} \]
We also have that
\eqn{e:A-nonneg}{
  A(w,v)
  \ge 0, \quad v, w \in \cV. }

The following well-known lemma can be found, for example, in \cite[Proposition 4.6.2]{LL2010}. 
Note that although the notion of lattice used in \cite{LL2010} is 
different from what we assume, the same proof works.

\begin{lemma}
\label{lem:finite-Green}
For a finite graph with boundary $(\Lambda, \partial \Lambda)$, we have 
\eqnst{
  G_\Lambda(v,w)
  = -A(v,w) + \sum_{e = (e^-,e^+) \in \partial \Lambda} H_\Lambda(v,e) A(e^+,w), 
    \quad v, w \in \Lambda, }
where $H_\Lambda(v,e) = \mathbf{P}^v [ E_{\tau_\Lambda} = e ]$.
\end{lemma}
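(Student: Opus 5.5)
The statement to prove is Lemma \ref{lem:finite-Green}. The plan is to verify that the right-hand side, viewed as a function of $v$ with $w$ fixed, solves the Dirichlet problem that characterises $G_\Lambda(\cdot,w)$. Uniqueness for this problem on a finite graph with boundary then gives equality.

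Fix $w \in \Lambda$ and set
\[ F(v) := -A(v,w) + \sum_{e \in \partial \Lambda} H_\Lambda(v,e) A(e^+,w), \quad v \in \Lambda. \]
Note that $A$ is not symmetric in general, since the degrees differ. I would therefore first check which variable the identity \eqref{e:A-apriori} controls. From the definition, $\deg(v)^{-1}\sum_n p_n(w,v) = \deg(w)^{-1}\sum_n p_n(v,w)$ by reversibility. Consequently $A(w,v) - A(v,w) = \sum_n[\,p_n(w,w)/\deg(w) - p_n(v,v)/\deg(v)\,]$, which is a function of $v$ plus a function of $w$.

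It is cleaner to work directly with $v \mapsto A(v,w)$. Here $\sum_n [p_n(v,v)/\deg(v) - p_n(v,w)/\deg(w)]$ is not obviously harmonic in $v$. I would instead use the standard martingale route.

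\textbf{Step 1: harmonicity.} Take $g(v) := \deg(w)^{-1}\sum_n [p_n(w,w) - p_n(v,w)]$. This is the version that appears naturally via the last-exit form $p_n(v,w)/\deg(w) = p_n(w,v)/\deg(v)$, and it equals $A(w,v)$ after using $p_n(v,w)/\deg(w)=p_n(w,v)/\deg(v)$. A one-step decomposition $p_{n+1}(v,w) = \sum_u P(v,u) p_n(u,w)$ gives $\sum_u P(v,u) g(u) - g(v) = \delta_w(v)/\deg(w)$, i.e.\ $(\Delta g)(v) = -\delta_w(v)$ up to the symmetric convention of the paper. This matches \eqref{e:A-apriori}, and the convergence of the series is justified as in \cite{KU08}.

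I will therefore read the lemma with $A(v,w)$ in the form for which $v \mapsto A(v,w)$ satisfies $\Delta A(\cdot,w) = -\delta_w$. If necessary, I would note the symmetric correction above, since terms depending only on $w$ cancel between the two pieces of $F$ because $\sum_e H_\Lambda(v,e) = 1$.

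\textbf{Step 2: the Dirichlet problem.} The key point is that the walk exits a.s.\ since $\Lambda$ is finite. I would consider $M_n = A(S_{n\wedge\tau_\Lambda}, w) + \deg(w)^{-1}\#\{k < n\wedge\tau_\Lambda : S_k = w\}$, which is a martingale by Step 1. This uses that $\Delta$ acting in the interior only involves actual edges, with the walk stopped upon crossing an edge of $\partial\Lambda$. The martingale is bounded up to time $\tau_\Lambda$, and $\tau_\Lambda$ has exponential tails. Optional stopping therefore gives
\[ A(v,w) = \sum_{e\in\partial\Lambda} H_\Lambda(v,e) A(e^+,w) - G_\Lambda(v,w). \]
Here $S_{\tau_\Lambda} = E_{\tau_\Lambda}^+$, and the expected number of visits to $w$ divided by $\deg(w)$ is exactly $G_\Lambda(v,w)$ by the displayed random-walk formula. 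Rearranging yields the lemma.

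The main obstacle is purely bookkeeping. One must ensure that the variable in which $A$ is harmonic matches the argument order in the statement. One must also handle the multigraph boundary correctly: when both endpoints lie in $\Lambda$ but the edge lies in $\partial\Lambda$, $e^+$ is still a genuine lattice point where $A(e^+,w)$ is evaluated, which is exactly what optional stopping produces.
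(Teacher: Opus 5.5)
Your core argument (the one-step decomposition giving $(P-I)g=\delta_w/\deg(w)$ for $g=A(w,\cdot)$, then optional stopping on the finite set $\Lambda$) is the standard proof behind \cite[Proposition 4.6.2]{LL2010}, to which the paper simply defers. With $A$ exactly as defined in the paper, it correctly yields
\[ G_\Lambda(v,w) = -A(w,v) + \sum_{e\in\partial\Lambda} H_\Lambda(v,e)\,A(w,e^+), \qquad v,w\in\Lambda. \]
There are two small slips. First, because $(P-I)g=\delta_w/\deg(w)$, the martingale is $g(S_{n\wedge\tau_\Lambda})-\deg(w)^{-1}\#\{k<n\wedge\tau_\Lambda : S_k=w\}$, with a minus sign; your final display matches this, not the $+$ you wrote. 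Second, this process is not bounded. It is, however, dominated by $\max|g|+\tau_\Lambda/\deg(w)$, where the maximum is over the finitely many vertices the stopped walk can reach, and this is integrable, so optional stopping still applies.

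The genuine gap is the reconciliation with the statement as written, which you flagged as the main obstacle but did not resolve. Put $\kappa(x):=A(x,o)-A(o,x)=\lim_N\sum_{n\le N}[p_n(x,x)/\deg(x)-p_n(o,o)/\deg(o)]$, so that $A(v,w)=A(w,v)+\kappa(v)-\kappa(w)$. When you substitute this into the stated right-hand side, the $\kappa(w)$ terms cancel because $\sum_e H_\Lambda(v,e)=1$, as you say. The $v$-dependent terms do not cancel: the stated right-hand side equals $G_\Lambda(v,w)+\sum_e H_\Lambda(v,e)\kappa(e^+)-\kappa(v)$. Taking $\Lambda=\{v\}$ shows this residual vanishes for all $\Lambda$ only if $\kappa$ is harmonic. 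Since $\kappa$ is periodic, hence bounded, and $\mathcal{G}$ is recurrent, that forces $\kappa$ constant, i.e.\ $A$ symmetric.

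Symmetry holds for vertex-transitive $\mathcal{G}$, and the paper tacitly assumes it (its appendix uses $A(w,v)=A(v,w)$). It fails, however, for some lattices allowed by Assumption~\ref{a:graph}. Take the square lattice with a vertex added at each face centre and joined to the four corners. Harmonicity of $A(b,\cdot)$ at a centre $b$ gives $A(b,a)=1/4$ for its corners $a$. Harmonicity of $A(a,\cdot)$ at $a$ gives $4A(a,a')+4A(a,b)=1$, with $A(a,a')>0$ for a lattice neighbour $a'$, so $A(a,b)<1/4$. For $\Lambda=\{b\}$ the stated formula therefore gives $A(a,b)$, not $G_\Lambda(b,b)=1/4$.

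So you should not claim the discrepancy cancels. Either state and prove the lemma in the form displayed above, or add symmetry of $A$ as a hypothesis; under that hypothesis your argument is complete.
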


\subsection{Majumdar and Dhar's method}
\label{ssec:MD-trick}

In this section we briefly recall the modification trick of Majumdar and Dhar \cite{Dhar1991,Majumdar1992} 
in a form that will be convenient for our purposes. Note that from 
Lemma \ref{lem:MD-trick} below we will only 
need to use part (ii); see \cite{Jarai2014minimal} for this version of the result.

Let $(\Lambda, \partial \Lambda)$ be a finite graph with boundary, 
and assume $o \in \Lambda$. Let $\cG_\Lambda$ denote 
(suppressing the dependence on $\partial \Lambda$) 
the wired graph induced by $(\Lambda, \partial \Lambda)$, that is, 
the graph with vertex set $\Lambda \cup \{ s \}$ obtained by identifying the tails
of boundary edges to a single vertex, denoted $s$ (the 'sink'), and removing loops 
created at $s$. We write $\mathsf{UST}_{\cG_\Lambda}$ for the probability measure 
on spanning subgraphs of $\cG_\Lambda$ that gives equal weight to all spanning trees 
and no weight to other subgraphs. Let $g$ be any edge incident with $o$, with other 
endpoint $z$. Let $\cN^{(g)}$ denote the set
of edges incident with $o$, other than $g$. Let the graph $\cG^{(g)}_\Lambda$ 
be obtained from $\cG_\Lambda$ by deleting all edges in $\cN^{(g)}$. 

The transfer-current matrix $Y_\Lambda$ can be defined by giving each edge of $\cG_\Lambda$ an 
arbitrary orientation, and setting 
\eqnsplst{
  Y_\Lambda(e,f)
  &:= \partial^1_e \partial^2_f G_\Lambda(x,y) 
  := \big[ G_\Lambda(u,v) - G_\Lambda(u,y) - G_\Lambda(x,v) + G_\Lambda(x,y) \big], \\ 
  &\qquad\quad \text{where $e_- = x$, $e_+ = u$, $f_- = y$, $f_+ = v$.} }
(See \cite{LyonsTrees} for background on the transfer-current matrix.)

\begin{lemma}[See \cite{Dhar1991,Majumdar1992,Jarai2014minimal}.]
\label{lem:MD-trick} \ \\
(i) We have 
\eqn{e:MD-trick}
{ p_\Lambda(0; o)
  = \det ( I + G_\Lambda B ), }
where $B$ is the matrix defined by $\Delta_{\cG^{(g)}_\Lambda} = \Delta_\Lambda + B$. \\
(ii) The above can also be written in the form
\eqn{e:MD-transf-curr}
{ p_\Lambda(0; o)
  = \det ( I - Y^{(g)}_\Lambda ), }
where $Y^{(g)}_\Lambda$ is the submatrix of $Y_\Lambda$ obtained by deleting the rows and columns 
corresponding to all edges not incident with $o$, as well as the row and column corresponding
to $g$.
\end{lemma}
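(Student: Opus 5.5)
The plan is to prove (i) with the burning bijection and then get (ii) from (i) by linear algebra, cross-checked with the transfer-current theorem. The combinatorial identification in (i) is the main obstacle.

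\emph{Step 1: reduce (i) to a count.} By Dhar's theory \cite{Dhar1990}, $\nu_\Lambda$ is uniform on recurrent configurations of $\cG_\Lambda$. Their number is $\det \Delta_\Lambda$, which is also the number of spanning trees of $\cG_\Lambda$ by the matrix-tree theorem. So it suffices to show that the recurrent configurations with $\eta(o)=0$ are equinumerous with the spanning trees of $\cG^{(g)}_\Lambda$, of which there are $\det \Delta_{\cG^{(g)}_\Lambda}$. Then
\[ p_\Lambda(0;o) = \frac{\det(\Delta_\Lambda + B)}{\det \Delta_\Lambda} = \det(I + G_\Lambda B). \]

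\emph{Step 2: the bijection (main obstacle).} Following \cite{Majumdar1992}, I would send a configuration $\eta$ with $\eta(o)=0$ to a configuration $\eta'$ on $\cG^{(g)}_\Lambda$, defined by
\[ \eta'(o)=0, \qquad \eta'(w) = \eta(w) - a_{ow} \ \text{ for } w \sim o,\ w \ne z, \qquad \eta'(w)=\eta(w) \ \text{ otherwise}. \]
The edge $g$ is still present, so for $w=z$ one removes only the $a_{oz}-1$ edges of $\cN^{(g)}$ between $o$ and $z$. The claim is that $\eta$ is recurrent on $\cG_\Lambda$ if and only if $\eta'$ is recurrent on $\cG^{(g)}_\Lambda$.

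To prove this I would run the burning algorithm on both graphs and compare step by step. In $\cG_\Lambda$ a vertex with height $0$ can burn only after all its neighbours have burnt. Hence $o$ burns last among its neighbourhood, and until then it acts exactly like a vertex whose edges in $\cN^{(g)}$ are absent, which is the degree shift in $\eta'$. The delicate points are the neighbour $z$ joined through $g$, parallel edges, and edges of $\cN^{(g)}$ that are boundary edges to $s$. If the direct check becomes messy, I would instead argue through the burning-tree bijection: trees of $\cG_\Lambda$ coming from configurations with $\eta(o)=0$ are exactly those in which $o$ is a leaf attached by $g$. These trees correspond bijectively to the spanning trees of $\cG^{(g)}_\Lambda$, since $o$ has degree one there.

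\emph{Step 3: deduce (ii) from (i).} Deleting an edge $f$ changes the Laplacian by $-b_f b_f^{T}$, where $b_f$ is the signed incidence vector of $f$ restricted to $\Lambda$; a boundary edge has a single nonzero entry. Hence $B = -MM^{T}$, where the columns of $M$ are $b_f$ for $f \in \cN^{(g)}$. By Sylvester's identity $\det(I+XY)=\det(I+YX)$,
\[ \det(I + G_\Lambda B) = \det(I - M^{T} G_\Lambda M), \]
and $(M^{T}G_\Lambda M)_{ef} = \partial^1_e\partial^2_f G_\Lambda = Y_\Lambda(e,f)$, with the convention $G_\Lambda(\cdot,s)=0$. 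This gives \eqref{e:MD-transf-curr}.

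As a cross-check, $\det \Delta_{\cG^{(g)}_\Lambda}/\det\Delta_\Lambda = \mathsf{UST}_{\cG_\Lambda}(\text{no edge of } \cN^{(g)} \text{ in the tree})$. By the transfer-current theorem this probability equals $\det(I - Y^{(g)}_\Lambda)$, confirming the formula independently of the algebra.
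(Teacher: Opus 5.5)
Your proposal is correct. Part (i) follows the paper's route. Both reduce it to $p_\Lambda(0;o)=|\cR(\cG^{(g)}_\Lambda)|/|\cR(\cG_\Lambda)|=\det\Delta_{\cG^{(g)}_\Lambda}/\det\Delta_\Lambda$ via the Majumdar--Dhar burning argument. The paper only cites that bijection, whereas you make the map $\eta\mapsto\eta'$ explicit. Your direct comparison closes up, and the points you call delicate are easy:
\begin{itemize}
\item Recurrence forces $\eta(w)\ge a_{ow}$ for every neighbour $w$, since $w$ must burn while $o$ is still unburnt. So $\eta'$ is a legitimate stable configuration.
\item Every recurrent configuration of $\cG^{(g)}_\Lambda$ has height $0$ at $o$, because $o$ has degree $1$ there. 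This gives the inverse map.
\item The two burning sequences differ only in that $o$ burns right after $z$ rather than after all its neighbours. This affects no other vertex.
\end{itemize}

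For part (ii) your main route is genuinely different. The paper uses the identity $p_\Lambda(0;o)=\mathsf{UST}_{\cG_\Lambda}[\text{no edge of }\cN^{(g)}\text{ present}]$ together with the Burton--Pemantle Transfer-Current Theorem, which is exactly your ``cross-check''. You instead derive $\det(I+G_\Lambda B)=\det(I-Y^{(g)}_\Lambda)$ from $B=-MM^{T}$ and Sylvester's identity. Your version is self-contained linear algebra; in effect it proves the deletion case of the transfer-current theorem. The paper's version records the spanning-tree interpretation of $p_\Lambda(0;o)$. That interpretation is what Section~\ref{sec:positive} later uses, through UST monotonicity and the product formula \eqref{e:T-prob-prod}.

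You should drop the fallback sentence in Step 2. With the standard burning bijection it is false that configurations with $\eta(o)=0$ produce exactly the trees in which $o$ is a leaf attached by $g$. In those trees $o$ is indeed a leaf, since all its neighbours burn before it. However, its parent is chosen among the neighbours that burnt in the step just before $o$. The vertex $z$ is not among them whenever $z$ burns strictly earlier than some other neighbour of $o$. Only the cardinalities agree: both equal the number of spanning trees of $\cG_\Lambda$ with $o$ deleted. Proving even that equality still requires your direct comparison.
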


\begin{proof}[Sketch of proof.]
Using the burning bijection of Majumdar and Dhar \cite{Majumdar1992}, we have that 
\eqnspl{e:UST-prob}{
  p_\Lambda(0; o)
  &= \frac{|\cR(\cG^{(g)}_\Lambda)|}{|\cR(\cG_\Lambda)|}
  = \mathsf{UST}_{\cG_\Lambda} [ \text{the edges in $\cN^{(g)}$ are not present} ] \\
  &= \frac{1}{\deg(o)} \mathsf{UST}_{\cG_\Lambda} [ \text{$o$ is a leaf} ]. }
The second expression equals $\det(\Delta_{\cG^{(g)}_\Lambda})/\det(\Delta_\Lambda)$,
yielding part (i). The third expression can be expressed using the Transfer-Current 
Theorem of Burton and Pemantle \cite{Burton1993}, \cite[Exercise 4.41]{LyonsTrees} as
\eqnst{
  \det( I - Y^{(g)}_\Lambda ). }
\end{proof}

\subsection{Potential kernel asymptotics}
\label{sec:pot-kern}

The following proposition plays a fundamental role in our computations. As explained below, 
it readily follows from \cite[Eqn.~(15)]{KU08} of Kazami and Uchiyama. We remark that, 
exploiting the symmetry assumptions we made, the proposition can also be derived directly 
from \cite[Theorem 2]{FU96} of Fukai and Uchiyama, without relying on the general result 
of \cite[Theorem 2]{U07} on Markov additive processes. We provide such a derivation in
Appendix \ref{sec:appendix}.

Before stating the proposition, we need to introduce some notation. 
Let $\mathbf{h_1}, \mathbf{h_2} \in \mathbb{R}^2$
be linearly independent periodicity vectors of $\cV$, and let $M$ be the $2 \times 2$ matrix 
that has $\mathbf{h_1}, \mathbf{h_2}$ as its columns. Let $T$ denote a fixed 
(necessarily finite) fundamental 
set of the sublattice generated by $\mathbf{h_1}, \mathbf{h_2}$, i.e.~a set of 
representatives of the orbits of the automorphisms of $\cV$ generated by the translations 
by the vectors $\mathbf{h_1}$ and $\mathbf{h_2}$. The random walk on $\cV$ induces a
Markov chain on $T$. Let 
\[ \mu(u) 
   = \frac{\deg(u)}{\sum_{v \in T} \deg(v)}, \quad u \in \cV, \]
denote its invariant distribution.
We will write $[u]$ for the equivalence class of $u$ according to the sublattice spanned by
$\mathbf{h_1}, \mathbf{h_2}$. 

Define the $2 \times 2$ symmetric matrix $Q$ and the quadratic form $Q(\theta)$ via
\[ \theta \cdot Q \theta
   := Q(\theta)
   := \sum_{u \in T} \mu(u)
      \mathbf{E}^u \left[ (\theta \cdot (S_1 - S_0))^2 \right]
   = \frac{|\theta|^2}{2} \sum_{u \in T} \mu(u) \E^u \left[ |S_1-S_0|^2 \right], \]
where in the last equality we used rotational symmetry. We also define
\[ \sigma^2
   := (\det Q)^{1/2}
   = \frac{1}{2} \sum_{u \in T} \frac{\deg(u)}{\sum_{s \in T} \deg(s)} 
     \frac{1}{\deg(u)} \sum_{y \sim u} |y - u|^2
   = \frac{\sum_{u \in T} \sum_{y \sim u} |y - u|^2}{2 Z_T}, \]
where, for the last equality, we defined
\[ Z_T
   := \sum_{s \in T} \deg(s). \]
   
\begin{proposition}[{See \cite[Eqn.~(15)]{KU08}}]
\label{prop:A-asymp}
Under Assumption \ref{a:graph}, there exist $c_0 = c_0(\cG)$ and $C_0 = C_0(\cG)$ 
and a smooth function $U_2$ on the unit circle (also dependent on $\cG$), such that 
\eqnst{
  A(w,v)
  = \frac{c_0}{\pi} \log |v - w| + C_0 + \frac{U_2((v - w)/|v - w|)}{|v - w|^2} 
    + O \left( |v - w|^{-3} \right), }
as $| v - w | \to \infty$. We have $c_0 = |\det(M)|/ Z_T \sigma^2$.
\end{proposition}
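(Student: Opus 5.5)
We plan to reduce to a genuine random walk on a two-dimensional lattice, to which the expansion of Fukai and Uchiyama \cite[Theorem 2]{FU96} applies, and then to pass from that walk back to arbitrary pairs $(w,v)$ using harmonicity. This follows the route indicated before the statement, as an alternative to the Markov additive process result of \cite{U07}.

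\emph{Step 1: the embedded walk on one orbit.} Fix $w$ and let $L = w + M\Z^2$ be the orbit $[w]$. Observe $(S_n)$, started at $w$, only at its successive visits to $L$. This gives a random walk $X_k$ on $L \cong \Z^2$. Its increments have exponential tails, because $T$ is finite and the chain on $T$ is irreducible, so the excursion lengths between visits to $L$ have exponential tails. The increment law is symmetric, $p(x) = p(-x)$. This follows from reversibility with respect to $\deg$: a path from $w$ to $w+x$ reversed is a path from $w+x$ to $w$ of the same weight, and translating it gives a path from $w$ to $w-x$. Symmetry kills all odd moments. Rotational invariance about $w$ (Assumption \ref{a:graph}(ii), with $q \ge 3$) makes the covariance of $X_1$ a multiple of the identity. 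Since $A(w,\cdot)$ restricted to $L$ is, up to the factor $1/\deg(w)$, the potential kernel of $X$, \cite[Theorem 2]{FU96} gives
\[ A(w,v) = \frac{c_0}{\pi}\log|v-w| + C + \frac{U_2((v-w)/|v-w|)}{|v-w|^2} + O(|v-w|^{-3}) \quad \text{for } v \in L. \]
The $|x|^{-1}$ term vanishes because it is built from third moments. The value of $c_0$ comes from the covariance of $X_1$. By Kac's lemma, the mean excursion length is $Z_T/\deg(w)$, so the covariance of $X_1$ is $(Z_T/\deg(w))\,Q$ up to a centred correction that vanishes by symmetry. Normalising the lattice by $|\det M|$ then yields $c_0 = |\det M|/(Z_T\sigma^2)$.

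\emph{Step 2: general $v$.} For $v \notin L$, let $\tau$ be the hitting time of $L$ by the walk started at $v$. Since $A(w,\cdot)$ is harmonic off $w$, optional stopping (justified by the exponential tails of $\tau$ and the logarithmic growth of $A$) gives $A(w,v) = \E^v[A(w,S_\tau)]$. Write $S_\tau = v + \xi$, with $\xi$ exponentially tailed. Taylor-expand the Step 1 expansion around $v$ to third order. The first-order term involves $\E^v[\xi]$. Rotation by $2\pi/q_v$ about $v$ preserves $\cG$ and maps $L$ to $L$, since it maps $[w]$-translates to the same orbit class. Hence $\E^v[\xi]$ is invariant under a rotation with $q_v \ge 3$ and must vanish.

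The second-order term, $\tfrac12 \E^v[\xi^{\mathsf T} D^2 \log|\cdot|\,\xi]$, is homogeneous of degree $-2$ in $v-w$ and smooth in direction. It is absorbed into a modified $U_2$. The third-order remainder and the correction to the $U_2$ term are $O(|v-w|^{-3})$. The constants $C$ and $U_2$ a priori depend on the orbits $[v],[w]$. The statement's $C_0,U_2$ can be taken orbitwise, or, if orbit-independence is required, one checks that the constant term is unaffected at this order, since the zeroth-order term is exactly $C$.

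\emph{Main obstacle.} We expect the main difficulty to be the justification in Step 1 that \cite[Theorem 2]{FU96} applies to the embedded walk with the stated error. This requires verifying its moment and aperiodicity hypotheses, including that $X$ generates all of $L$ and not merely a sublattice, and tracking exactly which moments enter the $|x|^{-1}$ and $|x|^{-2}$ coefficients. In particular, symmetry rather than rotation is what removes the odd term when $q=3$. We would first check this carefully through the characteristic-function expansion $1-\phi(\theta) = Q'(\theta) - (\text{4th-order}) + O(|\theta|^6)$, which contains no cubic term.
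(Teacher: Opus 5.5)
Your route is the paper's appendix proof: embedded walk on $\cL^w$, \cite[Theorem 2]{FU96}, $\E^w[\tau_1]=Z_T/\deg(w)$ to identify $c_0$, then optional stopping to reach general $v$. However, Step 2 as written does not prove the proposition, which asserts a single $C_0$ and a single $U_2$ depending only on $\cG$.

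You treat the second-order Taylor term $\tfrac12\E^v[\xi^{\mathsf T}D^2\log|\cdot-w|\,\xi]$ as a genuine contribution and absorb it into a modified $U_2$. That function then depends on $[v]$ through $\E^v[\xi\xi^{\mathsf T}]$, and you fall back on orbitwise constants. The missing observation is that this term is zero. By rotational symmetry about every vertex, $S_t-S_0$ and $(S_t-S_0)(S_t-S_0)^{\mathsf T}-\sum_{s<t}\Sigma(S_s)$ are martingales, where $\Sigma(u)=\deg(u)^{-1}\sum_{u'\sim u}(u'-u)(u'-u)^{\mathsf T}$ is a multiple of the identity. Optional stopping then gives $\E^v[\xi\xi^{\mathsf T}]=\kappa I$, so the term equals $\tfrac{\kappa}{2}\Delta^{(c)}\log|v-w|=0$. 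This is the paper's estimate $\Delta g_1=O(|v-w|^{-3})$ via Lemma \ref{lem:lapl}.

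Hence, for fixed $w$, the expansion holds on all of $\cV$ with constants $C_0([w])$ and $U_2^{[w]}$ depending on $[w]$ only. Only then does $A(w,v)=A(v,w)$ finish the argument. Comparing the two expansions gives $C_0([w])=C_0([v])$ and $U_2^{[w]}(\theta)=U_2^{[v]}(-\theta)$ for all pairs of orbits. The case $[v]=[w]$ gives evenness, and hence orbit-independence of $U_2$. Your remark that the zeroth-order term is exactly $C$ removes only the $[v]$-dependence of the constant. It does not remove its $[w]$-dependence, and says nothing about $U_2$.

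A second, more easily repaired, problem is your claim that rotation about $v$ maps $L$ to $L$; this is false in general. Take $\cG=\Z^2$ with the admissible periods $\mathbf{h_1}=(4,0)$, $\mathbf{h_2}=(0,1)$, and $v=o$, $w=(1,0)$. Then no nontrivial rotation about $o$ preserves $L=w+4\Z\times\Z$. Likewise, rotation by $\pi/2$ about $w$ does not preserve $L$, so the isotropy of $X_1$ in Step 1 does not follow from rotation about $w$. Both facts you need, $\E^v[\xi]=0$ and isotropy of $\mathrm{Cov}(X_1)$, follow instead from the martingale argument above, as in the paper's Lemma \ref{lem:Y-uncorr}.

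Your use of reversibility plus translation invariance to get $p(x)=p(-x)$, and hence no $|x|^{-1}$ term, is correct. It is indeed the right reason when $q=3$, since three-fold rotation alone does not kill cubic moments.
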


\begin{proof}
The forms of the main and constant terms follow immediately from \cite[Theorem 1.2]{KU08}.
(Observe that while in that theorem the constant term $C_0$ can depend on $[w]$, in our case reversibility
gives that it does not.)
In the same theorem,
it is easy to see that the coefficient of the term of order $|v - w|^{-1}$ vanishes (in the notation of
\cite{KU08}, the vectors $c$ and $c^*$ vanish, due to rotational symmetry). The form of the term of order
$|v - w|^{-2}$ follows from \cite[Eqn.~(15)]{KU08} and the discussion following it.
\end{proof}

\subsection{Harmonic Functions}
\label{ssec:harmonic}

We recall some properties of harmonic functions and their discrete analogues. 

\begin{definition}
\label{def2_1}
    Let $U\subset\mathbb{R}^2$ be an open connected region with boundary \(\partial U\). A continuous function \(f : U\cup\partial U \rightarrow \mathbb{R}\) is said to be \it{harmonic} in \(U\) if it is twice continuously differentiable and satisfies 
    \[
        \Delta^{(c)} f = 0 \; \text{ in } U.
    \]
    For a given continuous function \(g : \partial U \rightarrow \mathbb{R}\), \(f\) is said to be the solution to Laplace's equation with Dirichlet boundary condition \(g\) if, in addition, it satisfies
    \[
        f = g \; \text{ on } \partial U.
    \]
\end{definition}

\begin{theorem}[{See \cite[Corollary 10.4.17]{Conway-book-I}}]
\label{thm2_1}
Let $U \subset\mathbb{R}^2$ be a connected region satisfying Assumption \ref{a:U},
and let \(g : \partial U \rightarrow \mathbb{R}\) be bounded and continuous.
There exists a unique function \(u : U\cup\partial U \rightarrow \mathbb{R}\) that 
is harmonic in \(U\) and satisfies the Dirichlet boundary condition \(u = g\) 
on \(\partial U\). 
\end{theorem}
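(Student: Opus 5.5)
The plan is to prove existence by the Perron method and uniqueness by a maximum principle. We work on the Riemann sphere $\hat\C$ and treat $\infty$ as a possible boundary point. Throughout, "unique" is understood among \emph{bounded} harmonic functions, which is the class relevant for us since $g$ is bounded.

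\emph{Reduction to a bounded region.} If $\C\setminus \overline U$ contains a disc around some $a$, the Möbius map $z\mapsto 1/(z-a)$ sends $U$ to a bounded region. This map preserves harmonicity and continuity of boundary data, and it also preserves the arc condition of Assumption \ref{a:U}, since images of simple curves are simple curves. In the remaining case (ii), $\partial_\infty U$ contains $\infty$, and we again compose with a Möbius map sending a boundary point to $\infty$ and some finite exterior point to a finite point. The net effect is to reduce to a bounded region $V$ with continuous boundary data $\tilde g$ on $\partial V$. This $\tilde g$ is defined and continuous everywhere on $\partial V$ except possibly at one image point $p$ of $\infty$, and it is bounded.

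\emph{Existence.} Let $u$ be the Perron function: the supremum of all subharmonic $v$ on $V$ with $\limsup_{z\to\zeta} v(z)\le \tilde g(\zeta)$ for $\zeta\in\partial V$. The standard argument (Harnack's principle plus Poisson modification on discs) shows that $u$ is harmonic and bounded by $\sup|g|$. For boundary continuity we need a barrier at each $\zeta\in\partial V$. By Assumption \ref{a:U} there is a simple arc $\gamma\subset V^c$ starting at $\zeta$. The complement of $\gamma$ in $\hat\C$ is simply connected, so there is a single-valued branch of $\sqrt{(z-\zeta)/(z-\gamma(1))}$ on $\hat\C\setminus\gamma$. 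Taking minus the real part, suitably normalised, then gives a local barrier at $\zeta$. Hence every $\zeta\ne p$ is regular, and $u(z)\to\tilde g(\zeta)$ there. The exceptional point $p$, where $\tilde g$ may fail to have a limit, does not matter: boundary values are only prescribed on $\partial U$ in $\C$.

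\emph{Uniqueness, and the main obstacle.} Let $u_1,u_2$ be two bounded solutions and set $w=u_1-u_2$. Then $w$ is bounded and harmonic with boundary limit $0$ at every point of $\partial V$ except possibly $p$. I expect this step to be the delicate one, since the plain maximum principle fails when a single boundary point is uncontrolled. The remedy is the extended maximum principle: a single point is polar. Concretely, consider $w\pm\delta\log(\operatorname{diam}V/|z-p|)$; this function is harmonic on $V$, has boundary $\liminf$ at least $0$ everywhere (including at $p$, because $w$ is bounded), and so is nonnegative. Letting $\delta\to0$ gives $w\equiv 0$. Transporting back via the Möbius map yields the unique bounded harmonic $u$ on $U$ with $u=g$ on $\partial U$, as claimed in Theorem \ref{thm2_1}.
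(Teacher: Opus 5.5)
Your overall route (Perron's method, barriers built from the arc in Assumption \ref{a:U}, and a maximum principle) is the standard argument behind the Conway corollary that the paper cites without proof. Your restriction of uniqueness to bounded solutions is genuinely needed: on the upper half plane with $g=0$, $u=\Im z$ is a second solution in the sense of Definition \ref{def2_1}. However, the barrier step fails as written. With $m(z)=(z-\zeta)/(z-\gamma(1))$ we have $\Re\sqrt{m}=|m|^{1/2}\cos(\tfrac12\arg m)$. This has constant sign near $\zeta$ only if the continuous branch of $\arg m$ stays in a window of length $2\pi$ there. That holds for a straight segment, which is the classical barrier, but not for a general simple arc. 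Let $U=\C\setminus\Gamma$, where $\Gamma$ is the spiral $\{\zeta+se^{i/s}:0<s\le1\}\cup\{\zeta\}$ continued radially to $\infty$. This $U$ satisfies Assumption \ref{a:U}, and any arc $\gamma\subset U^c$ from $\zeta$ is an initial piece of $\Gamma$. Along the spiral channel of $U$ leading to $\zeta$ the continuous argument of $m$ is unbounded. Hence $\Re\sqrt{m}$ changes sign in every neighbourhood of $\zeta$, and no normalisation turns $\pm\Re\sqrt{m}$ into a barrier. The repair uses the same simple connectivity with a logarithm instead. Take a branch of $\log m$ on $\hat{\C}\setminus\gamma$ and set $w=-\Re(1/\log m)=-\log|m|/|\log m|^2$. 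This $w$ is harmonic and strictly positive wherever $|m|<1$, whatever the argument. It also satisfies $w\le 1/|\log|m||\to0$ as $z\to\zeta$, so it is a local barrier, which suffices. Alternatively, map $\hat{\C}\setminus\gamma$ onto $\D$ and use Carath\'eodory's theorem: the endpoint $\zeta$ corresponds to a single boundary point.

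Second, the reduction to a bounded $V$ breaks down whenever $\overline U=\C$, which is a sub-case of (ii). Examples are the slit plane $\C\setminus[0,\infty)$ and the spiral example above. There is then no exterior point, and no M\"obius image of $U$ is bounded, because the pole of the M\"obius map lies in the closure of $U$ in the sphere. Existence does not need the reduction: Perron's method runs directly on regions in the sphere, and $\infty$ needs no barrier since nothing is prescribed there. Your uniqueness argument, however, relies on $|z-p|\le\operatorname{diam}V$, and without a bounded $V$ it has nothing to act on. It can be done without boundedness. Pick a finite $a\in\partial U$ and set $T(z)=1/(z-a)$. The uncontrolled point becomes $T(\infty)=0$, while $\tilde w=w\circ T^{-1}\to0$ at $\infty$, because $w=u_1-u_2$ is continuous at $a$ with $w(a)=0$. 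Let $\eta_R=\sup\{|\tilde w(z)|:z\in T(U),\,|z|=R\}$, which tends to $0$. Apply the minimum principle on the bounded set $T(U)\cap\{|z|<R\}$ to $\pm\tilde w+\eta_R+\delta\log(R/|z|)$, then let $\delta\to0$ and $R\to\infty$; this gives $w\equiv0$. Equivalently, quote the extended maximum principle, since a single point is polar.
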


We recall a discrete analogue to harmonic functions.
\begin{definition}
\label{def2_2}
    Let $U\subset\mathbb{R}^2$ be a connected region with boundary \(\partial U\), satisfying
    Assumption \ref{a:U}. Fix \(\epsilon > 0\).
    \begin{itemize}
        \item Let $f : \epsilon\cV \rightarrow \mathbb{R}$ and let $e = (z,w)$ be an oriented edge of $(\eps\cV, \eps \cE)$. 
        Define the discrete derivative $\partial_e$ to be the operator given by 
            \[
                (\partial_e f)(x) = f(w) - f(z) 
                = f(z+e) - f(z), \quad x \in \eps\cV.
            \] 
        \item We define the discrete Laplace operator $\Delta$
            \[
                (\Delta f)(z) = \sum_{e \in \eps\cE : e_- = z} (f(z)-f(e_+)) 
                = -\sum_{e} \partial_e f(z), \quad z\in\eps\cV.
            \]
        \item A function \(f : U_\epsilon \cup \partial U_\epsilon \rightarrow \mathbb{R}\) is said to be discrete harmonic in \(U_\epsilon\) if it satisfies \(\Delta f = 0\) in \(U_\epsilon\). (For the purposes of computing $(\Delta f)(z)$ when $z \in U_\eps$ 
        and $e \in \partial U_\eps$ with $e_- = z$, the value of $f$ at $e_+$ is taken to be $f(e)$.)
        \item For a given function \(g : \partial U_\epsilon \rightarrow \mathbb{R}\), $f$ is said to be the solution to the discrete Laplace equation with Dirichlet boundary condition \(g\) if, in addition, \(f = g\) on \(\partial U_\epsilon\).
        \end{itemize}
\end{definition}
The following is a well-known discrete version of Theorem \ref{thm2_1}.
\begin{theorem}[{\cite[Theorem 6.2.1]{LL2010}}]
\label{thm2_2}
    Assume $U$ is bounded. Fix \(\epsilon > 0\) and 
    let \(g : \partial U_\epsilon \rightarrow \mathbb{R}\). 
    There exists a unique function $u : U_\epsilon \cup \partial U_\epsilon\rightarrow \mathbb{R}$ that is discrete harmonic in \(U_\epsilon\) and that satisfies the Dirichlet boundary conditions \(u = g\) on \(\partial U_\epsilon\). Moreover, the solution is given by 
    \[
        u(z) = \sum_{f \in\partial U_\epsilon} H_{\partial U_\epsilon}(z,f) g(f), 
             \quad z \in U_\epsilon, \qquad\qquad u(f) = g(f), \quad f \in \partial U_\eps.
    \]
    where \(H_{\partial U_\epsilon}(z, \cdot) : \partial U_\epsilon \rightarrow \mathbb{R}\) denotes the exit distribution, or Poisson kernel, for the simple random walk on \(\epsilon\cV\) started at \(z\in U_\epsilon\), stopped on first exiting \(U_\epsilon\). 
    That is, denoting the simple random walk by \((S_n)_{n\geq 0}\), and the stopping 
    time \(\tau_{U_\epsilon} = \inf\{n \geq 0 : E_n \in \partial U_\epsilon\}\), we have 
    \[
        H_{\partial U_\epsilon}(z,f) 
        = \Pr^z[E_{\tau_{\partial U_\epsilon}} = f ], \quad
          z\in U_\epsilon, \, f \in \partial U_\epsilon.
    \]
\end{theorem}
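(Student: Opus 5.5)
The plan is the standard probabilistic argument: define $u$ by the exit-distribution formula, verify discrete harmonicity via a one-step Markov decomposition, and obtain uniqueness from a maximum principle (equivalently, optional stopping). Throughout, $\eps$ is fixed and $U$ is bounded. By local finiteness of the embedding (Assumption \ref{a:graph}(i)), $U_\eps$ is therefore a finite set of vertices.

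\emph{Step 1: the walk exits almost surely.} I first show that $\tau_{U_\eps}<\infty$ almost surely, uniformly in the starting point. Since $U$ is bounded and $U\ne\C$, every $z\in U_\eps$ can be joined by a path in $\eps\cG$ to a vertex far outside $U$. Such a path must use an edge whose closed segment meets $\partial U$, i.e.\ an element of $\partial U_\eps$. We may take its length to be at most $L=|U_\eps|+1$: the path visits distinct vertices of $U_\eps$ until its first boundary edge. Hence there is $\delta>0$ with $\Pr^z[\tau_{U_\eps}\le L]\ge\delta$ for all $z\in U_\eps$. Iterating with the Markov property gives $\Pr^z[\tau_{U_\eps}>kL]\le(1-\delta)^k$. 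In particular $E_{\tau_{U_\eps}}$ is well defined and $H_{\partial U_\eps}(z,\cdot)$ is a probability measure on the finite set $\partial U_\eps$.

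\emph{Step 2: existence.} Define $u$ by the displayed formula; it is finite because $\partial U_\eps$ is finite. Fix $z\in U_\eps$ and condition on the first edge $e$ (with $e_-=z$) crossed by the walk. There are two cases.
\begin{itemize}
\item If $e\in\partial U_\eps$, then $\tau_{U_\eps}=1$ and the exit edge is $e$, which contributes $g(e)=u(e)$. This matches the convention in Definition \ref{def2_2} that $f(e_+)$ is read as $f(e)$ for boundary edges.
\item Otherwise $e_+\in U_\eps$, and by the strong Markov property the conditional exit law is $H_{\partial U_\eps}(e_+,\cdot)$, which contributes $u(e_+)$.
\end{itemize}
Since each edge is chosen with probability $1/\deg(z)$ (parallel edges counted separately), we get $u(z)=\frac1{\deg(z)}\sum_{e_-=z}u(e_+)$, that is, $(\Delta u)(z)=0$. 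The boundary condition holds by definition.

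\emph{Step 3: uniqueness.} Let $u_1,u_2$ be two solutions and set $w=u_1-u_2$. Then $w$ is discrete harmonic in $U_\eps$ and vanishes on $\partial U_\eps$. Because $U_\eps$ is finite, $w$ attains its maximum $M$ over $U_\eps$ at some $z_0$. Suppose $M>0$. The mean-value property forces every neighbour value of $z_0$ to equal $M$: the values $w(e)$ on boundary edges are $0<M$, so they must be absent. Propagating along the path from Step 1, we reach a vertex adjacent to a boundary edge, where the mean value would be strictly below $M$, a contradiction. So $M\le0$, and applying the same argument to $-w$ gives $w\equiv0$.

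Alternatively, $w(S_{n\wedge\tau})$ is a bounded martingale, and optional stopping with $\tau<\infty$ a.s.\ gives $w(z)=\E^z[w(E_\tau)]=0$.

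The only point requiring care, and the main obstacle, is the slit-boundary situation where both endpoints of an edge lie in $U_\eps$ but the edge crosses $\partial U$. This is handled by working with edges rather than vertices throughout: the walk is stopped upon \emph{crossing} a boundary edge. The exit time is then a stopping time for the edge-augmented walk $(S_n,E_n)$, so the Markov-property arguments above apply verbatim.
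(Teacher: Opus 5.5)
Your proof is correct and follows essentially the same route as the cited \cite[Theorem 6.2.1]{LL2010}, which the paper invokes without reproducing. You get existence from the one-step Markov decomposition and uniqueness from the maximum principle or optional stopping, and your edge-based treatment of the exit time correctly handles slit boundary pieces. One cosmetic correction: in the optional-stopping variant the stopped process should be $w(S_n)$ for $n<\tau_{U_\eps}$ and $w(E_{\tau_{U_\eps}})$ afterwards, not literally $w(S_{n\wedge\tau})$, because for a slit edge $S_{\tau}$ may itself lie in $U_\eps$.
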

We will need that discrete harmonic functions, together with their partial derivatives, approach their continuous analogue as the mesh \(\epsilon\downarrow 0\). We state next a version of the classical result \cite{Courtant1928} that we will use in computations. Specifically, we will 
use part (ii) of the theorem.

\begin{theorem}
\label{thm2_3}
    Let $U\subset\mathbb{R}^2$ be a bounded connected region satisfying Assumption \ref{a:U}(i). Let \(g : W\rightarrow \mathbb{R}\) be a $C^1$ function on an open set \(W\) containing \(\partial U_\epsilon\) for all \(\epsilon > 0\) sufficiently small. Let \(u : U \cup \partial U \rightarrow \mathbb{R}\) denote the solution to the Laplace equation on \(U\) with Dirichlet boundary condition \(g\). For \(\epsilon > 0\) such that \(\partial U_\epsilon \subset W\), let \(u_\epsilon : U_\epsilon \cup \partial U_\epsilon \rightarrow \mathbb{R}\) denote the solution to the discrete Laplace equation on \(U_\epsilon\) with Dirichlet boundary condition \(u_\epsilon = g\) on \(\partial U_\epsilon\). \\
    (i) The family of functions \((u_\epsilon)\) satisfy
    \[
        u_\epsilon = u + \mathcal{O}(\eps) \; \text{ as } \epsilon \downarrow 0,
    \]
    where the error term is locally uniform. Moreover, the error term is also uniform 
    in $U$, as long as $|g|$ remains under a fixed bound, and the distance from 
    $\partial U$ is bounded away from $0$.\\
    (ii) Furthermore, for \(z \in U\) and \(e \in \eps \cE\) an edge incident 
    with $z_\eps$, we have 
    \[
        \partial_e u_\epsilon(z_\epsilon) 
        = u_\epsilon(z_\epsilon + e)-u_\epsilon(z_\epsilon)
        = \eps \tilde{e} \cdot \nabla u (x) + O(\eps^2), \; \text{as } \epsilon \downarrow 0,
    \]
    where $\tilde{e} = e/\eps$ is the corresponding vector in $\cE$. 
    The error term satisfies the same uniformity statement as in part (i).
\end{theorem}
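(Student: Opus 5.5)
The plan is to treat part (i) as a maximum principle comparison on the lattice, and to deduce part (ii) from part (i) via a discrete interior gradient estimate.

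\textbf{Part (i).} Let $\tilde u$ be the function on $U_\eps \cup \partial U_\eps$ given by $\tilde u = u$ on $U_\eps$. For a boundary edge $e \in \partial U_\eps$, set $\tilde u(e) := g(\zeta_e)$, where $\zeta_e$ is a point of $\overline{e^- e^+} \cap \partial U$. Put $w_\eps := u_\eps - \tilde u$. Since $g$ is $C^1$ on $W$ and $|\zeta_e - e^+| \le C\eps$, the boundary values satisfy $|w_\eps(e)| = |g(e^+) - g(\zeta_e)| = O(\eps)$. This bound is uniform as long as $\|g\|_{C^1(W)}$ stays bounded.

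In the interior, Theorem \ref{thm2_2} gives a representation of $w_\eps$ as a harmonic extension of its boundary values plus a Green potential:
\[ w_\eps(z) = \sum_{f \in \partial U_\eps} H_{\partial U_\eps}(z,f)\, w_\eps(f) + \sum_{x \in U_\eps} G_{U_\eps}(z,x)\,\deg(x)\,\big(\Delta \tilde u\big)(x). \]
(Sign conventions will be checked.) The first sum is $O(\eps)$ because $H_{\partial U_\eps}(z,\cdot)$ is a probability measure.

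For the second sum I would Taylor expand $u$ around $x$. By Assumption \ref{a:graph}(ii), the first moments of the steps vanish, and the second moments are isotropic. Hence the second-order term is a multiple of $\eps^2 \Delta^{(c)} u(x) = 0$. The rotation by $2\pi/q$ with $q \in \{3,4,6\}$ does not kill third moments when $q = 3$. For that case I would add a lattice corrector: a periodic function $\chi$ chosen so that $u + \eps^3 \chi \cdot D^3 u$ has discrete Laplacian $O(\eps^4 |D^4 u|)$. This leaves local errors of size $\eps^3 |D^3 u(x)|$ or $\eps^4 |D^4 u(x)|$. Interior Cauchy estimates give $|D^k u(x)| \le C_k \|g\|_\infty\, d(x)^{-k}$, where $d(x) = \mathrm{dist}(x,\partial U)$.

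Summing these errors against the Green function needs more care near the boundary. In the region $d(x) \ge \delta$, for fixed $\delta$, the contribution is $O(\eps^2)$. In the boundary layer I would use the Beurling-type estimate supplied by Assumption \ref{a:U}: every boundary point is the end of a curve in $U^c$. This gives $G_{U_\eps}(z,x) \lesssim \eps^0 (d(x))^{1/2}$ for $z$ fixed in the interior.

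\textbf{Main obstacle.} The boundary layer is where the real difficulty lies. The naive power count gives only $O(\eps^{1/2})$ there. To recover $O(\eps)$ I would try the following. Instead of the Cauchy bounds near the boundary, use the $C^1$ regularity of $g$ together with the boundary condition. Write $u = g + (u - g)$, where $g$ is extended as a $C^1$ function. The part $\Delta g$ is then handled as a discrete divergence of an $O(\eps)$ gradient field. This divergence can be summed by parts against $G$, using that $\sum_x |\nabla G_{U_\eps}(z,x)|\,\eps^2$ is bounded. The remaining piece $u - g$ vanishes on $\partial U$, and its boundary layer contribution would be estimated via the exit-time bound $\E^z[\tau]\,\eps^2 = O(1)$. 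If this fails to give $O(\eps)$, the fallback is to prove the estimate first on smooth subdomains and then pass to $U$ by monotone approximation. Uniformity in $z$ at a fixed distance from $\partial U$ and in bounded $|g|$ follows from tracking the constants above.

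\textbf{Part (ii).} Fix $z$ and a disc $B = B(z_\eps, r)$ with $r = d(z)/2$. On $B \cap \eps\cV$, the function $w_\eps$ is discrete harmonic up to the $O(\eps^3 r^{-3})$ source term from above, and by part (i) it satisfies $\sup_B |w_\eps| = O(\eps)$. A discrete interior gradient estimate for the periodic walk then gives $|\partial_e w_\eps(z_\eps)| \le C (\eps/r) \sup_B |w_\eps| + O(\eps^3) = O(\eps^2)$. This estimate follows from the difference estimates for the harmonic measure of $B$, obtainable from Proposition \ref{prop:A-asymp} and Lemma \ref{lem:finite-Green}. Finally, since $u$ is smooth near $z$ and $|z - z_\eps| = O(\eps)$, Taylor expansion gives $\partial_e u(z_\eps) = \eps\,\tilde e \cdot \nabla u(z) + O(\eps^2)$. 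Combining this with the bound on $\partial_e w_\eps(z_\eps)$ yields (ii), with the same uniformity as in part (i).
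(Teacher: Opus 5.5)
\textbf{There is a genuine gap in part (i), at the boundary layer you yourself flag, and your sketched remedies do not close it.}

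To see what is required, note that $H_{\partial U_\eps}(z,f)=G_{U_\eps}(z,f^-)$. Hence $\sum_{f\in\partial U_\eps}G_{U_\eps}(z,f^-)=1$, so the vertices adjacent to $\partial U_\eps$ carry Green weight of order one. At those vertices no useful Taylor expansion is available. Your Cauchy bounds give only $|\Delta\tilde u|\lesssim\|g\|_\infty$ there, so the innermost layer alone is bounded by a constant, not by $O(\eps)$.

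An $O(\eps)$ bound needs two things. First, the oscillation of $u$ over $O(\eps)$-neighbourhoods of $\partial U$ must be $O(\eps)$ on average with respect to harmonic measure from $z$. Second, in the layers $\eps\ll d(x)\ll 1$, the derivative bounds must be expressed through this boundary oscillation rather than through $\|g\|_\infty$. For $C^1$ data on domains allowed by Assumption \ref{a:U} (corners, slits, rough boundary pieces), $u$ need not be Lipschitz up to $\partial U$. So this is a new quantitative boundary estimate, and the proposal does not contain it.

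Your concrete suggestions do not supply it either.
\begin{itemize}
\item Summing $\Delta g$ by parts against $G_{U_\eps}(z,\cdot)$ over the whole domain just reproduces $g(z)-\sum_f H_{\partial U_\eps}(z,f)g(f)$, which is of order one. Over a layer it produces interface terms. Any smallness would therefore have to come from cancellation with the $(u-g)$ part.
\item Bounding the $(u-g)$ part via $\E^z[\tau]\eps^2=O(1)$ would require $|\Delta(u-g)|=O(\eps^3)$ throughout the layer. Next to $\partial U_\eps$, however, $\Delta(u-g)$ is of the size of $|u-g|$ at distance $\eps$ from $\partial U$, which is at best $O(\eps)$.
\item The fallback through smooth subdomains and monotone approximation gives convergence but no rate. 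The constants for the approximating domains are not uniform, and their discrepancy with $U$ is controlled only qualitatively.
\end{itemize}

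\textbf{Your part (ii) is sound, conditionally on (i).} The bound $|H(v)-H(v')|\le C(\eps/r)\max_{\partial B}|H|$ is Step 4 in Appendix \ref{ssec:convergence-proof}. Applied to $w_\eps$ it gives $\partial_e w_\eps(z_\eps)=O(\eps^2)$. One small correction: since $\sum_x|\partial_e G_B(z_\eps,x)|\lesssim r/\eps$, the source contributes $O(\eps^2 r^{-2})$ rather than $O(\eps^3)$. This does not affect the conclusion.

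\textbf{The paper argues differently.} It never estimates $u_\eps-u$ directly. It proceeds as follows.
\begin{itemize}
\item It derives equicontinuity of $(u_\eps)$ from a discrete mean-value property, using the potential-kernel asymptotics and a discrete Green identity, following Chelkak--Smirnov.
\item It shows that subsequential limits are harmonic.
\item It uses the Beurling-type estimate only to check that limits attain the boundary values, and concludes by uniqueness of the Dirichlet problem.
\item For (ii), it runs the same compactness argument on difference quotients along $h_1,h_2$, then passes to edges by averaging over the first return to $\cL^x$.
\end{itemize}
This avoids your boundary-layer problem entirely. Be aware, though, that as written it yields only locally uniform convergence: $o(1)$ in (i) and $o(\eps)$ in (ii), not the explicit $O(\eps)$ and $O(\eps^2)$.

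So the missing quantitative boundary estimate is not something you overlooked in the paper. Either approach would need it to reach the stated rates. Your reduction of the rate in (ii) to the rate in (i) is a useful step in that direction.
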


While the proof is essentially available elsewhere, we provide a summary of it in 
Appendix \ref{sec:proof-harmonic-conv}, since the uniformity claims will
be important for us to extend to unbounded regions.

\section{Bounded regions}
\label{sec:proof-main}

In this section, we prove Theorem \ref{thm:scaling-form}(i) for bounded $U$.

\subsection{Asymptotics of the transfer-current matrix entries}
\label{ssec:transf-current}

For convenience of notation in the next proposition, let us introduce the following abbreviations. For 
$z, w \in U_\eps$, $g \in \partial U_\eps$, and edges $e, f$ of $U_\eps$ incident with $x$, let
\eqnsplst{
  \hat{e}
  &= \eps^{-1} e \qquad\qquad \hat{f} = \eps^{-1} f \\
  G_{\eps}(z,w)
  &:= G_{\eps^{-1} U_\eps} (\eps^{-1} z, \eps^{-1} w), \\
  A_\eps(z,w)
  &:= A(\eps^{-1} z, \eps^{-1} w), \\
  H_{\eps}(z,g)
  &:= H_{\eps^{-1} U_\eps} (\eps^{-1} z, \eps^{-1} g) \\
  Y_{\eps}(e,f)
  &= Y_{\eps^{-1} U_\eps} ( \hat{e}, \hat{f} ). }
We will also need a quadratic form $J_{e,f}$ defined in the following way. 
Let $g_U(z,w)$ denote the Green function of $U$, and let 
$k_U(z,w) = g_U(z,w) + \log|z-w|$, which is symmetric and 
harmonic in both variables \cite[Chapter 10]{Conway-book-I}. Let $Q_{U}(z; \cdot, \cdot)$ 
denote the quadratic form obtained by taking the gradient of $k_U$ in both variables, and 
evaluating it at $(z,z)$. Put
\eqn{e:Jef-def}
{  J_{e,f}
   = J_{e,f,z,U}
   = Q_U(z;e,f)
   = - (\hat{e} \cdot \nabla^{(1)})(\hat{f} \cdot \nabla^{(2)}) k_U (z,z). }
Note that $J_{e,f} = J_{f,e}$, as follows from the symmetry $g_U(z,w) = g_U(w,z)$.
For later use, also note that for each $w \in U$, the 
function $z \mapsto (\hat{f} \cdot \nabla^{(2)}) k_U(z,w)$ equals the harmonic function 
in $U$ with boundary values $z \mapsto - \Re (\hat{f}/(z - w))$.

\begin{proposition}
\label{prop:Y-asymp} 
Suppose that $\cG$ satisfies Assumption \ref{a:graph} and that $U$ satisfies 
Assumption \ref{a:U}.
For any $z \in U_\eps$ and edges $e,f$ incident with $z$, we have 
\eqnspl{e:Yeps-asymp}{
 Y_{\eps}(e,f)
 = Y(\hat{e}, \hat{f}) - \eps^2 \frac{c_0}{\pi} J_{e,f} + O(\eps^3), }
where $c_0$ is the constant from Proposition \ref{prop:A-asymp}, and
where the error term is locally uniform in $z$, when $z$ is viewed as a point in $U$.\\
\end{proposition}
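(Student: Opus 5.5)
Write $\Lambda = \eps^{-1}U_\eps$, $x = \eps^{-1}z$, and $\hat e = (x,u)$, $\hat f = (x,v)$ (so $e_- = f_- = x$). The plan is to insert the decomposition of Lemma \ref{lem:finite-Green} into the definition of the transfer current. This gives
\[ G_\Lambda(a,b) = -A(a,b) + h_b(a), \qquad h_b(a) := \sum_{g\in\partial\Lambda} H_\Lambda(a,g)\,A(g^+,b). \]
The term $-A$ yields exactly the full-plane transfer current $Y(\hat e,\hat f) = -\partial^1_e\partial^2_f A(x,x)$. So it remains to show
\[ \partial^1_{\hat e}\partial^2_{\hat f}\, h(x,x) = -\eps^2 \frac{c_0}{\pi} J_{e,f} + O(\eps^3), \]
where $h(a,b) = h_b(a)$. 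Here $h(\cdot,b)$ is discrete harmonic in $\Lambda$, with boundary data $A(g^+,b)$ on $\partial\Lambda$.

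\textbf{Step 1: take the difference in the second variable on the boundary data.} Set $\phi(a) := h(a,v) - h(a,x)$. This is the discrete harmonic function on $\Lambda$ with boundary values $A(g^+,v) - A(g^+,x)$. Boundary points satisfy $|g^+ - x| \asymp \eps^{-1}$, because $z$ is at macroscopic distance from $\partial U$. Proposition \ref{prop:A-asymp} then gives
\[ A(g^+,v)-A(g^+,x) = \frac{c_0}{\pi}\bigl(\log|g^+-v|-\log|g^+-x|\bigr) + O(\eps^2) = -\frac{c_0}{\pi}\,\Re\frac{\hat f}{g^+-x} + O(\eps^2). \]
The $U_2$ term contributes only $O(\eps^3)$ once differenced, and the $O(|\cdot|^{-3})$ remainder is $O(\eps^3)$. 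Rescaling with $w = \eps g^+$, the main term is $-\eps\frac{c_0}{\pi}\Re\bigl(\hat f/(w-z)\bigr)$. This is $\eps$ times a smooth function of $w$ on a neighbourhood $W$ of $\partial U$ that excludes $z$. The $O(\eps^2)$ error in the data passes to $\phi$ unchanged, by the maximum principle.

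\textbf{Step 2: difference in the first variable.} Let $\phi_1$ be the discrete harmonic extension of the data $-\frac{c_0}{\pi}\Re(\hat f/(w-z))$. Then $\phi = \eps\phi_1 + \psi$, where $\psi$ is harmonic with boundary data $O(\eps^2)$. Apply Theorem \ref{thm2_3}(ii) to $\phi_1$. The continuous harmonic extension of $-\Re(\hat f/(w-z))$ is $w\mapsto(\hat f\cdot\nabla^{(2)})k_U(w,z)$, as noted after \eqref{e:Jef-def}. Hence
\[ \partial_{\hat e}\phi_1(x) = \eps\,\frac{c_0}{\pi}\,(\hat e\cdot\nabla^{(1)})(\hat f\cdot\nabla^{(2)})k_U(z,z) + O(\eps^2) = -\eps\frac{c_0}{\pi}J_{e,f} + O(\eps^2). \]
Multiplying by $\eps$ gives the main term with an $O(\eps^3)$ error.

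\textbf{Step 3 (main obstacle): control $\partial_{\hat e}\psi(x)$.} The bound $|\psi| = O(\eps^2)$ does not suffice, because $\partial_{\hat e}\psi(x)$ must be $O(\eps^3)$. The fix is a discrete gradient estimate for harmonic functions: if $\psi$ is harmonic in a ball of radius $r\asymp\eps^{-1}$ around $x$, then $|\partial_{\hat e}\psi(x)| \le C r^{-1}\max|\psi|$. This is a Harnack / coupling-type estimate for the periodic lattice, of the kind used to prove Theorem \ref{thm2_3}(ii) in Appendix \ref{sec:proof-harmonic-conv}. Applied to $\psi$, it gives $O(\eps^2\cdot\eps) = O(\eps^3)$.

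A second subtlety concerns the boundary data. The boundary edges $g$ lie within $O(\eps)$ of $\partial U$, and Theorem \ref{thm2_3} is stated exactly for $C^1$ data on a neighbourhood $W$ of $\partial U_\eps$. So the data must be extended smoothly there, with the expansion of $A$ used on the whole of $W$. Local uniformity in $z$ follows from the uniformity clauses of Theorem \ref{thm2_3} and of the gradient estimate.

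For unbounded $U$, the same argument applies, using the limit from Lemma \ref{lem:limit-unbounded} and the uniform versions of the estimates. Alternatively, one first proves the bounded case and extends it in Section \ref{sec:unbounded}. I would present the bounded case here.
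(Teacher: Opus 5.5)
Your argument is correct and is essentially the paper's proof. It uses the same decomposition via Lemma \ref{lem:finite-Green}, the same expansion of the differenced boundary data $A(g^+,v)-A(g^+,x)$ via Proposition \ref{prop:A-asymp}, and Theorem \ref{thm2_3}(ii) applied to the data $\Re(\hat f/(w-z))$ to produce $-\eps^2\frac{c_0}{\pi}J_{e,f}$.

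The one difference is how the remainder is handled. The paper expands $\log|1+\eps\hat f(z-g^+)^{-1}|$ to second order. The explicit $\eps^2$ term is then again smooth data, contributing $\eps^2\cdot O(\eps)$ by Theorem \ref{thm2_3}(ii). What is left is $O(\eps^3)$ uniformly in $g$, which is harmless since $\sum_g |\partial^{(1)}_e H_\eps(z,g)| \le 2$. You stop at first order and instead gain the missing factor $\eps$ from the interior difference estimate (Step 4 of Appendix \ref{sec:proof-harmonic-conv}). This is equally valid and spares you from identifying the second-order term.
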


\begin{proof}
(i)    We recall from Lemma \ref{lem:finite-Green} that for $z, w \in U_\eps$ we have
    \[
        G_{\epsilon}(z,w) = -A_\eps(z,w) + \sum_{e \in \partial U_\epsilon} H_{\epsilon}(z,e) A_\epsilon(e^+,w).
    \]
    Hence it follows that 
    \eqnsplst{
        Y_\eps(e,f)
        &= \partial_e^{(1)} \partial_f^{(2)} G_{\epsilon}(z,z) \\
        &= -\partial_e^{(1)} \partial_f^{(2)} A_\epsilon(z,z) 
          + \sum_{g \in \partial U_\epsilon} \partial_e^{(1)}
            H_{\epsilon}(z,g) \partial_f^{(2)} A_\epsilon(g^+,z) \\
        &= Y( \hat{e}, \hat{f} ) 
          + \sum_{g \in \partial U_\epsilon} \partial_e^{(1)}
            H_{\epsilon}(z,g) \partial_f^{(2)} A_\epsilon(g^+,z). }
    Thus we need to estimate \(\partial_f^{(2)} A_\epsilon(g^+,z)\). 
    We use Proposition \ref{prop:A-asymp} to get
    \[
        A_\eps(g^+,z) =  \frac{c_0}{\pi} \log \left|\frac{z - g^+}{\eps}\right| + C_0 
        + \eps^2 \frac{U_2((z - g^+)/|z - g^+|)}{|z - g^+|^{2}} 
        + \mathcal{O}(\eps^3), 
    \]
    Since $|z + f - z| = \mathcal{O}(\eps)$ and $U_2$ is smooth, we have that
    \[ \frac{U_2((z + f - g^+)/|x + f - g^+|)}{|z + f - g^+|^{2}} - 
       \frac{U_2((z - g^+)/|z - g^+|)}{|z - g^+|^{2}} 
       = \mathcal{O}(\eps). \]
    Hence we have that 
    \eqnsplst{
        \partial_f^{(2)} A_\epsilon(g^+, z) 
        &= A(\epsilon^{-1} g^+, \epsilon^{-1} (z + f)) 
        - A(\epsilon^{-1} g^+, \epsilon^{-1} z) \\
        &= \frac{c_0}{\pi} \left( \log \left| \frac{z + f - g^+}{\eps} \right| 
          - \log \left| \frac{z - g^+}{\eps} \right| \right) + \mathcal{O}(\epsilon^3). }
    Writing $w = z - g^+$ and regarding $f$ and $w$
    as complex numbers, as \(\epsilon \rightarrow 0\) we have that
    \eqnsplst{
        \log |w + f| - \log|w| 
        &= \log | 1 + f w^{-1}| 
        = \log | 1 + \eps \hat{f} (z-g^+)^{-1} | \\
        &= \frac{1}{2} \log \left( 1 + 2 \eps \Re( \hat{f} (z-g^+)^{-1} ) 
           + \eps^{2} |\hat{f} (z-g^+)^{-1}|^2 \right) \\
        &= - \eps \Re( \hat{f} (g^+-z)^{-1} ) + \eps^{2} a(z,g^+,\hat{f}) 
           + \mathcal{O}(\eps^{3}), }
    where 
    \[ a(z,g^+,\hat{f})
       = |\hat{f} (z-g^+)^{-1}|^2 - 2 \Re( \hat{f} (z-g^+)^{-1} ). \]
    It follows that 
    \eqnsplst{
        &\sum_{g \in \partial U_\epsilon} \partial_e^{(1)} H_{\eps}(z,g) \partial_f^{(2)} A_\epsilon(g^+,z) \\
        &\quad = - \eps \frac{c_0}{\pi} \sum_{g \in \partial U_\epsilon} \Re \left( \frac{\hat{f}}{g^+-z} \right) 
        \partial_e^{(1)} H_{\epsilon}(z,g)  
           + \eps^2 \frac{c_0}{\pi} \sum_{g \in \partial U_\epsilon} a(z,g^+,\hat{f})
             \partial_e^{(1)} H_{U_\epsilon}(z,g) + \mathcal{O}(\eps^{3}). }
    Recalling the remark made after \eqref{e:Jef-def}, and 
    applying Theorem \ref{thm2_3}, the first term on the right hand side equals
    \[ - \eps^2 \frac{c_0}{\pi} J_{e,f} + \mathcal{O}(\eps^{3}). \]
    By the same theorem, the second term is $\mathcal{O}(\eps^{3})$, 
    and the proposition follows.
\end{proof}

\subsection{Computation of the determinant}
\label{ssec:comp-det}

Throughout this section, we fix $z \in U$ and denote by $O$ the rotation about $z_\eps$ 
by an angle $2\pi/q$ that leaves $\cG$ invariant.
We need some simple properties of the quantity $J_{e, f}(z)$ arising from the fact that 
it is bilinear in $e, f$, 
when $e, f$ are viewed as vectors in $\R^2 \equiv \C$. Let us write 
$\mathbf{1}, \mathbf{i} \in \C$, and define
\eqnsplst{
 J_{11} &= J_{\mathbf{1}, \mathbf{1}} \qquad\qquad J_{22} = J_{\mathbf{i}, \mathbf{i}} \\
 J_{12} &= J_{\mathbf{1}, \mathbf{i}} \qquad\qquad J_{21} = J_{\mathbf{i}, \mathbf{1}}. }
 
\begin{lemma}
\label{lem:Javrg}
We have
\eqnsplst{
 \frac{1}{q} \sum_{r=0}^{q-1} J_{O^r e, O^r f} 
 = \frac{1}{2} \langle e, f \rangle (J_{11} + J_{22})
   + \frac{1}{2} (e_1 f_2 - e_2 f_1) (J_{12} - J_{21})
 = \frac{1}{2} \langle e, f \rangle (J_{11} + J_{22}), }
where $\langle e, f \rangle = e_1 f_1 + e_2 f_2$, with $e = e_1 \mathbf{1} + e_2 \mathbf{i}$ (and similarly 
for $f$).
\end{lemma}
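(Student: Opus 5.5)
The plan is to reduce the lemma to a linear-algebra identity about averaging a $2\times 2$ matrix over conjugation by a rotation.

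\textbf{Step 1: matrix form.} By \eqref{e:Jef-def}, $J_{e,f}$ is bilinear in the vectors $\hat e,\hat f\in\R^2\cong\C$. So I write $J_{e,f}=e^{T}Jf$, where $J=(J_{ij})$ is the $2\times2$ matrix with entries $J_{11},J_{12},J_{21},J_{22}$ as defined before the lemma. Since $O$ is a rotation about $z_\eps$, it maps edges incident with $z_\eps$ to edges incident with $z_\eps$. As vectors, $O^re=R^re$, where $R$ is the rotation matrix by angle $\theta=2\pi/q$. The left-hand side therefore equals $e^{T}\bar Jf$, with
\[ \bar J=\frac1q\sum_{r=0}^{q-1}(R^{r})^{T}JR^{r}. \]

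\textbf{Step 2: decompose $J$ by how it transforms under conjugation.} Let $R_{\pi/2}=\begin{pmatrix}0&-1\\1&0\end{pmatrix}$. I split
\[ J=aI+bR_{\pi/2}+S, \qquad a=\tfrac12(J_{11}+J_{22}),\quad b=\tfrac12(J_{21}-J_{12}), \]
where $S$ is symmetric and traceless. Both $I$ and $R_{\pi/2}$ commute with every rotation, so they are fixed by conjugation. The traceless symmetric matrices form the span of $\begin{pmatrix}1&0\\0&-1\end{pmatrix}$ and $\begin{pmatrix}0&1\\1&0\end{pmatrix}$. Identified with a complex number $s=S_{11}+iS_{12}$, conjugation by $R^{r}$ multiplies $s$ by $e^{\pm 2ir\theta}$; a direct $2\times2$ check fixes the sign, which does not matter below.

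\textbf{Step 3: the average kills $S$.} The conjugation average of $S$ is governed by $\frac1q\sum_{r=0}^{q-1}e^{\pm 4\pi i r/q}$. This geometric sum vanishes exactly when $e^{4\pi i/q}\neq1$, that is, when $q\nmid 2$. This holds since $q\ge3$ by Assumption \ref{a:graph}(ii), and it is the only place the rotational symmetry is used. Hence $\bar J=aI+bR_{\pi/2}$.

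\textbf{Step 4: read off both equalities.} We get
\[ e^{T}\bar Jf=a\langle e,f\rangle+b\,(e_2f_1-e_1f_2)=\tfrac12\langle e,f\rangle(J_{11}+J_{22})+\tfrac12(e_1f_2-e_2f_1)(J_{12}-J_{21}), \]
which is the first equality. The second equality follows because $J_{12}=J_{21}$, by the symmetry $J_{e,f}=J_{f,e}$ noted after \eqref{e:Jef-def} (a consequence of $g_U(z,w)=g_U(w,z)$).

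The main obstacle is only Step 3, and even that is routine. The care needed is in tracking signs and confirming that the traceless part rotates at twice the angle, which is what makes $q\ge3$ (rather than $q\ge2$) the right hypothesis.
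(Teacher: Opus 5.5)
Your proof is correct, but it organizes the computation differently from the paper. The paper works directly in coordinates. It expands $J_{O^re,O^rf}$ in terms of $J_{11},J_{22},J_{12},J_{21}$ and computes the averaged coefficient of each of the four entries separately, using $\sum_r\cos^2(2\pi r/q)=\sum_r\sin^2(2\pi r/q)=q/2$ and $\sum_r\sin(4\pi r/q)=0$.

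You instead write $J_{e,f}=e^{T}Jf$, so the rotational average becomes $e^{T}\bar J f$ with $\bar J$ the conjugation average of $J$. You then split $J$ into the rotation-invariant pieces $aI+bR_{\pi/2}$ and a traceless symmetric part $S$ that transforms at twice the rotation angle. The average kills $S$ because $\sum_{r}e^{\pm 4\pi i r/q}=0$ for $q\ge 3$.

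The underlying fact is the same in both arguments. The paper's trigonometric identities are equivalent to the vanishing of the real and imaginary parts of this frequency-two sum, so neither route proves anything stronger. Your decomposition removes the sign bookkeeping for four separate coefficients. It also shows directly why $q\ge3$, and not $q\ge2$, is the needed hypothesis. The paper's version is fully explicit and needs no identification of invariant subspaces.

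One small point: the identity is purely algebraic and holds for arbitrary vectors $e,f$. So your remark in Step 1 that $O$ maps edges at $z_\eps$ to edges at $z_\eps$ is not needed for this lemma; it matters only where the lemma is applied.
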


\begin{proof}
The second equality is immediate from $J_{12} = J_{21}$. To see the first,
from the definition of $J_{e,f}$ in \eqref{e:Jef-def} we have
\eqnspl{e:Jef}
{ J_{e, f} 
  = e_1 f_1 J_{11} + e_2 f_2 J_{22} + e_1 f_2 J_{12} + e_2 f_1 J_{21}. }
Due to \eqref{e:Jef} we get
\eqnst
{ J_{O^r e, O^r f}
  = (O^r e)_1 (O^r f)_1 J_{11} + (O^r e)_2 (O^r f)_2 J_{22} 
    + (O^r e)_1 (O^r f)_2 J_{12} + (O^r e)_2 (O^r f)_1 J_{21}. }
Summing the coefficient of $J_{11}$ over $r$ we get 
\eqnsplst{
 &\sum_{r=0}^{q-1} (O^r e)_1 (O^r f)_1 \\
 &\quad= \sum_{r=0}^{q-1} \left[ \cos \left( \frac{r 2 \pi}{q} \right) e_1 
   - \sin \left( \frac{r 2 \pi}{q} \right) e_2 \right]
   \left[ \cos \left( \frac{r 2 \pi}{q} \right) f_1 
   - \sin \left( \frac{r 2 \pi}{q} \right) f_2 \right] \\
 &\quad= \sum_{r=0}^{q-1} \left[ \cos^2 \left( \frac{r 2 \pi}{q}\right) e_1 f_1
   + \sin^2 \left( \frac{r 2 \pi}{q}\right) e_2 f_2 
   - \cos \left( \frac{r 2 \pi}{q} \right) \sin \left( \frac{r 2 \pi}{q} \right) (e_2 f_1 + e_1 f_2) \right]. }
Using that when $q \ge 3$, we have
\eqnsplst{
  \sum_{r=0}^{q-1} \cos^2 \left( \frac{r 2 \pi}{q}\right)
  = \frac{q}{2}, \qquad\quad
  \sum_{r=0}^{q-1} \sin^2 \left( \frac{r 2 \pi}{q}\right)
  = \frac{q}{2}, \qquad\quad
  \sum_{r=0}^{q-1} \frac{1}{2} \sin \left( \frac{r 4 \pi}{q}\right)
  = 0, }
we get that the coefficient of $J_{11}$ equals
\eqnsplst{
 \frac{q}{2} (e_1 f_1 + e_2 f_2) 
 = \frac{q}{2} \langle e,f \rangle. }
An analogous computation for the coefficient of $J_{22}$ gives the same expression.

For $J_{12}$, we compute 
\eqnsplst{
  &\sum_{r=0}^{q-1} (O^r e)_1 (O^r f)_2 \\
  &\quad = \sum_{r=0}^{q-1} \left[ \cos \left( \frac{r 2 \pi}{q} \right) e_1 
   - \sin \left( \frac{r 2 \pi}{q} \right) e_2 \right]
   \left[ \sin \left( \frac{r 2 \pi}{q} \right) f_1 
   + \cos \left( \frac{r 2 \pi}{q} \right) f_2 \right] \\
  &\quad= \sum_{r=0}^{q-1} \left[ \cos^2 \left( \frac{r 2 \pi}{q}\right) e_1 f_2
   - \sin^2 \left( \frac{r 2 \pi}{q}\right) e_2 f_1 
   + \cos \left( \frac{r 2 \pi}{q} \right) \sin \left( \frac{r 2 \pi}{q} \right) (e_1 f_1 - e_2 f_2) \right] \\
  &\quad= \frac{q}{2} (e_1 f_2 - e_2 f_1). }
We get the negative of the same expression for the coefficient for $J_{21}$, and this completes the proof.
\end{proof}

\begin{proof}[Proof of Theorem \ref{thm:scaling-form}(i) under Assumption \ref{a:U}(i).]
Recall from Lemma \ref{lem:MD-trick}(ii) that
\[ p_{U_\eps}(0;z)
   = \det(I - Y^{(g)}_{\eps}), \]
where $g$ is an edge incident with $z_\eps$, $\cN^{(g)}$ denotes the set of edges incident with $z_\eps$
in $U_\eps$ other than $g$, and 
\[ Y^{(g)}_\eps(e,f)
   = Y_\eps(e,f), \quad e, f \in \cN^{(g)}. \]
Write $\cN$ for the set of edges incident with $z_\eps$ in $U_\eps$. It will be convenient
to order the set $\{ g \} \cup \mathcal{N}$ in such a way that its cyclic order remains
invariant under the action of the rotation $O$. (For example, by increasing argument of the
endpoint.) Let $M^{(g)}_{e,f}$ denote the minor of $(I - Y)_{e, f \in \cN}$ with 
the rows $g,e$ and columns $g,f$ deleted. Then due to Proposition \ref{prop:Y-asymp} we can write
\eqnsplst{
 p_{U_\eps}(0;z)
 = p(0) + \eps^2 \frac{c_0}{\pi} \sum_{e,f \not= g} 
   J_{e,f}(z) (-1)^{n(e)+n(f)} M^{(g)}_{e,f} + \mathcal{O}(\eps^3), }
where $n(e), n(f)$ denotes the indices of $e, f$, respectively, in the fixed ordering.
Since the choice of $g$ is free, we can also write
\begin{equation*}  
\begin{split}
 p_{U_\eps}(0;z)
 &= \frac{1}{q} \sum_{r=0}^{q-1} \det(I - Y^{(O^r g)}_{\eps}) \\
 &= p(0) + \eps^2 \frac{c_0}{\pi} \frac{1}{q} \sum_{r=0}^{q-1} \sum_{e,f \not= g} 
   J_{O^r e, O^r f} (-1)^{n(O^r e) + n(O^r f)} M^{(O^r g)}_{O^r e, O^r f}
   + \mathcal{O}(\eps^3). 
\end{split}
\end{equation*}
It is easy to see that $(-1)^{n(O^r e) + n(O^r f)} = (-1)^{n(e) + n(f)}$, and 
by rotation invariance of the lattice we also have 
$M^{(O^r g)}_{O^r e, O^r f} = M^{(g)}_{e,f}$ (as the matrices differ by a cyclic permutation 
of both the rows and columns). This gives:
\eqnspl{pcorr}
{ &\frac{1}{q} \sum_{r=0}^{q-1} \sum_{e,f \not= g} 
     J_{O^r e, O^r f} (-1)^{n(O^r e) + n(O^r f)} M^{(O^r g)}_{O^r e, O^r f} \\
  &\qquad = \sum_{e,f \not= g} (-1)^{n(e) + n(f)} M^g_{e,f} 
     \left[ \frac{1}{q} \sum_{r=0}^{q-1} J_{O^r e, O^r f} \right]. }
Adding the $J_{11}$ and $J_{22}$ contributions from Lemma \ref{lem:Javrg}, and substituting 
into \eqref{pcorr}, we have the expression
\eqnspl{e:c_0c_1}{
  \frac{c_0}{\pi} (J_{11} + J_{22}) \frac{1}{2} 
     \sum_{e,f \not= g} (-1)^{n(e)+n(f)} M^{(g)}_{e,f} \langle e,f \rangle
  =: \frac{c_0}{\pi} c_1 (J_{11} + J_{22}). } 
Putting $f_U = (J_{11} + J_{22})$ and $c_\cG := c_0 c_1 / \pi$ completes the proof.
\end{proof}

\begin{remark}
When the boundary of $U$ is smooth, we have the following explicit formula for $f_U$:
\eqn{e:fU-formula}{
   f_U(z)
   = J_{11}+J_{22}
   = \int_{\partial U} \left[\Re\left(\frac{1}{w-z}\right) \frac{\partial h_U(z,w)}{\partial x}-\Im\left(\frac{1}{w-z}\right) \frac{\partial h_U(z,w)}{\partial y}\right] d\sigma(w), }
where $z = (x,y)$, $\sigma$ is the length measure on $\partial U$ and $h_U$ is the density of 
harmonic measure with respect to $\sigma$.
\end{remark}

\section{Unbounded regions}
\label{sec:unbounded}

In this section we prove Theorem \ref{thm:scaling-form}(i) for unbounded $U$, under Assumption
\ref{a:U}(ii).

We start with showing that $p_{U,\eps}(0;z)$ is well defined as a limit obtained from
finite subgraphs of $U_\eps$. For $R \ge 1$, consider the bounded region $U_R$ defined
as the connected component of $U \cap B_R(z)$ that contains $z$, and let $U_{R,\eps}$
denote its lattice approximation.

\begin{lemma}
\label{lem:limit-unbounded}
For any $\eps > 0$, we have
\eqn{e:limit-unbounded}
{ p_{U,\eps}(0;z)
  = \lim_{R \to \infty} p_{U_R,\eps}(0;z). }
\end{lemma}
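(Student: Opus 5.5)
The plan is to reduce the statement to convergence of finitely many Green function entries, using Lemma \ref{lem:MD-trick}(ii). Fix $\eps>0$ and $z$, and work on the unscaled lattice, with $\Lambda_R := \eps^{-1} U_{R,\eps}$ and $\Lambda := \eps^{-1}U_\eps$. For each finite $R$ we have
\[ p_{U_R,\eps}(0;z) = \det\big(I - Y^{(g)}_{\Lambda_R}\big), \]
and $Y^{(g)}_{\Lambda_R}$ is a matrix of fixed finite size, indexed by the edges incident with $z_\eps$ (for $R$ large these are the same edges as in $U_\eps$). Its entries are the combinations $\partial^1_e\partial^2_f G_{\Lambda_R}(x,y)$ of Green function values at the boundedly many points $z_\eps$ and its neighbours. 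Since the determinant is a polynomial in the entries, it suffices to show that $G_{\Lambda_R}(v,w) \to G_\Lambda(v,w) < \infty$ for fixed $v,w$. We then take the right-hand side of \eqref{e:limit-unbounded} as the meaning of $p_{U,\eps}(0;z)$; it equals $\det(I - Y^{(g)}_\Lambda)$, i.e.~the corresponding wired UST probability on the infinite graph.

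\emph{Step 1: monotonicity.} The sets $U_R$ increase with $R$. Since $U$ is connected, every point of $U$ is joined to $z$ by a compact path in $U$, so $\bigcup_R U_R = U$. For $R$ large, any edge of $\eps\cE$ near a given bounded set is interior to $\Lambda_R$ exactly when it is interior to $\Lambda$; edges crossing $\partial B_R(z)$ are merely extra boundary edges. Hence the exit times $\tau_{\Lambda_R}$ are nondecreasing in $R$ and converge to $\tau_\Lambda$ pathwise. By monotone convergence in the random walk representation,
\[ G_{\Lambda_R}(v,w) = \deg(w)^{-1}\,\E^v\Big[\sum_{n<\tau_{\Lambda_R}} \mathbf 1_{S_n=w}\Big] \]
increases to $G_\Lambda(v,w)$.

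\emph{Step 2: finiteness of $G_\Lambda$.} This is the main obstacle, since the walk on the recurrent two-dimensional lattice could a priori accumulate infinitely many visits before exiting. Under Assumption \ref{a:U}(ii), $U^c$ has an unbounded connected component $K$. I would first try a Beurling-type estimate. For every $r$ larger than some $r_0$, $K$ contains a connected set crossing the annulus $B_{2r}\setminus B_r$ (centred at $z$). By the invariance principle for the walk on $\cG$, whose scaling limit is a nondegenerate Brownian motion by Proposition \ref{prop:A-asymp}, together with the fact that planar Brownian motion hits such a crossing with probability bounded below, the following holds uniformly in $r \ge r_0$ and in starting points in $B_{2r}$: with probability at least $c>0$ the walk crosses an edge of $\partial\Lambda$ before leaving $B_{4r}$. (On the lattice, any path crossing the annulus in $U^c$ is intersected by the lattice edges the walk uses.) Combined with recurrence, which ensures that the walk keeps returning to $w$ only while it has not exited, this gives $\Pr^w[\text{return to } w \text{ before } \tau_\Lambda] < 1$. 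Indeed, each excursion reaching distance $r_0$ exits with probability bounded below, and from a bounded region the walk reaches distance $r_0$ with positive probability. Therefore the number of visits to $w$ before $\tau_\Lambda$ is geometric, and $G_\Lambda(w,w)<\infty$. Finally, $G_\Lambda(v,w)\le G_\Lambda(w,w)$.

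\emph{Step 3: conclusion.} With Steps 1--2, every entry of $Y^{(g)}_{\Lambda_R}$ converges to the corresponding finite entry of $Y^{(g)}_\Lambda$, so the determinants converge. This proves \eqref{e:limit-unbounded}. The case of bounded $U$ is trivial, since $U_R = U$ for all $R$ large.
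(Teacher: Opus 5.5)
Your proposal is correct and follows essentially the paper's route: use the Majumdar--Dhar determinant formula to reduce to finitely many Green function entries at $z_\eps$ and its neighbours, then use $G_{\Lambda_R}\uparrow G_\Lambda<\infty$. The paper uses the form $\det(I+G_\Lambda B)$ rather than the transfer-current form, which makes no difference. Your Step 2 is heavier than needed, and as written ``crosses $\partial\Lambda$ before leaving $B_{4r}$'' does not directly give ``before returning to $w$''; it only gives $\tau_\Lambda<\infty$ a.s., and the return bound then follows because infinitely many returns would force $\tau_\Lambda=\infty$. More simply, at fixed $\eps$ it suffices that the walk from $w$ can reach an edge of $\partial\Lambda$ through interior edges without revisiting $w$, which holds because the unbounded connected subset of $U^c$ must meet some lattice edge; the number of visits to $w$ is then geometric.
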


\begin{proof}
The statement follows immediately from the formula \eqref{e:MD-trick} applied with 
$\Lambda = U_{R,\eps}$ and the convergence 
$\lim_{R \to \infty} G_{U_{R,\eps}}(u,v) = G_{U_\eps}(u,v) < \infty$, where $u,v$ 
range over $z_\eps$ and its neighbours. Here $G_{U_\eps}$ is the Green kernel of 
simple random walk on $\eps \cV$ killed on its first crossing of $\partial U_\eps$. 
This is finite due to part of Assumption \ref{a:U} concerning $\partial U$.
\end{proof}

\begin{proof}[Proof of Theorem \ref{thm:scaling-form}(i) under Assumption \ref{a:U}(ii).]
Applying the already proved bounded case to $U_R$, we have
\eqn{e:bounded-asymp-formula}
{ p_{U_R,\eps}(0;z)
  = p(0) + c_\cG f_{U_R}(z) \eps^2 + \mathcal{O}_R(\eps^3), \quad \text{as $\eps \to 0$.} }
We now argue that $\lim_{R \to \infty} f_{U_R}(z) = f_U(z)$, where $f_U(z)$ is
defined by the integral in \eqref{e:fU-formula}, and that the $\mathcal{O}_R(\eps^3)$
error term is in fact uniform in $R$. 

For the convergence of $f_{U_R}$, we use a representation in terms of planar Brownian motion. 
Let $r(z) = \inf \{ |y - z| : y \in \partial U \}$. Define
the function
\[ F_U(z) 
   = \E^z \left[ \Re\left(\frac{1}{B_{\xi_{U}}-z}\right) \right], \]
where $B$ is a standard planar Brownian motion, and where
$\xi_U = \inf \{ t \ge 0 : B_t \not\in U \}$. Together with the analogous formula 
for $F_{U_R}$, we have 
\[ F_U(z) - F_{U_R}(z)
   = \E^z \left[ \Re\left(\frac{1}{B_{\xi_{U}}-z}\right) 
     - \Re\left(\frac{1}{B_{\xi_{U_R}}-z}\right) \right]. \]
As a consequence of the mean-value property of harmonic 
functions \cite[Theorem 2.2.7]{Evans-book} we have that
\[ \left| \frac{\partial F_U}{\partial x} - \frac{\partial F_{U_R}}{\partial x} \right| 
   \le \frac{C}{r(z)} \max_{w : |w - z| = r(z)/2} |F_U(w) - F_{U_R}(w)|, \]
where $x = (x,y)$. Now we have 
\[ |F_U(w) - F_{U_R}(w)|
   \le \left( \max_{v \in \partial U \Delta \partial U_R} \Re\left(\frac{1}{v-w}\right) \right) 
       \mathbf{P}^w \left[ \xi_U \not= \xi_{U_R} \right] 
   \le \frac{C}{r(z)} \mathbf{P}^w \left[ \xi_U \not= \xi_{U_R} \right]. \]
We show that the last probability goes to $0$, as $R \to \infty$, uniformly in
$w$ such that $|w - z| = r(z)/2$. From Assumption \ref{a:U} we have that
we can find a curve $\gamma \subset U^c$ with $\mathrm{diam}(\gamma) > 0$, 
such that for all sufficiently large $R$, we have $\gamma \subset U_R^c$ as well. 
The probability that $B$ does not visit $\gamma$ before exiting $B_R(z)$ goes to $0$ 
(uniformly over $w$), and from this the statement follows. Thus we have 
$\frac{\partial F_{U_R}}{\partial x} \to \frac{\partial F_{U}}{\partial x}$.

A very similar argument can be applied with $\Re\left(\frac{1}{v-z}\right)$ replaced by
$\Im\left(\frac{1}{v-z}\right)$ and $\frac{\partial}{\partial x}$ replaced by 
$\frac{\partial}{\partial y}$, and it follows that $f_{U_R}(z) \to f_U(z)$.

Uniformity of the error term in $R$ follows from the uniform boundedness of the boundary data $1/(v-z)$.
\end{proof}

\section{Conformal covariance}
\label{sec:conformal}

In this section, we prove that the formula obtained in Section \ref{ssec:comp-det} is 
conformally covariant with scale dimension 2.

\subsection{Transformation formula}
\label{ssec:conf}

\begin{theorem}
\label{thm3_6}
    Let \(U, V \subsetneq \mathbb{C}\) be regions satisfying Assumption \ref{a:U}, and assume that
    $\varphi : U \to V$ is a conformal map. Let $z \in U$ and let $w := \varphi(z) \in V$. 
It holds that 
\eqnst{
 f_U(z)
 = |\varphi'(z)|^2 f_V(\varphi(z)). }
\end{theorem}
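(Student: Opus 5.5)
We use the Green-function form of $f_U$. From the definition \eqref{e:Jef-def} and the proof of Theorem \ref{thm:scaling-form}(i),
\[ f_U(z) = J_{11}+J_{22} = -\left(\partial^{(1)}_x\partial^{(2)}_x + \partial^{(1)}_y\partial^{(2)}_y\right) k_U(z,z), \qquad k_U(z,w)=g_U(z,w)+\log|z-w|. \]
(The sign convention is fixed by \eqref{e:Jef-def}; the Remark quotes the same expression up to sign. Covariance is insensitive to the overall sign, so we keep the definition.) For a function $h(z,w)$ harmonic in each variable, with $\partial^{(j)} = \frac12(\partial_x^{(j)} - i\partial_y^{(j)})$, one has
\[ \left(\partial^{(1)}_x\partial^{(2)}_x+\partial^{(1)}_y\partial^{(2)}_y\right)h = 2\left(\partial^{(1)}\bar\partial^{(2)}+\bar\partial^{(1)}\partial^{(2)}\right)h = 4\,\Re\left(\partial^{(1)}\bar\partial^{(2)}h\right), \]
the last step because $h$ is real. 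So it suffices to show that $\partial^{(1)}\bar\partial^{(2)} k$ transforms with the factor $|\varphi'|^2$ on the diagonal.

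\textbf{Step 1: conformal invariance of the Green function.} We use $g_U(z,w)=g_V(\varphi(z),\varphi(w))$. This holds because $w\mapsto g_V(\varphi(z),\varphi(w))$ is harmonic in $U\setminus\{z\}$. Near $z$ it equals $-\log|\varphi(w)-\varphi(z)|$ plus a harmonic function, which is $-\log|w-z|$ plus a harmonic function since $\varphi$ is injective with $\varphi'(z)\ne0$. It has zero boundary values, because conformal maps send boundary to boundary in the sense that $\varphi(w)\to\partial_\infty V$ as $w\to\partial_\infty U$. Uniqueness for the Dirichlet problem (Theorem \ref{thm2_1}) then gives the identity. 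For unbounded regions, we either use the Brownian-motion characterisation of $g$ together with conformal invariance of Brownian motion up to time change, or approximate by $U_R$ as in Section \ref{sec:unbounded}.

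\textbf{Step 2: expand the correction term.} From Step 1,
\[ k_U(z,w)=k_V(\varphi(z),\varphi(w)) + \log\left|\frac{z-w}{\varphi(z)-\varphi(w)}\right|. \]
The function $L(z,w)=\log\frac{\varphi(z)-\varphi(w)}{z-w}$ is holomorphic in both variables near the diagonal and nonvanishing inside the logarithm. Hence the last term, $-\Re L(z,w)$, is a sum of a holomorphic and an antiholomorphic function of $(z,w)$, jointly. Therefore $\partial^{(1)}\bar\partial^{(2)}$ annihilates it, since each piece depends only on $(z,w)$ or only on $(\bar z,\bar w)$. By the chain rule for holomorphic $\varphi$, applied to the first term,
\[ \partial^{(1)}\bar\partial^{(2)}\left[k_V(\varphi(z),\varphi(w))\right] = \varphi'(z)\overline{\varphi'(w)}\,(\partial^{(1)}\bar\partial^{(2)}k_V)(\varphi(z),\varphi(w)). \]
Setting $w=z$ gives $|\varphi'(z)|^2$ times the corresponding quantity for $V$. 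Taking real parts and using the reduction above yields $f_U(z)=|\varphi'(z)|^2f_V(\varphi(z))$.

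\textbf{Main obstacle.} The step needing the most care is the justification near the diagonal. We must ensure $k_U$ is smooth (harmonic) across $z=w$, so that mixed derivatives at $(z,z)$ make sense. We must also check that the logarithmic correction really extends holomorphically with $L$ well defined there, which follows from $(\varphi(z)-\varphi(w))/(z-w)\to\varphi'(z)\neq0$. The remaining work is to make Step 1 rigorous for the general (possibly unbounded, non-simply connected) regions of Assumption \ref{a:U}; there the boundary correspondence should be phrased via Brownian exit, since $\varphi$ need not extend continuously to the boundary.
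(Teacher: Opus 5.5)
Your proposal is correct and follows essentially the paper's route. Like the paper, you use conformal invariance of the Green function to write $k_U(z,w)=k_V(\varphi(z),\varphi(w))+\log|(z-w)/(\varphi(z)-\varphi(w))|$, annihilate the logarithmic term with a mixed Wirtinger derivative, and apply the chain rule on the diagonal. Your reduction $f_U(z)=-4\,\Re\bigl(\partial^{(1)}\bar\partial^{(2)}k_U\bigr)(z,z)$, together with the factor $\varphi'(z)\overline{\varphi'(w)}$, is a tidier and more carefully normalized version of the paper's real-coordinate chain-rule step.
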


\begin{proof}
Conformal invariance of the Green function \cite[Theorem X.5.3]{Conway-book-I} gives that 
\eqnsplst{ 
  k_U(z,w)
  &= g_U(z,w) + \log |z - w|
  = g_V(\phi(z),\phi(w)) + \log |z - w| \\
  &= k_V(\phi(z),\phi(w)) - \log \left| \frac{\phi(z) - \phi(w)}{z-w} \right|. }
Denote $F(z,w) = (\phi(z) - \phi(w))/(z-w)$, which is holomorphic in the $z$-variable
in $U$, and non-vanishing. This implies that $\bar{\partial}^{(1)} [ \log F ] = 0$, 
and hence
\[ \bar{\partial}^{(1)} \partial^{(2)} \log |F(z,w)|
   = \bar{\partial}^{(1)} \left[ \partial^{(2)} \Re \log F(z,w) \right] 
   = \bar{\partial}^{(1)} \left[ i \partial^{(2)} \Im \log F(z,w) \right], \]
where we use the Cauchy-Riemann equations in the last step. From the last equality 
it follows that 
\[ \bar{\partial}^{(1)} \partial^{(2)} \log |F(z,w)|
   = \frac{1}{2} \bar{\partial}^{(1)} \partial^{(2)} \left[ \log F(z,w) \right]
   = \frac{1}{2} \partial^{(2)} \left[ \bar{\partial}^{(1)} \log F(z,w) \right]
   = 0. \]
It remains to compute $\bar{\partial}^{(1)} \partial^{(2)} k_V( \phi(z), \phi(w) )$. 
Denote by $\partial^{(1)}_{Re}$ the partial derivative of $k_V$ in the real part of the 
first variable, and similarly write $\partial^{(1)}_{Im}$, $\partial^{(2)}_{Re}$, $\partial^{(2)}_{Im}$ for the other partial derivatives. Then we have
\eqnsplst{
  &\bar{\partial}^{(1)} \partial^{(2)} k_V (\phi(z), \phi(w)) \\
  &\qquad = \left( \partial^{(1)}_{Re} \partial^{(2)}_{Re} k_V \right) 
     \left( \bar{\partial} \Re \phi(z) \right) \left( \partial \Re \phi(w) \right) 
     + \left( \partial^{(1)}_{Im} \partial^{(2)}_{Im} k_V \right) 
     \left( \bar{\partial} \Im \phi(z) \right) \left( \partial \Im \phi(w) \right). }
Here setting $w = z$ gives
\[ \left( \bar{\partial} \Re \phi(z) \right) \left( \partial \Re \phi(z) \right)
   = \left( \partial_x \Re \phi(z) \right)^2 + \left( \partial_y \Re \phi(z) \right)^2
   = \left| \phi'(z) \right|^2, \]
and likewise
\[ \left( \bar{\partial} \Im \phi(z) \right) \left( \partial \Im \phi(z) \right)
   = \left( \partial_x \Im \phi(z) \right)^2 + \left( \partial_y \Im \phi(z) \right)^2
   = \left| \phi'(z) \right|^2. \]
Noting that 
\[ \left( \partial^{(1)}_{Re} \partial^{(2)}_{Re} k_V \right) 
   + \left( \partial^{(1)}_{Im} \partial^{(2)}_{Im} k_V \right) 
   = - f_V(\phi(z)) \]
completes the proof.
\end{proof}

\subsection{Conformal radius formula}

We recall the definition of the \textbf{conformal radius} of a simply connected region \(U\subset\mathbb{C}\).
\begin{definition}\label{def3_1}
    Let \(U\subset\mathbb{C}\) be a simply connected region. Let \(z\in U\) and let \(\varphi : U \rightarrow \mathbb{D}\) be the unique conformal mapping to the unit disk \(\mathbb{D}\coloneqq \{w \in \mathbb{C} : |w| < 1\}\) satisfying \(\varphi(z) = 0\), \(\varphi'(z) > 0\). The conformal radius of \(U\) viewed from \(z\) is defined to be 
    \[
        \mathrm{rad}_U(z) = \frac{1}{\varphi'(z)}.
    \]
\end{definition}

A corollary of Theorem \ref{thm3_6} is an extension to the lattices we consider of the 
formula of \cite[Example 4.10]{A-CR25}.

\begin{corollary}\label{cor3_7}
    Let \(U\subset\mathbb{C}\) be a simply connected region satisfying Assumption \ref{a:U}. Let \(z\in U\).  
Then 
    \[  f_U(z)
        = \frac{2}{(\mathrm{rad}_U(z))^2}.   \]
\end{corollary}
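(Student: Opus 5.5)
The plan is to reduce to the unit disc using Theorem \ref{thm3_6} and then compute $f_{\D}(0)$ directly. Let $\varphi : U \to \D$ be the Riemann map with $\varphi(z) = 0$ and $\varphi'(z) > 0$; it exists since $U \subsetneq \C$ is simply connected. The disc satisfies Assumption \ref{a:U}(i), so Theorem \ref{thm3_6} gives
\[ f_U(z) = |\varphi'(z)|^2 f_{\D}(0) = \frac{f_{\D}(0)}{\mathrm{rad}_U(z)^2}. \]
It therefore suffices to show that $f_{\D}(0) = 2$.

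For the disc the Green function is explicit:
\[ g_{\D}(z,w) = -\log|z-w| + \log|1 - \bar{w} z|, \]
using the normalisation in which $k_U = g_U + \log|z-w|$ is harmonic. Hence $k_{\D}(z,w) = \log|1 - \bar{w} z| = \Re \log(1 - \bar w z)$. By the definition \eqref{e:Jef-def} and Lemma \ref{lem:Javrg},
\[ f_{\D}(0) = J_{11} + J_{22} = -\left(\partial^{(1)}_x \partial^{(2)}_x + \partial^{(1)}_y \partial^{(2)}_y\right) k_{\D}(0,0). \]
Near the origin, expand $\log|1 - \bar w z| = -\Re(\bar w z) + O(|z|^2|w|^2)$. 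Writing $z = x_1 + i y_1$ and $w = x_2 + i y_2$, we have $\Re(\bar w z) = x_1 x_2 + y_1 y_2$. The mixed second derivatives at $(0,0)$ are then $\partial_{x_1}\partial_{x_2} k = -1$ and $\partial_{y_1}\partial_{y_2} k = -1$, while the cross terms vanish. This gives $J_{11} = J_{22} = 1$, so $f_{\D}(0) = 2$.

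The only delicate point is the sign and normalisation convention: whether $f$ is defined with the minus sign as in \eqref{e:Jef-def} or as $\bar\partial^{(1)}\partial^{(2)}k$ in the Remark. The two differ by a sign, and consistency with Theorem \ref{thm:scaling-form}(iv), which asserts $f_U > 0$, fixes it. As a cross-check, I would verify $J_{11} = 1$ from the boundary-value description after \eqref{e:Jef-def}. The function $z \mapsto (\mathbf{1}\cdot\nabla^{(2)})k_{\D}(z,0)$ is harmonic with boundary values $-\Re(1/z) = -\Re \bar z$ on $|z| = 1$, so it equals $-\Re z$. Differentiating in $x$ at $0$ gives $-1$, hence $J_{11} = 1$, as before.

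Combining these steps yields $f_U(z) = 2/\mathrm{rad}_U(z)^2$. This also matches \eqref{e:conf-rad}, since $|\phi_U'(z)| = 1/\mathrm{rad}_U(z)$ for any conformal map $\phi_U$ onto $\D$ sending $z$ to $0$.
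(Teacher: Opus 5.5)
Your proof is correct and follows the paper's argument. Like the paper, you reduce to showing $f_{\D}(0)=2$ via Theorem \ref{thm3_6}, use $k_{\D}(z,w)=\log|1-z\bar w|$, and compute the mixed derivatives directly, which gives $J_{11}=J_{22}=1$. Your sign convention, following \eqref{e:Jef-def} and the proof of Theorem \ref{thm3_6} rather than the Remark, is the right reading.
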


\begin{proof}
Due to Theorem \ref{thm3_6}, it is enough to show that $f_{\mathbb{D}}(o) = 2$.
Since $g_{\mathbb{D}}(z,w) = - \log ( |w - z| / |1 - z \bar{w}| )$, we have
$k_{\mathbb{D}}(z,w) = \log | 1 - z \bar{w} |$. A direct computation then yields
the result.
\end{proof}

\section{Positivity of $f_U$ and $c_G$}
\label{sec:positive}

Let $T_{U_\eps}$ denote the UST in $U_\eps$, and let $T$ denote the UST in $\cG$. 

\begin{lemma}
\label{lem:monotone} \ \\
(i) If $z \in U$, we have $f_U(z) \ge 0$. \\
(ii) If $z \in U \subset V$, we have $f_U(z) \ge f_V(z)$.
\end{lemma}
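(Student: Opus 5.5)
The plan is to prove both parts by a discrete comparison, using Rayleigh monotonicity on the transfer-current diagonal, and then to pass to the limit via Proposition \ref{prop:Y-asymp} and Lemma \ref{lem:Javrg}. The key identity is the case $e=f$ of Proposition \ref{prop:Y-asymp}:
\[ Y(\hat e,\hat e) - Y_\eps(e,e) = \eps^2 \frac{c_0}{\pi} J_{e,e} + O(\eps^3). \]
Here $Y(\hat e,\hat e) = \mathsf{UST}_\cG[\hat e \in T]$ and $Y_\eps(e,e) = \mathsf{UST}_{\cU_\eps}[e \in T_{U_\eps}]$. Each of these is the effective resistance between the endpoints of the edge, in $\cG$ and in the wired graph $\cU_\eps$ respectively. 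Note also that $c_0 = |\det M|/(Z_T\sigma^2) > 0$.

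\emph{Step 1: comparison of edge probabilities.} I will show that if $(\Lambda',\partial\Lambda')$ is obtained from $(\Lambda,\partial\Lambda)$ by ``wiring more'', then for every edge $e$ interior to both,
\[ Y_{\Lambda'}(e,e) \le Y_\Lambda(e,e). \]
The two cases I need are $\Lambda = \cG$ (the infinite limit, reached through exhaustions as in Lemma \ref{lem:limit-unbounded}) with $\Lambda' = \eps^{-1}U_\eps$, and $\Lambda = \eps^{-1}V_\eps$ with $\Lambda' = \eps^{-1}U_\eps$ when $U \subset V$. By Rayleigh monotonicity, identifying vertices (shorting) can only decrease effective resistances, and $\mathsf{UST}[e\in T] = R_{\mathrm{eff}}(e^-,e^+)$ for a unit-conductance edge.

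\emph{Step 2: limit.} Step 1 gives $\eps^2 \frac{c_0}{\pi} J_{e,e} \ge -O(\eps^3)$ in the first case. Dividing by $\eps^2$ and letting $\eps\to0$ yields $J_{e,e}\ge0$ for every lattice edge direction $e$ at $z$. Averaging over rotations with Lemma \ref{lem:Javrg} (taking $f=e$) gives $\frac1q\sum_r J_{O^re,O^re} = \frac12|e|^2 f_U(z)$, so $f_U(z)\ge0$, which is (i).

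For (ii), I take the difference of the two expansions from Proposition \ref{prop:Y-asymp}, for $U_\eps$ and for $V_\eps$ at the same $z_\eps$ and $e$. The $Y(\hat e,\hat e)$ terms cancel, leaving
\[ Y_{V,\eps}(e,e) - Y_{U,\eps}(e,e) = \eps^2\frac{c_0}{\pi}\bigl(J^U_{e,e} - J^V_{e,e}\bigr) + O(\eps^3). \]
Step 1 makes the left side nonnegative, so $J^U_{e,e} \ge J^V_{e,e}$. The same rotation averaging then gives $f_U(z)\ge f_V(z)$. For unbounded $U$ or $V$, I will either work with the truncations $U_R$ and use the convergence $f_{U_R}\to f_U$ from Section \ref{sec:unbounded}, or apply the comparison directly to the Green functions defined through exhaustion.

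\emph{Main obstacle.} The delicate point is justifying that $\cU_\eps$ really is obtained from $\cV_\eps$ (respectively from $\cG$) by shorting, given the paper's boundary convention. An edge of $\eps\cE$ crossing $\partial U$ with both endpoints in $U_\eps$, as happens at slits, is replaced by two separate edges $(e^-,s)$ and $(e^+,s)$ rather than by a contraction. I would handle this by subdividing such an edge of $V_\eps$ into two edges of conductance $2$ meeting at a new midpoint $m$, which does not change any effective resistance between original vertices. I would then short $m$ to $s$. What remains is to compare two conductance-$2$ half-edges with two conductance-$1$ edges to $s$; this needs a further monotonicity step, namely that lowering conductances to the sink can only increase $R_{\mathrm{eff}}$ across $e$. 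I expect this to require care: Rayleigh says decreasing conductances increases resistance, which goes the wrong way here. If it fails, I would fall back on the analytic route. There one writes $g_V - g_U = \E^z[g_V(B_{\xi_U},w)]$, which is a positive semidefinite kernel because energy is monotone under balayage. Applying that kernel to dipoles gives $\partial^{(1)}_e\partial^{(2)}_e(g_V-g_U)(z,z)\ge0$, which is exactly $J^U_{e,e}-J^V_{e,e}\ge0$. Part (i) would then follow by letting $V$ increase, using that $f_V(z)\to0$ as $\mathrm{dist}(z,\partial V)\to\infty$ by scaling.
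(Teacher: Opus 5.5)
Your main line is the paper's proof. The paper also combines monotonicity of UST edge probabilities with Proposition~\ref{prop:Y-asymp} and the rotation average of Lemma~\ref{lem:Javrg}. The only cosmetic difference is that the paper averages $\mathbf{P}[O^r e \in T_{U_\eps}]$ over $r$ before letting $\eps \to 0$, whereas you pass to the limit edge by edge; this is harmless, since $J_{e,e} \ge 0$ does hold for every single direction.

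Your worry about slit edges is justified, and the paper does not address it; it simply invokes graph monotonicity. Such edges occur not only at slits but wherever a lattice edge with both endpoints in $U$ cuts through $U^c$. Consider replacing such an edge $xy$, with $x, y \in U_\eps$, by two unit edges $xs$ and $ys$. Relative to the graph obtained by shorting $\eps\cV \setminus U_\eps$ to $s$, this changes the Dirichlet Laplacian by the indefinite matrix with entries $+1$ at $(x,y)$ and $(y,x)$. So neither Rayleigh monotonicity nor your midpoint subdivision (which overshoots to conductance $2$) yields $Y_\eps(e,e) \le Y(\hat e, \hat e)$ in that situation.

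Your analytic fallback is correct, and it is a genuinely different route that removes the problem altogether. For smooth compactly supported signed $\nu$ in $U$, the Dirichlet principle (via $C_c^\infty(U) \subset C_c^\infty(V)$) gives $\iint g_U \, d\nu \, d\nu \le \iint g_V \, d\nu \, d\nu$. Since $g_V - g_U = k_V - k_U$ is smooth near $(z,z)$, mollified dipoles in direction $e$ give $J^U_{e,e} \ge J^V_{e,e}$. Summing the directions $\mathbf{1}$ and $\mathbf{i}$ gives (ii), with no discretisation, error terms or boundary conventions involved.

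For (i), ``$f_V(z) \to 0$ by scaling'' works for discs, but no disc contains an unbounded $U$. In that case take $V = \C \setminus \gamma$, where $\gamma \subset U^c$ is the arc from $\infty$ supplied by Assumption~\ref{a:U}. This $V$ is simply connected, so Corollary~\ref{cor3_7} gives $f_U(z) \ge f_V(z) > 0$. This even yields strict positivity, which the paper obtains only later, in Lemma~\ref{lem:fU>0}.

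The discrete route has the appeal of staying probabilistic. If you want to rescue it, compare instead with a genuinely shorted network: cut each offending edge at its crossing point and wire the cut point to $s$, so that Rayleigh applies. You would then have to recheck Proposition~\ref{prop:Y-asymp} and Theorem~\ref{thm2_3} for that modified walk.
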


\begin{proof}
(i) Let $e$ be an edge incident with $z_\eps$ in $U_\eps$. By the Transfer-Current Theorem and 
Proposition \ref{prop:Y-asymp}, we have
\eqnsplst{
 \frac{1}{q} \sum_{r=0}^{q-1} \mathbf{P} [O^r e \in T_{U_\eps}]
 &= \frac{1}{q} \sum_{r=0}^{q-1} Y_\eps(O^r e, O^r e) \\
 &= Y(\hat{e}, \hat{e}) - \eps^2 \frac{1}{q} \sum_{r=0}^{q-1} J_{O^r e, O^r e} + \mathcal{O}(\eps^3). }
Due to Lemma \ref{lem:Javrg}, the right hand side equals
\eqnspl{e:fU-expr}{
 Y(\hat{e}, \hat{e}) - \eps^2 \frac{|\hat{e}|^2}{2} (J_{11} + J_{22}) + \mathcal{O}(\eps^3)
 = Y(\hat{e}, \hat{e}) - \eps^2 |\hat{e}|^2 f_U(z) + \mathcal{O}(\eps^3). }
On the other hand, by monotonicity in the graph, see \cite[Exercise 10.8]{LyonsTrees}, we have
\eqnsplst{
 \frac{1}{q} \sum_{r=0}^{q-1} \mathbf{P} [O^r e \in T_{U_\eps}]
 \le \frac{1}{q} \sum_{r=0}^{q-1} \mathbf{P} [O^r \hat{e} \in T]
 = \mathbf{P} [\hat{e} \in T]
 = Y(\hat{e}, \hat{e}). }
Letting $\eps \to 0$ we get $f_U(z) \ge 0$, proving part (i).

(ii) Due to monotonicity in the graph we have
\eqnsplst{
 \frac{1}{q} \sum_{r=0}^{q-1} \mathbf{P} [O^r e \in T_{U_\eps}]
 \le \frac{1}{q} \sum_{r=0}^{q-1} \mathbf{P} [O^r e \in T_{V_\eps}], }
and the claim follows from \ref{e:fU-expr} letting $\eps \to 0$.
\end{proof}

\begin{lemma}
\label{lem:fU>0}
Suppose that $U$ satisfies Assumption \ref{a:U}. If $z \in U$, we have $f_U(z) > 0$.
\end{lemma}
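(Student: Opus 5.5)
We reduce strict positivity to a comparison with a disc, using the monotonicity of Lemma \ref{lem:monotone}(ii) together with the explicit value for discs from Corollary \ref{cor3_7}.

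\textbf{Step 1 (disc or half-plane-type comparison).} By Assumption \ref{a:U}, $U \subsetneq \C$. In the bounded case (i), choose a disc $D = B_R(z)$ with $R$ large enough that $U \subset D$. Lemma \ref{lem:monotone}(ii) then gives $f_U(z) \ge f_D(z)$. Corollary \ref{cor3_7} gives $f_D(z) = 2/\rad_D(z)^2 = 2/R^2 > 0$, which settles this case.

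\textbf{Step 2 (unbounded case).} Now assume (ii): $U$ has an unbounded boundary component, and by Assumption \ref{a:U} there is a curve in $U^c$. No disc contains $U$, so we compare instead with a simply connected region $V \supset U$ chosen as follows. Take a point $a \in \partial U$ and the simple curve $\gamma \subset U^c$ given by Assumption \ref{a:U}, which starts at $a$ and can be taken to run to $\infty$ within the unbounded boundary component. Set $V = \C \setminus \gamma$. Then $V$ is simply connected, contains $U$, and satisfies Assumption \ref{a:U}. Corollary \ref{cor3_7} gives $f_V(z) = 2/\rad_V(z)^2$, and $\rad_V(z) < \infty$ because $V \ne \C$ (Koebe, or simply because the Riemann map to $\D$ exists and has nonzero derivative). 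Lemma \ref{lem:monotone}(ii) then yields $f_U(z) \ge f_V(z) > 0$.

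\textbf{Main obstacle.} The difficulty lies in justifying Lemma \ref{lem:monotone}(ii) for the comparison region $V$. That lemma's proof passes through Proposition \ref{prop:Y-asymp} and the definition of $f$ for unbounded regions, so we must check that $V = \C\setminus\gamma$ satisfies Assumption \ref{a:U}(ii) and that the $\eps$-limit argument holds for it. The slit boundary is permitted by the paper's handling of slits in $\partial_E U_\eps$, and Section \ref{sec:unbounded} covers the unbounded case.

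If constructing a curve $\gamma$ to infinity is awkward, we would use an alternative argument. Any unbounded connected closed set $K$ in $U^c$ meeting $\partial U$ gives $V = $ the component of $\C \setminus K$ containing $U$. This $V$ is simply connected in the Riemann sphere sense, since its complement $K \cup \{\infty\}$ is connected, and Corollary \ref{cor3_7} applies as before. In the worst case we could instead use the Brownian representation $f_V(z) = -\partial\bar\partial$-type formula \eqref{e:fU-formula}. Conformal covariance (Theorem \ref{thm3_6}) maps $V$ to $\D$, where $f_\D(o) = 2$ is computed directly, so $f_V(z) = 2|\phi_V'(z)|^2 > 0$ because $\phi_V'(z) \ne 0$.
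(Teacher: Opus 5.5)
Your bounded case is exactly the paper's. In the unbounded case you take a genuinely different route. The paper uses the slit domain $V=\C\setminus\gamma$ only as a stepping stone: it maps $V$, and hence $U$, conformally onto a bounded domain, then combines the bounded case with Theorem~\ref{thm3_6}. Theorem~\ref{thm3_6} is a purely continuous identity, so no lattice input is needed on unbounded regions. You instead use $V$ itself as the comparison domain in Lemma~\ref{lem:monotone}(ii) and evaluate $f_V$ by Corollary~\ref{cor3_7}. This has two merits: it gives the explicit bound $f_U(z)\ge 2\,\rad_V(z)^{-2}$, and it avoids checking that the conformal image of $U$ again satisfies Assumption~\ref{a:U}(i), which the paper asserts without comment.

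The cost is that you need Lemma~\ref{lem:monotone}(ii) for a pair of \emph{unbounded} regions. That lemma rests on Proposition~\ref{prop:Y-asymp}, whose proof (via Lemma~\ref{lem:finite-Green} and Theorem~\ref{thm2_3}) is only carried out for bounded regions. Section~\ref{sec:unbounded} extends the expansion of $p_{U,\eps}$, not of UST edge probabilities, so pointing to it is not enough by itself. The fix is short. Let $U_R, V_R$ be the components containing $z$ of $U\cap B_R(z)$ and $V\cap B_R(z)$. Since $U\subset V$, you have $U_R\subset V_R$, so the bounded lemma gives $f_{U_R}(z)\ge f_{V_R}(z)$. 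The convergence $f_{U_R}(z)\to f_U(z)$ and $f_{V_R}(z)\to f_V(z)$ proved in Section~\ref{sec:unbounded} then yields $f_U(z)\ge f_V(z)$.

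Separately, your construction of $\gamma$ is not justified as written. Assumption~\ref{a:U} at a finite point $a\in\partial U$ only gives an arc $\gamma:[0,1]\to U^c$, which need not reach $\infty$. An unbounded boundary component need not contain an arc to $\infty$, and if $\gamma$ is bounded then $\C\setminus\gamma$ is not simply connected. Apply the assumption at $a=\infty\in\partial_\infty U$ instead, as the paper does. This gives an arc from $\infty$ lying in $U^c$, and $V=\C\setminus\gamma$ is then a simply connected region satisfying Assumption~\ref{a:U}(ii). Your fallback with a closed connected set $K$ should be dropped. The resulting $V$ need not satisfy Assumption~\ref{a:U}, so Lemma~\ref{lem:monotone}(ii), whose proof goes through Theorem~\ref{thm2_3} and its Beurling-type estimate, is not available for it.
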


\begin{proof}
First assume that $U$ is bounded.
Take $V \supset U$ to be an open disc in Lemma \ref{lem:monotone}(ii). Then by Corollary \ref{cor3_7}, 
we have $f_V(z) > 0$, and the claim follows from Lemma \ref{lem:monotone}(ii).

In the unbounded case, we can map $U$ conformally to a bounded region satisfying Assumption
\ref{a:U}(i). Indeed, there is a simple curve $\gamma$ in the Riemann sphere $\mathbb{C}_\infty$
starting at $\infty$ such that $U \subset V := \mathbb{C} \setminus \gamma$. Then a conformal
mapping of $V$ to a bounded simply connected region exists, and this maps $U$ conformally 
to a bounded region. Now the claim follows from Theorem \ref{thm3_6}.
\end{proof}

It remains to prove that $c_G > 0$.

\begin{lemma}
\label{lem:asymp-diff}
There exists $c = c(\cG,U,z) > 0$ such that for all sufficiently small $\eps > 0$ we have
\eqn{e:asymp-diff}
{ p_{U,\eps}(0;z) 
  \ge p(0) + c \eps^2. }
\end{lemma}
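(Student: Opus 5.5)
The plan is to deduce \eqref{e:asymp-diff} from the expansion already obtained in the proof of Theorem~\ref{thm:scaling-form}(i), namely
\[ p_{U,\eps}(0;z) = p(0) + \eps^2 \frac{c_0}{\pi} c_1 f_U(z) + \mathcal{O}(\eps^3), \]
with $c_1 = \frac12 \sum_{e,f\neq g} (-1)^{n(e)+n(f)} M^{(g)}_{e,f} \langle e,f\rangle$ as in \eqref{e:c_0c_1}. Since Lemma~\ref{lem:fU>0} gives $f_U(z)>0$ and $c_0>0$ by Proposition~\ref{prop:A-asymp}, it suffices to show $c_1>0$. One can then take $c = \frac{c_0}{2\pi} c_1 f_U(z)$ and absorb the $\mathcal{O}(\eps^3)$ term for small $\eps$. (This works for both cases of Assumption~\ref{a:U}, using Section~\ref{sec:unbounded} for unbounded $U$.) First I will recheck the sign convention. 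By Proposition~\ref{prop:Y-asymp}, $I - Y^{(g)}_\eps = (I - Y^{(g)}) + \eps^2 \frac{c_0}{\pi} J + \mathcal{O}(\eps^3)$. The first-order expansion of the determinant is $\det(X+\eps^2 H) = \det X + \eps^2 \sum_{e,f} H_{e,f}\, \mathrm{cof}_{e,f}(X) + \mathcal{O}(\eps^4)$, so the $\eps^2$ coefficient is exactly $\frac{c_0}{\pi}$ times the cofactor-weighted sum, with a plus sign.

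The key step is to recognise $c_1$ as a quadratic form in the adjugate. Write $X := (I - Y)_{e,f \in \cN^{(g)}}$ for the full-lattice matrix, and for $k=1,2$ let $v_k \in \R^{\cN^{(g)}}$ be the vector $(v_k)_e = \hat e_k$ of $k$-th coordinates of the edges. Since $(-1)^{n(e)+n(f)} M^{(g)}_{e,f} = \mathrm{cof}_{e,f}(X) = \mathrm{adj}(X)_{f,e}$ and $\langle e,f\rangle = \sum_k e_k f_k$, we get
\[ c_1 = \frac12 \sum_{k=1}^2 v_k^{T}\, \mathrm{adj}(X)\, v_k . \]
It remains to show that $\mathrm{adj}(X)$ is positive definite and that some $v_k\neq0$.

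For positivity I argue as follows. The transfer-current matrix $Y$ restricted to the edges at $o$ is a principal submatrix of an orthogonal projection (onto the star space in $\ell^2$ of oriented edges, on the full lattice via the limiting $Y$). Hence $0 \le Y \le I$, and any principal submatrix satisfies $X = I - Y^{(g)} \succeq 0$. Moreover $\det X = p(0) > 0$ by Lemma~\ref{lem:MD-trick}(ii), passed to the infinite-volume limit. So $X$ is positive definite, and therefore $\mathrm{adj}(X) = \det(X) X^{-1}$ is positive definite too. Finally, $\cN^{(g)}$ contains at least $q-1\ge 2$ edges at $o$ obtained from $g$ by rotations. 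These are non-zero vectors, so some coordinate vector $v_k$ is non-zero, which gives $c_1>0$.

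The step I expect to need the most care is justifying $0\le Y\le I$ and $\det X = p(0)$ for the infinite-lattice limit, rather than in finite volume. I will do this by taking the limit $\eps\to0$ (equivalently, increasing finite volumes) of the finite-volume statements, where $Y_\Lambda$ is a genuine projection. Positive semidefiniteness is preserved under limits, and strictness then comes from $\det X = p(0) > 0$. The positivity $p(0)>0$ itself holds because the probability that $o$ is a leaf of the UST is bounded below uniformly in the volume by a local modification argument.
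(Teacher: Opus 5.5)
Your proof is correct, but it runs in the opposite logical direction from the paper's.

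\textbf{The paper's route.} The paper proves this lemma directly and probabilistically, and only afterwards deduces $c_\cG>0$ in Theorem~\ref{thm:scaling-form}(iv). It averages the diagonal asymptotics $Y_\eps(O^r e_1,O^r e_1)$ over rotations and uses $f_U(z)>0$. This yields one rotated edge whose absence probability in $T_{U_\eps}$ exceeds its full-lattice value by order $\eps^2$. The paper then writes the leaf event as a product of conditional edge-absence probabilities. It bounds every remaining factor below by its full-plane counterpart via monotonicity in the graph.

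\textbf{Your route.} You prove $c_\cG>0$ first and obtain the lemma as a corollary of part~(i) and $f_U(z)>0$. The key step rewrites the coefficient as $c_1=\tfrac12\,\mathrm{tr}(\mathrm{adj}(X)\Gamma)$, where $\Gamma_{e,f}=\langle \hat e,\hat f\rangle$ is a nonzero positive semidefinite Gram matrix. Since $X=I-Y^{(g)}$ is a compression of $I$ minus an orthogonal projection and $\det X=p(0)>0$, you get $c_1=\tfrac{p(0)}{2}\mathrm{tr}(X^{-1}\Gamma)>0$.

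\textbf{What each approach buys.} Your version gives an explicit, manifestly positive formula for $c_1$. Because it is phrased through the adjugate and a trace, it is also independent of the ordering and sign bookkeeping $(-1)^{n(e)+n(f)}$ in the paper's cofactor expansion. The paper's argument needs only the diagonal entries in Proposition~\ref{prop:Y-asymp} plus monotonicity. It does not need the rotation-averaged identification of the full $\eps^2$ coefficient, so it would still give an order-$\eps^2$ lower bound in settings where that coefficient is not computed.

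\textbf{The shared assumption $p(0)>0$.} Both arguments need it. The paper uses it tacitly when it divides by $\mathbf{P}[e_1\notin T]$ and concludes $c>0$. Your ``local modification'' justification is only a sketch. It requires that the neighbours of $o$ be connected in $\cG\setminus\{o\}$ within a bounded neighbourhood of $o$ (no cut vertex at $o$). You should state this explicitly, since without it $X$ is singular and both $p(0)$ and $p_{U,\eps}(0;z)$ vanish.
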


\begin{proof}
Let $\cN = \{ e_1, \dots, e_N \}$ be an enumeration of the set of edges incident with $z_\eps$.
Averaging as in the proof of Lemma \ref{lem:monotone}(i), we have that
\[ \frac{1}{q} \sum_{r=0}^{q-1} \mathbf{P} [O^r e_1 \not\in T_{U_\eps}]
   = 1 - Y(\hat{e_1}, \hat{e_1}) + \eps^2 |\hat{e_1}|^2 f_U(z)
     + \mathcal{O}(\eps^3). \]
Hence, for sufficiently small $\eps > 0$, there exists $0 \le r < q$ such that 
\[ \mathbf{P} [O^r e_1 \not\in T_{U_\eps}]
   \ge 1 - Y(\hat{e_1}, \hat{e_1}) + \frac{1}{2} \eps^2 |\hat{e_1}|^2 f_U(z). \]
Without loss of generality, assume that $r=0$, so that
\eqn{e:e_1-ineq}{
 \mathbf{P} [e_1 \not\in T_{U_\eps}]
   \ge 1 - Y(\hat{e_1}, \hat{e_1}) + \frac{1}{2} \eps^2 |\hat{e_1}|^2 f_U(z). }

Let $U_{\eps,j}$ denote the graph obtained from $U_\eps$ by deleting $e_1, \dots, e_j$,
and let $\cG_j$ denote the graph obtained similarly from $\cG$. 
Then the left hand side of \eqref{e:asymp-diff} equals
\eqnspl{e:T-prob-prod}{
  \prod_{i=1}^{N-1} \mathbf{P} [ e_i \not\in T_{U_\eps} \,|\, e_1, \dots, e_{i-1} \not\in T_{U_\eps} ]. }
By monotonicity in the graph, see \cite[Lemma 10.3]{LyonsTrees}, for $i = 2, \dots, N-1$, we also have 
\eqnsplst{
  \mathbf{P} [ e_i \not\in T_{U_\eps} \,|\, e_1, \dots, e_{i-1} \not\in T_{U_\eps} ]
  &= \mathbf{P} [ e_i \not\in T_{U_{\eps,i-1}} ] \\
  &\ge \mathbf{P} [ e_i \not\in T_{\cG_{i-1}} ] \\
  &= \mathbf{P} [ e_i \not\in T \,|\, e_1, \dots, e_{i-1} \not\in T ]. }
This gives that \eqref{e:T-prob-prod} is at least
\eqnsplst{
  &\mathbf{P} [ e_1 \not\in T_{U_\eps} ] \prod_{i=2}^{N-1} \mathbf{P} [ e_i \not\in T \,|\, e_1, \dots, e_{i-1} \not\in T ]
  + c' \eps^2 \prod_{i=2}^{N-1} \mathbf{P} [ e_i \not\in T \,|\, e_1, \dots, e_{i-1} \not\in T ] \\
  &\qquad \ge p(0) + c' \eps^2 \frac{p(0)}{\mathbf{P} [e_1 \not\in T]}
  \ge p(0) + c \eps^2. }
\end{proof}

\begin{proof}[Proof of Theorem \ref{thm:scaling-form}(iv).]
This follows immediately from the scaling form in part (i), the estimate in Lemma \ref{lem:asymp-diff}, 
and the already established positivity of $f_U$ in Lemma \ref{lem:fU>0}.
\end{proof}

\section{Examples and counterexamples}
\label{sec:ex-and-counter-ex}

\subsection{The square, triangular, and hexagonal lattices}

We compute $c_\cG$ for the square, triangular and hexagonal lattices.

\emph{The square lattice.} 
In Proposition \ref{prop:A-asymp} we have that $M$ is the identity matrix, and 
$\sigma^2 Z_T = 2$. Thus $c_0 = 1/2$. In equation \ref{e:c_0c_1}, take 
$g = \mathbf{1} \in \mathbb{C}$, so that 
$\mathcal{N} = \{ \mathbf{i}, \mathbf{-1}, \mathbf{-i} \}$. The relevant potential kernel 
values are (see for example \cite{Sp-book}): 
\[ A(o,\mathbf{i})
   = \frac{1}{4} \qquad\qquad 
   A(\mathbf{i},\mathbf{-i})
   = 1 - \frac{2}{\pi} \qquad\qquad
   A(\mathbf{i},\mathbf{-i})
   = \frac{1}{\pi}, \]
with other required values deduced by symmetry. Thus
\[ (I - Y)_{e,f \in \mathcal{N}}
   = \begin{pmatrix}
   \frac{1}{2} & \frac{1}{\pi} - \frac{1}{2} & \frac{1}{2} - \frac{2}{\pi} \\[0.2cm]
   \frac{1}{\pi} - \frac{1}{2} & \frac{1}{2} & \frac{1}{\pi} - \frac{1}{2} \\[0.2cm]
   \frac{1}{2} - \frac{2}{\pi} & \frac{1}{\pi} - \frac{1}{2} & \frac{1}{2}
   \end{pmatrix}. \]
A trite calculation then gives
\[ c_1 = \frac{2(\pi - 2)}{\pi^2} \qquad\qquad
   c_\cG = \frac{c_0 c_1}{\pi} = \frac{\pi - 2}{\pi^3}. \]
We recover a result of \cite{BIP-93} (the case of $i = 0$ with open boundary condition in \eqref{e:JPR-rel}) by observing that for the upper half plane $\mathbb{H}$, we have
$f_{\mathbb{H}}(i) = 2 (1/2)^2 = 1/2$. This gives with $\cG = \mathbb{Z}^2$ that
\[ p_{\mathbb{H}}(0,m \mathbf{i})
   = p_{\mathbb{H},1/m}(0,\mathbf{i})
   = p_{\mathbb{Z}^2}(0) + m^{-2} \frac{2 (\pi - 2)}{\pi^3} \frac{1}{4} + O(m^{-3}), \quad \text{as $m \to \infty$.} \]

\emph{The triangular lattice.}
We fix the neighbours of the origin to be $g = \frac{1}{2} - \frac{\sqrt{3}}{2} \mathbf{i}$
and $e_1 = -\mathbf{1}$, $e_2 = \frac{1}{2} + \frac{\sqrt{3}}{2} \mathbf{i}$. We can take
$\mathbf{h_1} = \frac{3}{2} + \frac{\sqrt{3}}{2} \mathbf{i}$ and
$\mathbf{h_2} = \sqrt{3} \mathbf{i}$, which gives $|\det(M)| = \frac{3 \sqrt{3}}{2}$.
We can take $T = \{ o, e_1 \}$, and hence $Z_T \sigma^2 = 3$ and 
$c_0 = |\det(M)| / Z_T \sigma^2 = \sqrt{3}/2$.
From symmetry and harmonicity, we have
\[ A(o,e_1) = \frac{1}{3} \qquad\qquad
   A(e_1,e_2) = \frac{1}{2}. \]
Thus, with $\mathcal{N} = \{ e_1, e_2 \}$ we have
\[ (I - Y)_{e,f \in \mathcal{N}}
   = \begin{pmatrix}
   \frac{1}{3} & \frac{1}{6} \\[0.2cm]
   \frac{1}{6} & \frac{1}{3}
   \end{pmatrix}. \]
This gives
\[ c_1 
   = \frac{1}{2} \sum_{e,f \not= g} (-1)^{n(e)+n(f)} M^{(g)}_{e,f} \langle e, f \rangle 
   = \frac{1}{2} \left[ \frac{1}{3} - \frac{1}{6} \left( -\frac{1}{2} \right) - \frac{1}{6} \left( - \frac{1}{2} \right) + \frac{1}{3} \right] 
   = \frac{5}{12}, \]
and $c_\cG = c_0 c_1 / \pi = 5 \sqrt{3} / 24 \pi$.

\emph{The hexagonal lattice.} 
We fix the neighbours of the origin to be:
\[ j_1 = \mathbf{1} \qquad 
   j_2 = \frac{1}{2} + \frac{\sqrt{3}}{2} \mathbf{i} \qquad
   j_3 = -\frac{1}{2} + \frac{\sqrt{3}}{2} \mathbf{i} \qquad
   j_4 = - j_1 \qquad 
   j_5 = - j_2 \qquad
   j_6 = - j_3. \]
As periodicity vectors, we can take $\mathbf{h_1} = \mathbf{1}$ and 
$\mathbf{h_2} = j_2 = \frac{1}{2} + \frac{\sqrt{3}}{2} \mathbf{i}$, which gives
$|\det(M)| = \frac{\sqrt{3}}{2}$. We have $\Z_T \sigma^2 = \frac{1}{2}$, so
$c_0 = |\det(M)| / Z_T \sigma^2 = \sqrt{3}$.

The relevant values of the potential kernel are:
\[ A(o,j_1) = \frac{1}{6} \qquad 
   A(j_1,j_2) = \frac{\sqrt{3}}{\pi} - \frac{1}{3} \qquad
   A(j_1,j_4) = \frac{4}{3} - \frac{2 \sqrt{3}}{\pi}. \]
These can be obtained, for example, by the Fourier formula \eqref{e:A(x,y)-formula}.
Thus, with $g = j_1$ and $\mathcal{N} = \{ j_2, j_3, j_4, j_5, j_6 \}$ we have
\[ (I - Y)_{e,f \in \mathcal{N}}
   = \begin{pmatrix}
   \frac{2}{3} & - \frac{1}{6} & -\frac{2}{3} + \frac{\sqrt{3}}{\pi} 
      & 1 - \frac{2\sqrt{3}}{\pi} & -\frac{2}{3} + \frac{\sqrt{3}}{\pi} \\[0.2cm]
   -\frac{1}{6} & \frac{2}{3} & -\frac{1}{6} & -\frac{2}{3} + \frac{\sqrt{3}}{\pi}
      & 1 - \frac{2\sqrt{3}}{\pi} \\[0.2cm]
   -\frac{2}{3} + \frac{\sqrt{3}}{\pi} & -\frac{1}{6} & \frac{2}{3} 
      & -\frac{1}{6} & -\frac{2}{3} + \frac{\sqrt{3}}{\pi} \\[0.2cm]
   1 - \frac{2\sqrt{3}}{\pi} & -\frac{2}{3} + \frac{\sqrt{3}}{\pi} & -\frac{1}{6} 
      & \frac{2}{3} & -\frac{1}{6} \\[0.2cm]
   -\frac{2}{3} + \frac{\sqrt{3}}{\pi} & 1 - \frac{2\sqrt{3}}{\pi}
      & -\frac{2}{3} + \frac{\sqrt{3}}{\pi} & -\frac{1}{6} & \frac{2}{3}
   \end{pmatrix}. \]
This yields:
\[ c_1
   = \pi^{-4} \left[ 162 - 99 \sqrt{3} \pi + \frac{99 \pi^{2}}{2}
     - \frac{5 \sqrt{3} \pi^3}{12} - \frac{25 \pi^{4}}{36} \right], \]
and
\[ c_\cG 
   = \frac{c_0 c_1}{\pi} 
   = \frac{\sqrt{3}}{\pi^5} \left[ 162 - 99 \sqrt{3} \pi + \frac{99 \pi^{2}}{2}
     - \frac{5 \sqrt{3} \pi^3}{12} - \frac{25 \pi^{4}}{36} \right]. \]

\subsection{Reflection and 180-degree symmetry}
\label{ssec:180}

In this section we consider a graph that is symmetric under reflection in the $x$- and $y$-axes,
and hence also under rotation by 180 degrees. The function $f_U$ we get is different from the
conformal radius, even though the potential kernel is asymptotically rotation symmetric.

We take the vertex set to be
\eqnst{
  \cV 
  = \{ n e + m f : \text{$n, m \in \Z$} \}, \quad \text{with } e = \mathbf{1}, \quad
    f = \sqrt{2} \mathbf{i}; }
with each horizontal pair of nearest neighbours connected by $2$ parallel edges, and vertical pairs
by a single edge. The vertical stretching by $\sqrt{2}$ ensures isotropy of the random walk covariance matrix.
Local values of the potential kernel can be computed explicitly via the Fourier transform 
(see \cite{Sp-book,KW16}). 
We get the potential kernel values (see Appendix \ref{ssec:xy-lattice} for more details):
\begin{align*}
  A(o,e) &= \frac{\arcsin(\sqrt{6}/3)}{2 \pi} &
  A(o,f) &= \frac{\arccos(\sqrt{6}/3)}{\pi} & 
  A(e,f) &= \frac{\sqrt{2}}{2 \pi} \\
  A(e,-e) &= \frac{3 \arcsin(\sqrt{6}/3) - \sqrt{2}}{2\pi} & 
  A(f,-f) &= \frac{6 \arccos(\sqrt{6}/3) - 2\sqrt{2}}{\pi} &
\end{align*}
From these and symmetry the relevant entries of the transfer current matrix are readily obtained.

It is convenient to collapse the pairs of parallel edges into single edges with conductance 
$c(e) = 2$ and putting $c(f) = 1$ (resulting in smaller matrices). Denote by $p(e)$ and $p(f)$,
respectively, the probabilities that in the weighted uniform spanning tree on the full lattice 
$o$ is a leaf with the edge at $o$ equal to $e$ and $f$, respectively. Then the probabilities 
in the original graph that $o$ is a leaf with a horizontal or vertical edge, are 
$p(e)/c(e) = p(e)/2$ and $p(f)/c(f) = p(f)$, respectively. We similarly denote 
$p_{U_\eps}(e)$ and $p_{U_\eps}(f)$ the respective weighted spanning tree probabilities 
in $U_\eps$. We get the $\eps^2$ order correction term in $p_{U_\eps}(e)/c(e)$
\eqnsplst{
 &\eps^2 J_{1,1} 
   \frac{-12 \arcsin\! \left(\frac{\sqrt{6}}{3}\right)^{2}+16 \arcsin\! \left(\frac{\sqrt{6}}{3}\right) \pi -4 \pi^{2}-8+16 \sqrt{2}\, \arccos\! \left(\frac{\sqrt{6}}{3}\right)}{\pi^{2}} \\
 &\qquad\quad + \eps^2 J_{2,2} \frac{6 \arcsin\! \left(\frac{\sqrt{6}}{3}\right)^{2}+\left(-8 \pi +4 \sqrt{2}\right) \arcsin\! \left(\frac{\sqrt{6}}{3}\right)+2 \pi^{2}-4}{\pi^{2}}, }
with $p_{U_\eps}(f)/c(f)$ having the same order $\eps^2$ terms. We see that the
the coefficients of $I_{1,2}$ and $I_{2,1}$ vanish.
The ratio of the coefficients of $I_{1,1}$ and $I_{2,2}$ is $\approx 1.347535564$.
This shows that $\eps^2$ order term is not conformally covariant with the asymptotically 
rotation invariant embedding.

The stretch in the vertical direction required to make the coefficients of
$J_{1,1}$ and $J_{2,2}$ equal is:
\eqnsplst{ 
   b 
   &= \frac{2 \sqrt{-3 \arcsin\! \left(\frac{\sqrt{6}}{3}\right)^{2}+4 \arcsin\! \left(\frac{\sqrt{6}}{3}\right) \pi -\pi^{2}+4 \sqrt{2}\, \arccos\! \left(\frac{\sqrt{6}}{3}\right)-2}}{\sqrt{3 \arcsin\! \left(\frac{\sqrt{6}}{3}\right)^{2}+\left(-4 \pi +2 \sqrt{2}\right) \arcsin\! \left(\frac{\sqrt{6}}{3}\right)+\pi^{2}-2}} \\
   &\approx  1.641667175 
   > \sqrt{2}. }
The same stretch $b$ makes the two-point correlation function between heights $0$
(computable as in \cite{Durre2009}), also conformally covariant, hinting at a possible
generalisation to other lattices than we considered. We have no intuitive understanding
of the unusual scaling $b$ required.

\subsection{An isoradial graph}
\label{ssec:isorad}

The following example shows that on general isoradial graphs \cite{K02} even the coefficients of 
$J_{1,2}$ and $J_{2,1}$ will not vanish. Consider a planar isoradial graph 
where $o$ has the three neighbours: $e, f, g$, with corresponding rombus half-angles
$\theta_e = \pi/3$, $\theta_f = \pi/4$, $\theta_g = 5\pi/12$, respectively. 
That is, taking the rombus edges to have length $1$, we 
can set (see Figure \ref{fig:isorad} in Appendix \ref{ssec:isorad-calc}):
\eqnst{
  e:= 1 \qquad\qquad f := \frac{1+i}{2} + \sqrt{3} \frac{-1+i}{2} \qquad\qquad
    g:= (1-\sqrt{3}) \frac{1+i}{2}, }
with conductances 
\eqnst{
  c(e) = \tan(\pi/3) = \sqrt{3} \qquad 
  c(f) = \tan(\pi/4) = 1 \qquad 
  c(g) = \tan(5 \pi/12) = \frac{\sqrt{3}+1}{\sqrt{3}-1}. }
The total conductance of $o$ equals $C := c(e) + c(f) + c(g)$.
We extend the graph in an arbitrary way that is isoradial and doubly periodic
(see Figure \ref{fig:isorad-periodic} in Appendix \ref{ssec:isorad-calc} for a 
possible periodic extension).
Using the method of \cite[Theorem7.1]{K02}, we obtain the potential kernel values (see 
Appendix \ref{ssec:isorad-calc} for more details):
\begin{align*}
  A(o,e) &= \frac{\sqrt{3}}{9} \qquad&\qquad 
  A(o,f) &= \frac{1}{4} \qquad&\qquad 
  A(o,g) &= \frac{5}{12} \frac{\sqrt{3}-1}{\sqrt{3}+1} \\
  A(e,f) &= \frac{1}{12} \frac{\sqrt{3}+1}{\sqrt{3}-1} \qquad&\qquad
  A(e,g) &= \frac{1}{4} \qquad&\qquad 
  A(f,g) &= \frac{\sqrt{3}}{6}. 
\end{align*}
From these the relevant entries of the transfer current matrix are readily deduced. 

As in Section \ref{ssec:180}, we let $p(e), p(f), p(g)$ denote the respective probabilities
that in the weighted spanning tree on the full graph $o$ is a leaf with $e, f, g$ present,
respectively. As a proxy to the height $0$ probability of the sandpile (interpretable as a
limit of probabilities in graphs with rational conductances), we get
\[ \frac{p(e)}{c(e)/C}
   = \frac{p(f)}{c(f)/C}
   = \frac{p(g)}{c(g)/C}
   = -\frac{1}{36}+\frac{11 \sqrt{3}}{72}. \]
For the order $\eps^2$ correction to $p_{U_\eps}(e)/(c(e)/C)$ we get
\[  \eps^2 C \left[ J_{1,1} \left( \frac{\sqrt{3}}{2}-\frac{2}{3} \right) 
     + J_{2,2} \left( \frac{1}{6} \right) 
     + \left( J_{1,2} + J_{2,1} \right) \left( \frac{\sqrt{3}}{6}-\frac{1}{4} \right) \right], \]
with the corrections for $p_{U_\eps}(f)/(c(f)/C)$ and $p_{U_\eps}(g)/(c(g)/C$
being identical.

\textbf{Acknowledgements}
The figures and some of the computations in this paper were carried out using Maple™.

\section{Appendix}
\label{sec:appendix}

Here we present a direct proof of Proposition \ref{prop:A-asymp} based on the 
result of \cite{FU96}. We start with a summary of some preliminary results in 
the next section.

\subsection{Martingales}
\label{ssec:mart}

We will need the following simple lemma.

\begin{lemma}
\label{lem:mart}
For any function $g : \cV \to \R$, and for any $v \in \cV$, we have that
\eqnst{
  g(S_n) - g(S_0) - \sum_{t=0}^{n-1} \frac{1}{\deg(S_t)} \Delta g (S_t), \quad n \ge 0, }
is a martingale under $\Pr^v$.
\end{lemma}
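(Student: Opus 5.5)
We want to show that
\[ M_n := g(S_n) - g(S_0) - \sum_{t=0}^{n-1} \frac{1}{\deg(S_t)} \Delta g(S_t) \]
is a martingale under $\Pr^v$. The plan is to verify the one-step martingale property with respect to the natural filtration $\cF_n = \sigma(S_0,\dots,S_n)$, using the Markov property and the definition of the Laplacian. Since the paper's sign convention is $\Delta_{vw} = \deg(v)\delta_{vw} - a_{vw}$, we have $\Delta g(x) = \deg(x) g(x) - \sum_w a_{xw} g(w)$. The one-step transition probability of simple random walk on the multigraph is $p(x,w) = a_{xw}/\deg(x)$, so
\[ \E[g(S_{n+1}) \mid \cF_n] = \sum_w \frac{a_{S_n w}}{\deg(S_n)} g(w) = g(S_n) - \frac{1}{\deg(S_n)} \Delta g(S_n). \]

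With this sign convention the increment $M_{n+1} - M_n = g(S_{n+1}) - g(S_n) - \deg(S_n)^{-1}\Delta g(S_n)$ has conditional expectation $-2\deg(S_n)^{-1}\Delta g(S_n)$, which is not zero. So, as literally written with the positive Laplacian of Section~\ref{ssec:finite-Green}, the sign in front of the sum must be $+$. Equivalently, the statement is correct with $\Delta$ read as the generator $\sum_w a_{xw}(g(w)-g(x))$. I would state this sign reconciliation explicitly. With the correct sign, $\E[M_{n+1}-M_n \mid \cF_n] = 0$ follows immediately from the display above.

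Integrability is automatic. Under $\Pr^v$ the walk after $n$ steps is confined to the finitely many vertices within graph distance $n$ of $v$, by local finiteness and periodicity of $\cG$. Hence each $M_n$ is bounded, in particular integrable, and is $\cF_n$-measurable.

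There is no real obstacle here; the only care needed is the sign convention for $\Delta$ and the treatment of multiple edges. The latter is handled by $a_{xw}$ counting parallel edges, which matches the walk's transition law.
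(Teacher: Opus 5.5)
Your proof is correct and is the standard one-step Markov-property computation; the paper gives no proof of this lemma (it calls it simple), so there is nothing different to compare against. Your sign observation is also right: with the paper's convention $\Delta g(x)=\sum_w a_{xw}(g(x)-g(w))$, the martingale is $g(S_n)-g(S_0)+\sum_{t=0}^{n-1}\deg(S_t)^{-1}\Delta g(S_t)$, and the slip is harmless for the paper, since in the proof of Proposition~\ref{prop:A-asymp} those compensator sums are only bounded in absolute value.
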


In applying Lemma \ref{lem:mart}, we will make use of the following simple result.

\begin{lemma}
\label{lem:lapl}
Let $g : \R^2 \to \R$ have continuous and bounded partial derivatives of order up to $3$.
Then for any $u \in \cV$ we have that
\eqnst{
  \Delta g(u)
  = \sum_{u' \sim u} (g(u) - g(u'))
  = - \frac{1}{4} \Delta^{(c)} g (u) \sum_{u' \sim u} |u - u'|^2
    + O( \| D^3 g \|_\infty \max_{u'\sim u} |u'-u|^3 ), }
where $\Delta^{(c)} g = (\partial_{11} + \partial_{22}) g$ is the (continuous) Laplacian.
\end{lemma}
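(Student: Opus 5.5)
The statement is the standard Taylor estimate for the graph Laplacian: expand $g$ to second order around $u$ and sum over neighbours. For each neighbour $u' \sim u$ (counted with multiplicity, as in $\deg(u)$), write $h = u' - u \in \R^2$. Taylor's theorem with remainder gives
\[ g(u') - g(u) = \nabla g(u)\cdot h + \tfrac12\, h\cdot D^2 g(u)\, h + R(u,h), \qquad |R(u,h)| \le C\|D^3 g\|_\infty |h|^3 . \]
Summing over $u'\sim u$, the remainders contribute $O(\|D^3 g\|_\infty \max_{u'\sim u}|u'-u|^3)$, since $\deg(u)$ takes only finitely many values by periodicity. This leaves a first-order and a second-order term to identify.

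The key input is Assumption \ref{a:graph}(ii): the neighbourhood of $u$, viewed as a multiset of vectors $h$, is invariant under the rotation $O$ by $2\pi/q$ about $u$, with $q\ge 3$. For the first-order term, $\sum_{u'\sim u} h$ is a vector fixed by $O$. Since $q\ge 3$, the rotation $O$ has no nonzero fixed vectors, so this sum vanishes.

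For the second-order term, let $S := \sum_{u'\sim u} h h^{T}$. This symmetric matrix satisfies $O S O^{T} = S$. Averaging over the orbit, exactly as in Lemma \ref{lem:Javrg}, and using $\sum_{r}\cos^2(2\pi r/q) = \sum_r \sin^2(2\pi r/q) = q/2$ and $\sum_r \sin(4\pi r/q) = 0$ for $q\ge 3$, shows that $S$ is a multiple of the identity: $S = \tfrac12 (\operatorname{tr} S)\, I = \tfrac12\big(\sum_{u'\sim u}|h|^2\big) I$. It follows that
\[ \sum_{u'\sim u} \tfrac12\, h\cdot D^2 g(u)\, h = \tfrac12 \operatorname{tr}\big(D^2 g(u)\, S\big) = \tfrac14\, \Delta^{(c)} g(u) \sum_{u'\sim u}|u'-u|^2 . \]
Multiplying by $-1$ to pass from $\sum_{u'\sim u} (g(u') - g(u))$ to $\Delta g(u) = \sum_{u'\sim u}(g(u)-g(u'))$ gives the claimed formula.

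There is no real obstacle. The only point needing care is that the isotropy of $S$ uses $q\ge 3$: for $q=2$ the averaging argument fails, which is precisely the situation of the counterexample in Section \ref{ssec:180}. Multiple edges are harmless, since they just repeat vectors in the invariant multiset.
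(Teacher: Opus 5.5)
Your proof is correct and takes essentially the paper's route: a second-order Taylor expansion, cancellation of the linear term by rotational symmetry, and the trigonometric averaging identities for $q \ge 3$. The only difference is packaging: you show the second-moment matrix $\sum_{u'\sim u} h h^{T}$ is isotropic, whereas the paper sums the rotated Hessian quadratic form over each orbit of neighbours.

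One caveat on your closing aside. In the lattice of Section~\ref{ssec:180}, the vertical stretch gives $\sum_{u'\sim u} h h^{T} = 4I$, so the conclusion of this lemma still holds there. What breaks down for $q=2$ is the rotational averaging of $J_{e,f}$ in Lemma~\ref{lem:Javrg}, not this Laplacian estimate.
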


\begin{proof}
By a Taylor expansion, we have
\eqnsplst{
  g(u) - g(u')
  = \nabla g \cdot (u - u') - \frac{1}{2} (D^2 g)(u-u',u-u') + O( \| D^3 g \|_\infty 
  \max_{u'\sim u} |u'-u|^3 ). }
Due to the rotational symmetry assumed, the sum over $u' \sim u$ of the first term on the right hand side 
vanishes. For the second term, fix some $u'_0 \sim u$ and let $O$ denote rotation by 
an angle $2\pi/q$, where $q = q_u \ge 3$ is the symmetry angle of $\cG$ about $u$. 
Let $u'_r - u = O^r (u'_0 - u)$.
Abbreviate $\cos(2\pi r/q_u) =: c(r)$ and $\sin(2\pi r/q_u) =: s(r)$, and 
$\partial_{ij} = \partial_{ij} g$. Then we have
\eqnspl{e:D2g}{
  &D^2 g (u'_r - u, u'_r - u)
  = (u'_r - u)^T \begin{pmatrix}
      \partial_{11} g & \partial_{12} g \\
      \partial_{21} g & \partial_{22} g
  \end{pmatrix} (u'_r - u) \\
  &= (u'_0 - u)^T \begin{pmatrix}
      c(r) & s(r) \\
      -s(r) & c(r)
  \end{pmatrix} \begin{pmatrix}
      \partial_{11} & \partial_{12} \\
      \partial_{12} & \partial_{22}
  \end{pmatrix} \begin{pmatrix}
      c(r) & -s(r) \\
      s(r) & c(r)
  \end{pmatrix} (u'_0 - u) \\
  &= (u'_0 - u)^T \\
  &\quad \begin{pmatrix}
      c^2(r) \partial_{11} + s^2(r) \partial_{22} + 2 s(r) c(r) \partial_{12} 
            & \!\!-c(r) s(r) [ \partial_{11} - \partial_{22} ] + (c^2(r) - s^2(r)) \partial_{12} \\
      -c(r) s(r) [ \partial_{11} - \partial_{22} ] + (c^2(r) - s^2(r)) \partial_{12}
            & \!\!s^2(r) \partial_{11} + c^2(r) \partial_{22} - 2 s(r) c(r) \partial_{12}
  \end{pmatrix} \\
     &\qquad (u'_0 - u). }
Using the identities 
\begin{align}
\label{e:averaging}
  \sum_{r=0}^{q-1} c^2(r)
  &= \frac{q}{2} & 
  \sum_{r=0}^{q-1} s^2(r)
  &=\frac{q}{2} \\
  \sum_{r=0}^{q-1} c(2r)
  &= 0 & 
  \sum_{r=0}^{q-1} s(2r)
  &= 0, 
\end{align}
the sum of \eqref{e:D2g} over $r = 0, \dots q-1$ equals
\eqnsplst{
  (u'_0 - u)^T \begin{pmatrix}
      \frac{q}{2} \Delta^{(c)} g & 0 \\
      0 & \frac{q}{2} \Delta^{(c)} g
  \end{pmatrix} (u'_0 - u)
  = \frac{q}{2} \Delta^{(c)} g |u'_0 - u|^2
  = \frac{1}{2} \Delta^{(c)} g \sum_{r=0}^{q-1} |u'_r - u|^2. }
Summing over all the orbits of neighbours of $u$ (under the action of $O$) gives the claim.
\end{proof}

Sometimes we will need to consider $\Z^2$-isomorphic sub-lattices of $\cG$ of the following type:
\eqnst{
  \mathcal{L}^w 
  := \{ w + n h_1 + m h_2 : n, m \in \mathbb{Z} \}, }
where $w \in \cV$, and $h_1, h_2$ are fixed $\mathbb{Z}^2$-periodicity vectors of $\cG$. 
Let $(S_t)_{t \ge 0}$ denote discrete time simple random walk on $\cG$, with 
filtration $(\cF_t)_{t \ge 0}$. Let 
\eqnsplst{
  \tau_0
  &= 0; \\
  \tau_i
  &= \inf \{ t > \tau_{i-1} : S_t \in \mathcal{L}^w \} \quad \text{for $i \ge 1$.} }
Put $Y_i = S_{\tau_i}$, $i \ge 0$.

The following lemma is easy to see by regarding the walk $S$ modulo the lattice $\mathcal{L}^w$
as a finite state Markov chain.

\begin{lemma}
\label{lem:tau-decay}
There exists a constant $c > 0$ only depending on $\cG$ and the fixed vectors $h_1, h_2$ such 
that for all $x, w \in \cV$ we have
\eqn{e:tau-decay}{
  \mathbf{P}^x [ \tau_1 > t]
  \le e^{- c t}. }
\end{lemma}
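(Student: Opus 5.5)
The plan is to view the walk modulo the sublattice $\mathcal{L}^w$, which gives a finite-state Markov chain on $\cV/\mathcal{L}^w$, and to show that this chain reaches the class $[w]$ within a bounded number of steps with probability bounded below, uniformly in the starting state.

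First, since $\cG$ is connected and $\Z^2$-periodic, the quotient chain on the orbit set $\mathcal{O} \cong T$ (the finite fundamental set) is irreducible. The quotient map sends $S_t$ to its class $[S_t]$. The event $S_t \in \mathcal{L}^w$ is exactly the event $[S_t] = [w]$. So $\tau_1$ is the first time $t>0$ at which the quotient chain visits the state $[w]$.

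Second, by irreducibility and finiteness of $T$, for each ordered pair of states $a,b \in T$ there is a path of positive probability from $a$ to $b$. Its length can be taken to be at most $k:=|T|$, with $k\ge 1$ so that the path is nontrivial even when $a=b$. Let $\delta>0$ be the minimum over all $a,b$ of the probability of some such path. Then, uniformly in $x$ and $w$ (there are only finitely many classes $[w]$ and starting classes $[x]$),
\[ \mathbf{P}^x [ \tau_1 \le k ] \ge \delta . \]
This holds because the transition probabilities $a_{uv}/\deg(u)$ are translation invariant, so the probability depends only on $[x]$ and $[w]$. The bound is independent of $h_1,h_2$ except through $T$.

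Third, apply the Markov property at the times $k, 2k, \dots$. On the event $\{\tau_1 > jk\}$, the walk restarted at $S_{jk}$ again hits $\mathcal{L}^w$ within $k$ further steps with probability at least $\delta$. Iterating gives
\[ \mathbf{P}^x[\tau_1 > jk] \le (1-\delta)^j . \]
This yields \eqref{e:tau-decay} for all $t \ge k$ with any $c < -\log(1-\delta)/k$, after adjusting for the integer part $\lfloor t/k\rfloor$. For small $t$ there is no real obstacle in principle: shrinking $c$ so that $e^{-ct}$ stays close to $1$ absorbs the loss, since $(1-\delta)^{\lfloor t/k\rfloor} \le (1-\delta)^{-1}(1-\delta)^{t/k}$.

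The constant $(1-\delta)^{-1}$ is the only delicate point, and the one place I expect friction. Strictly, the bound $\le e^{-ct}$ with no prefactor requires $\mathbf{P}^x[\tau_1>t]<1$ for small $t$ as well, which fails at $t=0$. I would handle this by reading the claim as $\le C e^{-ct}$, or by restricting to $t \ge 1$ and noting that the stated form is what the later estimates actually use. The rest is routine.
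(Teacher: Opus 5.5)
Your argument is correct and follows the paper's route: the paper only remarks that the lemma is seen by regarding the walk modulo $\mathcal{L}^w$ as a finite-state Markov chain, and you supply the irreducibility, the uniform $\delta$, and the Markov-property iteration. Your caveat about the prefactor is also justified, though the literal statement fails at small \emph{positive} $t$, not at $t=0$ (where both sides equal $1$): for example, on the degree-$3$ honeycomb lattice with $[x]=[w]$, every neighbour of $x$ lies in the other translation orbit, so $\mathbf{P}^x[\tau_1>1]=1>e^{-c}$ for every $c>0$. The lemma should therefore be read as $\mathbf{P}^x[\tau_1>t]\le Ce^{-ct}$ (or as holding only for $t\ge t_0$), and this weaker form is all its later uses need.
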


\begin{lemma}
\label{lem:Y-uncorr}
We have that under $\mathbf{P}^w$ the random variables $\Re (Y_1-Y_0)$ and $\Im (Y_1-Y_0)$ are uncorrelated, 
therefore under $\mathbf{P}^w$ the random walk $Y_i - Y_0$, $i \ge 0$ has covariance matrix a
constant times the identity.
\end{lemma}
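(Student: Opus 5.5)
The natural route is to exploit the rotational symmetry of $\cG$ about $w$ given by Assumption \ref{a:graph}(ii). Let $O$ denote the rotation about $w$ by angle $2\pi/q$, $q = q_w \ge 3$, that leaves $\cG$ invariant. The first point to check is that $O$ also maps the sublattice $\mathcal{L}^w$ to itself. The translation group of $\cG$ is normalised by the automorphism $O$, so conjugating a translation by $n h_1 + m h_2$ by $O$ gives a translation of $\cG$ by $O(n h_1 + m h_2)$. I will arrange, or check, that $\mathcal{L}^w$ is the orbit of $w$ under this full translation group; if $h_1,h_2$ generate only a finite-index subgroup, I will instead pass to the full translation group, which only changes $\tau_1$ by a finite-state argument. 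Since $O$ fixes $w$ and conjugates translations to translations, $O(\mathcal{L}^w) = \mathcal{L}^w$.

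Next I will show that the law of $Y_1 - Y_0$ under $\mathbf{P}^w$ is invariant under the rotation $O$, viewed as a linear map on $\C$. Simple random walk started at $w$ is mapped by $O$ to simple random walk started at $O w = w$, because $O$ is a graph automorphism. The hitting time $\tau_1$ of $\mathcal{L}^w$ is preserved, since $O$ preserves $\mathcal{L}^w$. Hence $O(Y_1) - w$ has the same law as $Y_1 - w$. Finiteness of second moments comes from Lemma \ref{lem:tau-decay}, because $|Y_1 - Y_0| \le C\tau_1$.

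Then let $\Sigma$ be the covariance matrix of $Y_1 - Y_0$, which is well defined by the moment bound. Rotation invariance gives $R \Sigma R^T = \Sigma$, where $R$ is the rotation matrix by $2\pi/q$ with $q \ge 3$. I will finish in one of two ways. One is to average as in Lemma \ref{lem:Javrg} using the identities \eqref{e:averaging}: averaging $R^r \Sigma R^{-r}$ over $r$ yields $\frac12 (\mathrm{tr}\,\Sigma) I$, so $\Sigma = \frac12(\mathrm{tr}\,\Sigma) I$. The other is direct: a symmetric matrix commuting with a rotation that is not $\pm I$ must be scalar, since otherwise its eigenspaces, a pair of orthogonal lines, would have to be preserved. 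Either way the off-diagonal entry vanishes, which is the uncorrelatedness of $\Re(Y_1 - Y_0)$ and $\Im(Y_1 - Y_0)$. The claim about the walk $Y_i - Y_0$ then follows from the strong Markov property: translation invariance makes the increments $Y_{i+1} - Y_i$ i.i.d. with the law of $Y_1 - Y_0$ under $\mathbf{P}^w$.

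The only delicate step is the invariance of $\mathcal{L}^w$ under $O$, that is, matching the sublattice $\mathcal{L}^w$ with the translation orbit. This is where I expect the main care to be needed. The rest is routine.
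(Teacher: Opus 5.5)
Your symmetry argument is fine once $O(\mathcal{L}^w)=\mathcal{L}^w$ is known. However, that step is false in the generality of the lemma, and your fallback does not repair it.

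\textbf{Where it fails.} The vectors $h_1,h_2$ are arbitrary fixed periodicity vectors, so $\Z h_1+\Z h_2$ can be a proper sublattice of the full translation group $\Gamma$ of $\cG$. Conjugation by $O$ preserves $\Gamma$, hence the orbit $w+\Gamma$, but it need not preserve $w+\Z h_1+\Z h_2$. For instance, take $\cG=\Z^2$, $w=o$, $h_1=(2,0)$, $h_2=(0,1)$, so $\mathcal{L}^o=2\Z\times\Z$. Under $\mathbf{P}^o$ the walk stops at $(0,\pm1)$ with probability at least $1/4$ each. It never stops at $(\pm1,0)\notin\mathcal{L}^o$. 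So the law of $Y_1-Y_0$ is not invariant under rotation by $\pi/2$. The lemma still holds there (a short computation gives covariance $I$, with $\mathbf{E}^o[\tau_1]=2$), but not for the reason you give.

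\textbf{Why the fallback is insufficient.} Passing to $w+\Gamma$ proves the statement for a different walk $Y'$. To return to $Y$, write $Y_1-Y_0=\sum_{k=1}^{\kappa}\xi_k$, where $\xi_k$ are the i.i.d.\ increments of $Y'$ and $\kappa$ is the first $k\ge1$ with $Y'_k\in\mathcal{L}^w$. You then need Wald's second identity,
\[ \mathbf{E}\bigl[(\textstyle\sum_{k\le\kappa}\xi_k)(\sum_{k\le\kappa}\xi_k)^T\bigr]=\mathbf{E}[\kappa]\,\mathrm{Cov}(\xi_1). \]
That is an optional-stopping argument, not a finite-state one, and it is the missing ingredient.

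\textbf{The paper's route.} The paper uses exactly this kind of argument, but at the level of single steps, which makes the choice of $h_1,h_2$ irrelevant. Rotational symmetry about every vertex gives $\mathbf{E}^w[S_{t+1}-S_t\mid\cF_t]=0$ and a vanishing conditional cross moment. Hence $\Re(S_t-S_0)\Im(S_t-S_0)$ is a martingale, and optional stopping at $\tau_1$ (justified by Lemma \ref{lem:tau-decay}) gives uncorrelatedness. As written, the paper treats only the cross term. Equality of the two variances follows the same way: for the complex square, $\mathbf{E}^w[(S_{t+1}-S_t)^2\mid\cF_t]=0$, because rotation by $2\pi/q$ with $q\ge3$ multiplies $(y-S_t)^2$ by $e^{4\pi i/q}\neq1$. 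So $(S_t-S_0)^2$ is a martingale as well.

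\textbf{When your proof works.} If you restrict to $h_1,h_2$ generating $\Gamma$, your proof is complete and genuinely different. This restriction suffices for the lemma's use in the proof of Proposition \ref{prop:A-asymp}, where the periodicity vectors may be chosen freely. Your route uses only the single global rotation about $w$ acting on the stopped walk. The relation $R\Sigma R^T=\Sigma$ then gives the zero cross term and the equal variances at once.
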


\begin{proof}
Due to the rotational symmetry around every vertex of $\cG$, we have that
\eqnspl{e:martSw}{
  \E^w [ S_{t+1} - S_t \,|\, \cF_t ] = 0. }
By the same reason, we also have
\eqnsplst{
  \E^w [ \Re(S_{t+1} - S_t) \Im(S_{t+1} - S_{t}) \,|\, \cF_t ]
  = 0. }
This implies that
\eqnsplst{
  &\E^w [ \Re(S_{t+1} - S_0) \Im(S_{t+1} - S_0) \,|\, \cF_t ] \\
  &\quad = \Re(S_{t} - S_0) \Im(S_{t} - S_0) 
    + \E^w [ \Re(S_{t+1} - S_t) \Im(S_{t} - S_0) \,|\, \cF_t ] \\
  &\qquad\quad + \E^w [ \Re(S_{t} - S_0) \Im(S_{t+1} - S_t) \,|\, \cF_t ] \\
  &\quad = \Re(S_{t} - S_0) \Im(S_{t} - S_0), }
where we used \eqref{e:martSw}. By the Optional Stopping Theorem, using the decay in Lemma 
\ref{lem:tau-decay},
we get that
\eqnst{
  \E^w [ \Re(S_{\tau_1} - S_0) \Im(S_{\tau_1} - S_0) ]
  = 0, }
as claimed.
\end{proof}

\subsection{{Alternative proof of Proposition \ref{prop:A-asymp}}}
\label{ssec:Proof-prop:A-asymp}

Let $A^{(0)}_w(v)$, $v \in \cL^w$, denote the potential kernel of the random walk $Y$, that is,
\eqnsplst{
  A^{(0)}_w(v)
  = \lim_{N \to \infty} \, \frac{1}{\deg(w)} \sum_{i=0}^{N} \left[ \Pr^w[ Y_i = w ] 
    - \Pr^w[ Y_i = v ] \right]. }
Applying \cite[Theorem 2]{FU96} to the walk $Y$, in light of Lemma \ref{lem:Y-uncorr}, 
we get that for some constants $c_0([w]) = c_0([w],\cG)$, $C_0([w]) = C_0([w],\cG)$ 
and a smooth function $U_2$ on the unit circle (also depending on $[w]$ and $\cG$), we have 
\eqn{e:A0-asymp}{
  A^{(0)}_w(v)
  = \frac{c_0([w])}{\pi} \log |v - w| + C_0([w]) + \frac{U_2((v - w)/|v - w|)}{|v - w|^2} 
    + O \left( |v - w|^{-3} \right), }
as $| v - w | \to \infty$, $v \in \cL^w$.

We will need the following a-priori bound:
\eqn{e:A-a-priori}
{ A(w,v) 
  = O \left( 1 \vee \log | v - w | \right), \quad v, w \in \cV. }

The following lemma shows that $A^{(0)}_w(\cdot)$ coincides 
with the restriction of $A(w,\cdot)$ to the subset $\cL^w$. 
A proof can be found in \cite[Lemma 21]{HS21}.

\begin{lemma}[{\cite[Lemma 21]{HS21}}] 
For all $w \in \cV$, $v \in \cL^w$ we have $A(w,v) = A^{(0)}_w(v)$.
\end{lemma}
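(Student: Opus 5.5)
Set $h(v) := A(w,v) - A^{(0)}_w(v)$ for $v \in \cL^w$. The plan is to show three things: $h$ is harmonic for the induced walk $Y$ on all of $\cL^w$; $h$ grows at most logarithmically; and $h(w)=0$. A Liouville-type theorem for $Y$ then forces $h \equiv 0$. The normalisation is immediate: $A(w,w)=0$ and $A^{(0)}_w(w)=0$ straight from the definitions, so $h(w)=0$. The growth bound is also easy: $A(w,\cdot)=O(1\vee\log|v-w|)$ by \eqref{e:A-a-priori}, and $A^{(0)}_w(\cdot)$ is $O(1\vee \log|v-w|)$ by \eqref{e:A0-asymp}.

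\emph{Step 1: the harmonic identity for $A(w,\cdot)$ along $Y$.} Apply Lemma \ref{lem:mart} with $g = A(w,\cdot)$. By \eqref{e:A-apriori}, $\Delta g = -\delta_w$, so
\[ A(w,S_n) + \sum_{t=0}^{n-1} \frac{\mathbf{1}_{S_t=w}}{\deg(w)} \]
is a $\Pr^v$-martingale. I would stop it at $\tau_1$. Optional stopping is justified as follows: by Lemma \ref{lem:tau-decay}, $\tau_1$ has exponential tails; the walk has bounded steps, so $|S_{t\wedge\tau_1}|\le |v|+C\tau_1$; combined with the logarithmic bound on $A$, this gives uniform integrability. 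Since $w\in\cL^w$ and $S_t\notin\cL^w$ for $0<t<\tau_1$, the only possible visit to $w$ before $\tau_1$ is at $t=0$. Hence
\[ A(w,v) = \E^v[A(w,Y_1)] - \frac{\mathbf{1}_{v=w}}{\deg(w)}, \qquad v\in\cL^w. \]

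\emph{Step 2: the same identity for $A^{(0)}_w$.} By $\Z^2$-periodicity, $Y$ started on $\cL^w$ is a random walk on the lattice $\cL^w\cong\Z^2$ with i.i.d.\ increments. These increments have exponential moments (Lemma \ref{lem:tau-decay}), mean zero (by \eqref{e:martSw} and optional stopping), and a law invariant under $x\mapsto -x$. The last point uses reversibility and periodicity, and I expect it to take a short argument; alternatively, one can avoid symmetry by working with $\Pr^v[Y_i = w]$ directly. Writing $\Pr^w[Y_i=v]=\Pr^v[Y_i=w]$, the standard telescoping computation from the defining series (Markov property at time $1$, then letting $N\to\infty$ along the convergent limit) gives
\[ \E^v[A^{(0)}_w(Y_1)] - A^{(0)}_w(v) = \frac{\mathbf{1}_{v=w}}{\deg(w)}. \]
Subtracting this from Step 1, the delta terms cancel, so $h(v)=\E^v[h(Y_1)]$ for every $v\in\cL^w$.

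\emph{Step 3: Liouville.} The walk $Y$ on $\cL^w$ is irreducible, because $\cG$ is connected and each $\cL^w$-point is reachable through excursions. It is also genuinely two-dimensional, with nondegenerate covariance by Lemma \ref{lem:Y-uncorr}, and has finite exponential moments. For such walks, harmonic functions of sublinear growth are constant. I would prove this by showing each discrete difference $D_x h(v)=h(v+x)-h(v)$ is a harmonic function that tends to $0$ at infinity. The argument uses a gradient estimate for harmonic functions of walks with exponential moments (as in \cite{LL2010}, Chapter 6): on a ball of radius $R$, $|D_x h|\lesssim |x|\,R^{-1}\sup_{B_{2R}}|h| = O(R^{-1}\log R)\to 0$. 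The maximum principle then gives $D_x h\equiv 0$. So $h$ is constant, and $h(w)=0$ yields $h\equiv0$.

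I expect the main obstacle to be this Liouville step for a walk with unbounded (though exponentially decaying) jumps. The gradient estimate must be checked for non-finite-range walks, or replaced by a coupling argument: couple $Y$ from $v$ and from $v+x$ so that they meet before exiting $B_R$ with probability $1-O(|x|/R)$, and combine this with the logarithmic growth and exponential tails to control the contribution of the overshoot.
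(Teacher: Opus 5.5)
Your proof is correct. The paper gives no argument of its own here and simply cites \cite[Lemma 21]{HS21}, so what you have is an independent, self-contained route.

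Your uniqueness argument runs as follows. Both $A(w,\cdot)|_{\cL^w}$ and $A^{(0)}_w$ solve $\E^v[F(Y_1)]-F(v)=\mathbf{1}_{v=w}/\deg(w)$ on $\cL^w$. Both vanish at $w$ and both are $O(1\vee\log|v-w|)$. An irreducible, symmetric walk on $\cL^w\cong\Z^2$ with exponential moments has no nonconstant harmonic functions of sublinear growth, so the two agree.

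A more direct route would match visit counts, since the visits of $S$ to $\cL^w$ are exactly the steps of $Y$, and then pass to the limit in the truncated series defining $A$. That requires controlling the random time change $\tau_i$ jointly with $Y_i$. Your route avoids this. It uses only the crude inputs \eqref{e:A-a-priori}, \eqref{e:A0-asymp} and Lemma \ref{lem:tau-decay}, at the cost of a Liouville theorem. Your argument is only as independent as \eqref{e:A-a-priori}, which the paper asserts without proof. The paper's appendix relies on it in the same way, so you introduce no new circularity.

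Three technical points.
\begin{itemize}
\item In Step 1 the martingale has the wrong sign. With $\Delta g(u)=\sum_{u'\sim u}(g(u)-g(u'))$ one has $\E[g(S_{t+1})-g(S_t)\mid\cF_t]=-\Delta g(S_t)/\deg(S_t)$. So the martingale is $A(w,S_n)-\sum_{t<n}\mathbf{1}_{S_t=w}/\deg(w)$; as a check, $\E^w[A(w,S_1)]=1/\deg(w)$ by \eqref{e:A-apriori}. Lemma \ref{lem:mart} as printed carries the same sign error. Your displayed identity $A(w,v)=\E^v[A(w,Y_1)]-\mathbf{1}_{v=w}/\deg(w)$ is nonetheless the correct one and matches Step 2, so the delta terms do cancel.
\item In Step 2, interchanging $\lim_N$ with the infinite sum over the value of $Y_1$ needs a bound $\bigl|\sum_{i\le N}[\Pr^w(Y_i=w)-\Pr^u(Y_i=w)]\bigr|\le C(1\vee\log|u-w|)$ uniformly in $N$. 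This follows from the Fourier representation. Because the increment law is symmetric, $\hat\mu$ is real, so $\bigl|\sum_{i\le N}\hat\mu^i\bigr|\le 2/(1-\hat\mu)$.
\item Step 3 can be done without gradient estimates or couplings for unbounded jumps. Since $h$ grows logarithmically it is tempered, and $h=\mu*h$ gives $(1-\hat\mu)\hat h=0$ as distributions on the torus. Here $\hat\mu$ is real-analytic by the exponential moments. By irreducibility, $\hat\mu$ equals $1$ only at $\theta\in2\pi\Z^2$, in the coordinates $\cL^w\cong\Z^2$. Hence $\hat h$ is supported at the origin, so $h$ is a polynomial. Logarithmic growth forces $h$ to be constant, and $h(w)=0$ gives $h\equiv 0$.
\end{itemize}
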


Thus, in order to prove Proposition \ref{prop:A-asymp}, it remains to extend the 
asymptotics \eqref{e:A0-asymp} from the subset $\cL^w$ to all of $\cV$. 

\begin{proof}[Proof of Proposition \ref{prop:A-asymp}.]
Let us fix $w \in \cV$. Let $v \in \cV$ with $|v - w| \gg 1$. Consider the stopping times
\eqnst{
  \sigma
  := \inf \{ t \ge 0 : | S_t - v | > |v-w|/2 \}; \qquad\qquad
  \tau'
  := \tau_1 \wedge \sigma. }
Using that $A(w,\cdot)$ is discrete harmonic away from $w$, and using the Optional 
Stopping Theorem, for any $N \ge 1$ we have that
\eqnst{
  A(w,v)
  = \E^v \left[ A(w,S_{\tau' \wedge N}) \right]
  = \E^v \left[ A(w,S_{\tau'}) \mathbf{1}_{\tau' \le N} \right]
    + \E^v \left[ A(w,S_{N}) \mathbf{1}_{\tau' > N} \right]. }
The upper bound of \eqref{e:A-a-priori} and the decay bound
\eqref{e:tau-decay} imply that the second term is exponentially small in $N$. 
Hence by \eqref{e:A-nonneg} and monotone convergence we get
\eqnst{
  A(w,v)
  = \E^v \left[ A(w,S_{\tau'}) \right]. }
Using that 
\eqnst{
  \Pr^v [ |S_{\tau_1} - v| \ge |v - w|/2 ]
  \le \Pr^v [ \tau_1 > c(\cG) |v - w|]
  \le e^{- c |v - w|}, }
we have that
\eqnst{
  A(w,v)
  = \E^v \left[ A(w,S_{\tau_1}) \mathbf{1}_{\tau_1 \le \sigma} \right]
    + O \left( e^{- c |v - w|} \right). }
On the event $\{ \tau_1 \le \sigma \}$ we have by \eqref{e:A0-asymp} that
\eqnst{
  A(w,S_{\tau_1})
  = \frac{c_0([w])}{\pi} \log |S_{\tau_1} - w| + C_0([w]) 
    + \frac{U_2((S_{\tau_1} - w)/|S_{\tau_1} - w|)}{|S_{\tau_1} - w|^2} 
    + O \left( |v - w|^{-3} \right). }
Applying Lemma \ref{lem:mart} twice, once with $g_1(u) = \log| u - w |$ and then with 
$g_2(u) = \frac{U_2((u - w)/|u - w|)}{|u - w|^2}$ we have 
\eqnspl{e:apply-mart}{
  &\E^v \left[ A(w,S_{\tau_1}) \mathbf{1}_{\tau_1 \le \sigma} \right] \\
  &\quad = \frac{c_0([w])}{\pi} \log|v - w| + C_0([w]) 
     + \frac{U_2((v - w)/|v - w|)}{|v - w|^2} + O \left( |v - w|^{-3} \right) \\
  &\qquad\quad + \E^v \left[ \sum_{t=0}^{\tau_1-1} \frac{1}{\deg(S_t)} \Delta g_1(S_t) 
      \mathbf{1}_{\tau_1 \le \sigma} \right]
    + \E^v \left[ \sum_{t=0}^{\tau_1-1} \frac{1}{\deg(S_t)} \Delta g_2(S_t) 
      \mathbf{1}_{\tau_1 \le \sigma} \right]. }
We bound from above the two sums on the right hand side. For $g_1$, consider the function
$\tilde{g}_1 : \R^2 \to \R$ defined by the same formula as $g_1$, but made to vanish outside 
the ball $B(v, 3|v-w|/4)$. Applying Lemma \ref{lem:lapl} to $g_1$ gives 
\eqnst{
  \Delta g_1(u)
  = \frac{1}{4} \Delta^{(c)} \tilde{g}_1 \sum_{u' \sim u} |u' - u|^2 + O ( |v - w|^{-3} )
  = O ( |v - w|^{-3} ), }
where $\Delta^{(c)}$ is the (continuous) Laplacian in $\R^2$. This gives that the first
expectation on the right hand side of \eqref{e:apply-mart} is bounded in absolute value by
\eqnst{
  \frac{C}{|v-w|^3} \E^v [ \tau_1 ]
  = O ( |v - w|^{-3} ). }
For $g_2$ we have $\Delta^{(c)} g_2 = O(|v-w|^{-4})$ so Lemma \ref{lem:lapl} gives 
\eqnst{
  \Delta g_2(u)
  = O ( |v - w|^{-4} ). }
This gives that the second expectation on the right hand side of \eqref{e:apply-mart} is 
$O ( |v - w|^{-4} )$.

In order to complete the proof, we need to show that $c_0$, $C_0$, $U_2$ in fact do not depend 
on $[w]$, and identify the constant in the leading term. From $A(w,v) = A(v,w)$, and the
form of the leading term, we have that $c_0([w]) = c_0([v])$, and hence $c_0$ does not depend
on $[w]$. Subtracting the leading term from $A(w,v) = A(v,w)$, we see that $C_0$ does not
depend on $[w]$ either. Now the same follows for the function $U_2$. To identify $c_0$, 
we transform the walk $Y$ to a walk on $\mathbb{Z}^2$ by setting 
$X_n = M^{-1} (Y_n - Y_0)$, where the matrix $M$ has columns $h_1, h_2$. Thus the 
covariance matrix of $X_1$ equals
\eqnsplst{ 
   Q
   &:= \E^w [ M^{-1} (Y_1-Y_0) (Y_1 - Y_0)^T (M^{-1})^T ] 
   = \E^w [ |Y_1 - Y_0|^2 ] M^{-1} (M^{-1})^T \\
   &=: \frac{\sigma_1^2}{2} M^{-1} (M^{-1})^T. }
The determinant of $\sqrt{Q}$ hence equals $\frac{\sigma_1^2}{2 |\det(M)|}$, and 
therefore, from \cite[Theorem 1]{FU96}, we have
\[ c_0
   = \frac{2 |\det(M)|}{\sigma_1^2} \frac{1}{\deg(w)}. \]
To determine $\sigma_1^2$, using the law of large numbers, we have
\eqnsplst{ 
   \sigma_1^2
   &= \lim_{n \to \infty} \E^w \left[ \frac{|S_{\tau_n} - S_0|^2}{n} \right]
   = \lim_{n \to \infty} 
     \E^w \left[ \frac{|S_{\tau_n} - S_0|^2}{\tau_n} \frac{\tau_n}{n} \right] \\
   &= \E^w [ \tau_1 ] \, \lim_{m \to \infty} \E^w \left[ \frac{|S_m - S_0|^2}{m} \right]. }
Writing $T$ for a fundamental region of $\cL^w$, and using the ergodicity of the 
finite state Markov chain on $T$ induced by the walk $S$, we have that 
\[ \lim_{m \to \infty} \E^w \left[ \frac{|S_m - S_0|^2}{m} \right]
   = \sum_{u \in T} \mu(u) \E^u [ |S_1 - S_0|^2 ]
   = \frac{1}{Z_T} \sum_{u \in T} \sum_{y \sim u} |y-u|^2, \]
where $\mu$ is the invariant distribution of the Markov chain on $T$. On the other hand,
\[ \E^w [ \tau_1 ]
   = \frac{1}{\mu(w)}. \]
this gives
\[ c_0
   = 2 |\det(M)| \frac{\mu(w)}{\deg(w)} 
    \frac{Z_T}{\sum_{u \in T} \sum_{y \sim u} |y - u|^2}
   = \frac{2 |\det(M)|}{\sum_{u \in T} \sum_{y \sim u} |y - u|^2}. \]
\end{proof}

\section{Appendix}
\label{sec:proof-harmonic-conv}

Here we give a summary of the proof of Theorem \ref{thm2_3}. In Section \ref{ssec:Beurling}
we explain how a Beurling-type estimate can be proved in our setting, which is then used 
in Section \ref{ssec:convergence-proof} in the convergence proof.

\subsection{A Beurling-type estimate}
\label{ssec:Beurling}

Let $z \in U_\eps$ and assume that there exists $e \in \partial U_\eps$ with $e_- = z$.
Let $F_R = \partial U_\eps \cap B(z,R)$. Let $\tau_{F_R}$ denote the first time the 
simple random walk on $\cG$ has crossed an edge belonging to $F_R$. Let 
\[ \xi_{z,r}
   = \inf \{ n \ge 0 : S_n \not\in B(z,r) \}. \]
We will need the statement that there exists $C$ only dependent on $\cG$, such that 
\begin{equation}
\label{e:Beurling-edge}
   \Pr^z [ \xi_{z,2R} < \tau_{F_R} ]
   \le C \left( \frac{\eps}{R} \right)^{1/2}. 
\end{equation} 
See \cite{Kesten-Beurling,Lawler-book-intersections,LL04} for close variants, where 
$\tau_{F_R}$ is replaced by the hitting time of a set of vertices (rather than edges).
The proof of \eqref{e:Beurling-edge} can easily be reduced to the proof of a vertex-version. Indeed, let $F_R^- := \{ f_- : f \in F_R \}$, let 
$\tau_{F_R^-} = \inf \{ n \ge 0 : S(n) \in F_R^- \}$ and 
\[ M(R) 
   = \max_{f \in F_R} \Pr^{f_-} [ \xi_{f_-,2R} < \tau_{F_R} ], \qquad
\text{ and } \qquad
   M_-(R) 
   = \max_{f \in F_R} \Pr^{f_-} [ \xi_{f_-,2R} < \tau_{F_R^-} ], \]
Assume $M_-(R) \le C (\eps/R)^{1/2}$. Using that starting at $f_-$, the edge $f$ 
cannot be crossed immediately, we have
\begin{equation*}
\begin{split}
    &\Pr^{f_-} [ \xi_{f_-,2R} < \tau_{F_R} ] 
    \le \sum_{w \sim f_- : w \not= f_+} 
        \Pr^{f_-} [ S(1) = w ] \\
    &\quad\quad \times \left[ \Pr^w [ \xi_{w,2R} < \tau_{F_R^-} ] + 
        \sum_{g \in F_R} \Pr^w [ \tau_{F_R^-} \le \xi_{w,2R},\, S(\tau_{F_R^-}) = g_- ]
        \Pr^{g_-} [ \xi_{g_-,2R} < \tau_{F_R} ] \right] \\
    &\quad \le \left( 1 - \frac{1}{D} \right) \left( C (\eps/R)^{1/2} + M(R) \right), 
\end{split}
\end{equation*}
where $D$ is the maximum degree. This yields $M(R) \le (D-1) C (\eps/R)^{1/2}$.

For the proof of the bound on $M_-(R)$, the line of argument leading to the proof of
\cite[Theorem 2.5.2]{Lawler-book-intersections} ($d=2$ case) can be followed. The argument 
requires the following ingredients.
\begin{itemize}
\item[(i)] Asymptotics of the potential kernel; see Proposition \ref{prop:A-asymp}.
\item[(ii)] Escape probability from a line for a killed random walk. For this, the proof 
 of \cite[Proposition 3.1]{Fukai-halfline} (that considers random walks on $\mathbb{Z}^2$)
 can be adapted. Indeed, assume, for 
 simplicity, that the walk starts at $o$, and recall 
 $\tau_1 =  \inf \{ n \ge 1 : S(n) \in \cL^o \}$, where $\cL^o$ is the lattice generated
 by $\mathbf{h_1,h_2}$. Let $L = \{ n \mathbf{h_1} : n \in \mathbb{Z} \}$, and let
 \begin{equation}
 \label{e:G_L-Fourier}
    G_L(\lambda)
    = \sum_{n=0}^\infty \lambda^n \, \Pr^o [ S(n) \in L ]
    = \frac{1}{2 \pi} \int_{-\pi}^\pi \frac{1}{1 - \phi_\lambda(\theta)} \, d\theta,
 \end{equation}  
 where $\phi_\lambda(\theta) = \E^o [ \lambda^{\tau_1} e^{i \theta S^{(2)}(\tau_1)} ]$, with
 $S(m) = (S^{(1)}(m), S^{(2)}(m))$. We have the Taylor expansion: 
 \[ \phi_\lambda(\theta)
    = \E^o [ \lambda^{\tau_1} ] 
      - \frac{\theta^2}{2} \E^o [ \lambda^{\tau_1} S^{(2)}(\tau_1)^2 ] + e(\theta,\lambda)
    =: \alpha(\lambda) - \frac{\theta^2}{2} \beta(\lambda) + e(\theta,\lambda). \]
 Hence, the integral in the right hand side of \eqref{e:G_L-Fourier} yields
 \eqnsplst{ 
    \mathrm{const} \cdot \left( \frac{1 - \alpha(\lambda)}{\beta(\lambda)/2} \right)^{-1/2} 
    &\sim \mathrm{const} \cdot (1 - \lambda)^{-1/2} \left( \frac{2 \E^o [ \tau_1 ]}{\E^o [ S^{(2)}(\tau_1)^2 ]} \right)^{-1/2} \\
    &= \mathrm{const} \cdot (1 - \lambda)^{-1/2}. }
 This gives $\Pr^o [ T_\lambda < \tau_L ] \sim \mathrm{const} \cdot (1 - \lambda)^{1/2}$, 
 where $\tau_L = \inf \{ n \ge 1 : S(n) \in L \}$ and $T_\lambda$ is a $\mathsf{Geom}(1 - \lambda)$
 killing time independent of the walk.
\item[(iii)] Since the walk $Y_k = S(\tau_k)$, $k = 0, 1, 2, \dots$ is $180$-degree symmetric,
 it can be deduced from (ii), as in \cite{Lawler-book-intersections,Fukai-halfline}, that 
 $\Pr^o [ T_\lambda < \tau_{L_+} ] \sim \mathrm{const} \cdot (1 - \lambda)^{1/4}$.
\item[(iv)] The remaining adaptations to the proof in \cite{Lawler-book-intersections} 
 are minor. Note that the assumption made about boundary points
 $a \in \partial_\infty U$ in Assumption \ref{a:U}, implies the required 
 `discrete connectivity' property of $\partial U_\eps$.
\end{itemize}

\subsection{Convergence proof}
\label{ssec:convergence-proof}

We define
\eqnsplst{
  \Delta^{(\eps)} F(z)
  &= \sum_{f \in \eps \cE: f_- = z} (F(z) - F(z+f)), \quad z \in \eps\cV,\, F : \eps \cV \to \mathbb{R}, \\
  \rho^2_{z,\eps}
  &= \sum_{f \in \eps\cE: f_- = x} |f|^2, \quad z \in \eps\cV. }

\begin{proof}[Summary of proof of Theorem \ref{thm2_3}.] \ \\ 
(i) One can prove that the $u_\eps$ are equicontinuous by the arguments 
given in \cite[Appendix]{CS11}, that go back to \cite{D53,B08}. Since the 
focus in \cite{CS11} is isoradial graphs, we give the main steps to 
indicate how they extend to our setup.

\emph{Step 1.} It is easy to verify the following discrete version of Green's formula:
\eqnsplst{
  \sum_{z \in U_\eps} \left[ H \Delta^{(\eps)} F - F \Delta^{(\eps)} H \right](z) 
  = - \sum_{f \in \partial U_\eps}
    \left[ H(f_-) G(f_+) - H(f_+) G(f_-) \right]. }

\emph{Step 2.} Let $B_{R,\eps}(v)$ be the discrete ball of (Euclidean) radius 
$R$ around $v \in \eps\cV$. It can be derived from the potential kernel asymptotics 
in Proposition \ref{prop:A-asymp} that
\eqnst{
  H_{B_{R,\eps}}(v,f) 
  \asymp \frac{\eps}{R} \quad \text{for all $f \in \partial B_{R,\eps}(v)$,} }
where the implicit constants only depend on $\cG$. See the proof 
of \cite[Proposition A.1]{CS11}.

\emph{Step 3.} Now from Steps 1 and 2 follows the mean value property 
(see \cite[Proposition A.2]{CS11}): 
if $H : B_{R,\eps}(v) \cup \partial B_{R,\eps}(v) \to \mathbb{R}$ is non-negative 
and discrete harmonic in $B_{R,\eps}(v)$, then
\eqnsplst{
 \left| H(v) - \frac{c_0}{2 \pi R^2} \sum_{z \in B_{R,\eps}(v)} H(z) \rho^2_{z,\eps} \right|
 \le C \frac{\eps}{R} H(v), }
where we denote $c_0 = c_0(\cG) = \det(M)/(Z_T \sigma^2)$. Indeed, let
\[ F(z)
   = - A(\eps^{-1} z, \eps^{-1} v) + \frac{c_0}{\pi} \log (R/\eps) + C_0 
     - \frac{c_0}{2\pi} \frac{|z-v|^2 - R^2}{R^2}, \quad z\in B_{R,\eps}. \]
Due to Lemma \ref{lem:lapl} and Proposition \ref{prop:A-asymp} we have 
\eqnsplst{
  \Delta^{(\eps)} F (z)
  &= - \delta_{z,v} + \frac{c_0}{2 \pi R^2} \rho^2_{z,\eps}, \quad z \in B_{R,\eps} \\
  F(z) 
  &= \mathcal{O}(\eps^2/R^2) \quad \text{when $|z-v| = R + \mathcal{O}(\eps)$}. }
Thus applying Green's formula to $F \pm C (\eps^2/R^2)$ and $H$ we deduce
\eqnsplst{
 -C \frac{\eps^2}{R^2} \sum_{f \in \partial B_{R,\eps}} H(f) 
 \le H(v) - \frac{c_0}{2 \pi R^2} \sum_{z \in B_{R,\eps}(v)} H(z) \rho^2_{z,\eps}
 \le C \frac{\eps^2}{R^2} \sum_{w \in \partial^{\mathrm{int}} B_{R,\eps}} H(w), }
where $\partial^{\mathrm{int}} B_{R,\eps}
= \{ w \in B_{R,\eps} : \text{$\exists f \in \partial B_{R,\eps}$ such that $w = f_-$}\}$.
Then Step 2 and Theorem \ref{thm2_2} imply the claim.

\emph{Step 4.} As in \cite[Proposition 2.7]{CS11} and \cite[Corollary 2.9]{CS11}, it follows from 
Step 3 that if $H : B_{R,\eps}(v) \cup \partial B_{R,\eps}(v) \to \mathbb{R}$ is discrete harmonic 
in $B_{R,\eps}(v)$ (not necessarily non-negative), and $v' \sim v$, then
\[ | H(v) - H(v') |
     \le C \frac{\eps}{R} \left[ \max_{y \in \partial B_{R,\eps}(v)} |H(y)| \right]. \]
In particular, if the boundary condition satisfies $\| g \|_\infty \le M$, then $u_\eps$
are uniformly bounded and locally equicontinuous with a modulus of continuity only dependent
on $M$ and the distance from the boundary.

\emph{Step 5.} 
By Step 4, subsequential limits of $(u_\eps)$ exist. Let $u$ be any such limit. 
From Step 3 we get that $u$ satisfies the (continuous) mean-value property. 
Indeed, after adding a suitable constant $c_u$ to $u_\eps$
to make $\tilde{u}_\eps = u_\eps + c_u$ non-negative in the ball $B_{R,\eps}(v)$, we have
\eqnsplst{
  \frac{c_0}{2 \pi R^2} \sum_{z \in B_{R,\eps}(v)} \tilde{u}_\eps(z) \rho^2_{z,\eps}
  &= \frac{c_0}{2 \pi R^2} \sum_{\substack{z \in B_{R,\eps}(v) \\ [z] = [o]}} \tilde{u}_\eps(z)
     \sum_{t \in T} \rho^2_{t,\eps} + \mathcal{O}( \eps/R ) \\
  &= \frac{\eps^2 |\det(M)|}{\pi R^2} \sum_{\substack{z \in B_{R,\eps}(v) \\ [z] = [o]}} \tilde{u}_\eps(z)
     + \mathcal{O}( \eps/R ). }
The last expression converges, as $\eps \to 0$ along the chosen subsequence, to 
\[ \frac{1}{\pi R^2} \int_{z : |z - v| \le R} \tilde{u}(z) m(dz), \]
where $\tilde{u} = u + c_u$, and $m$ is Lebesgue measure. Therefore we deduce 
that $u$ is harmonic.

\emph{Step 6.} 
One checks that $u$ satisfies the boundary conditions of the continuous Dirichlet problem, 
by the arguments of \cite[Proposition 3.3]{CS11}. This uses the Beurling-type estimate
of Section \ref{ssec:Beurling}. 

\medbreak

(ii) Let $h_1, h_2 \in \mathbb{R}^2$ be two linearly independent vectors such that $\cV$ is invariant under 
translations by $h_1, h_2$. The functions
\[ v^{h_i}_\eps(x)
   = \frac{1}{\eps} (u_\eps(x + \eps h_i) - u_\eps(x)), \quad i = 1, 2 \]
are discrete harmonic at every $x$ such that both $x$ and $x+h_i$ are in $U_\eps$. The arguments 
in part (i) can be applied to see that $v^{h_i}_\eps$ are locally equicontinuous, and hence along 
a subsequence converge to a limit $v^{h_i}$ locally uniformly in $U$. From this convergence, we 
deduce that 
$u(x+\delta h_i) - u(x) = \int_0^\delta v^{h_i}(s) ds$ for any $x$ and $\delta$ such that the line segment
joining $x$ and $x + \delta h_i$ lies in $U$. This implies that 
$v^{h_i}(x) = \nabla u(x) \cdot h_i$ for $i = 1,2$, throughout $U$. Consequently, the convergence holds
without restricting to a subsequence.

In order to pass to the corresponding statement for displacements along edges of the graph, assume that 
$x, x+e \in \cV$. Recall the stopping time $\tau_1$ from Section \ref{ssec:mart}, where we now take 
$w = x$. Write
\[ q(n,m) 
   = \Pr^{x+e} [ S_{\tau_1} = x + n h_1 + m h_2 ], \quad n, m \in \Z. \]
Then, since $u_\eps$ is discrete harmonic, we can approximate
\eqnsplst{ 
   u_\eps(x+e) - u_\eps(x)
   &= \sum_{|n|, |m| \le C \log (1/\eps)} q(n,m) [ u_\eps(x + n h_1 + m h_2) - u_\eps(x) ] \\
   &\qquad\qquad\qquad + 
     O( \Pr^x [\tau_1 > C \log(1/\eps)] ). }
The error term is $O( \eps^2 )$ if $C$ is chosen large, due to Lemma \ref{lem:tau-decay}. We also have, 
\eqnsplst{
   e
   &= (x+e) - x
   = \sum_{|n|, |m| \le C \log (1/\eps)} q(n,m) [ x + n h_1 + m h_2 - x ] + O( \eps^2 ) \\
   &= \sum_{|n|, |m| \le C \log (1/\eps)} q(n,m) [ n h_1 + m h_2 ] + O( \eps^2 ). }
Hence the statement follows from the statement about $v^{h_i}$, $i=1,2$.
\end{proof}

\section{Appendix}
\label{sec:calculations}

\subsection{Calculations for $180$-degree symmetric lattice}
\label{ssec:xy-lattice}

In this section we give the details of the calculations used in Section \ref{ssec:180}.
We have \cite{Sp-book,KW16} the Fourier representation:
\begin{equation}
\label{e:A(x,y)-formula}
   A(o,(x,y))
   = \frac{1}{2 \pi i} \oint \frac{1}{2 \pi i} \oint \frac{1 - z^x w^y}{D(z,w)} \frac{dw}{w} \frac{dz}{z}, 
\end{equation}
where $D(z,w) = 6 - 2 z - 2 z^{-1} - w - w^{-1}$, and the integrals are over the unit circle.
Writing $t(z) := z + z^{-1}$, we have
\[ w D(z,w)
   = -w^2 + (6 - 2 t(z)) w - 1
   = - (w - w_-(z))(w - w_+(z)), \]
where 
\[ w_{\pm}(z) 
   = \frac{6 - 2 t(z) \pm \sqrt{4 t(z)^2 - 24 t(z) + 32}}{2}
   = 3 - t(z) \pm \sqrt{(2 - t(z))(4 - t(z))}. \]
Noting that $|w_+(z)| > 1$ when $|z| = 1$ and $z \not= 1$, we can perform the $w$-integral 
in \eqref{e:A(x,y)-formula} and get
\begin{equation}
\label{e:A(x,y)-simpler}
\begin{split}
   A(o,(x,y))
   &= \frac{1}{2 \pi i} \oint \frac{1 - z^x w_-(z)^y}{2 \sqrt{2 - t(z)} \sqrt{4 - t(z)}} 
     \frac{dz}{z} \\
   &= \frac{1}{\pi} \int_0^{\pi} \frac{1 - \cos \theta x \left( 3 - 2 \cos \theta 
      - \sqrt{2 - 2 \cos \theta}\sqrt{4 - 2 \cos \theta} \right)^y}{2 \sqrt{2 - 2 \cos \theta} \sqrt{4 - 2 \cos \theta}} \, d\theta. 
\end{split}
\end{equation}
Using that $1 - \cos \theta = 2 \sin^2(\theta/2)$ and 
$4 - 2 \cos \theta = 6 - 4 \cos^2(\theta/2)$, this can be re-written as:
\begin{equation*}
\begin{split}
   &A(o,(x,y)) \\
   &\qquad = \frac{1}{\pi} \int_0^{\pi}  \frac{1 - \cos (2 x \theta/2) 
     \left( 5 - 4 \cos(\theta/2) - 2 \sin(\theta/2) \sqrt{6 - 4 \cos^2(\theta/2)} 
     \right)^y}{4 \sin(\theta/2) \sqrt{6 - 4 \cos^2(\theta/2)}} \, d\theta. 
\end{split}
\end{equation*}
This gives the integrals:
\begin{align*}
    A(o,e)
    &= A(o,(1,0))
    = \frac{1}{\pi} \int_0^{\pi} \frac{\sin(\theta/2)}{2 \sqrt{6 - 4 \cos^2(\theta/2)}} 
      d\theta
    = \frac{\arcsin (\sqrt{6}/3)}{2 \pi}; \\
    A(o,f)
    &= A(o,(0,1))
    = \frac{1}{\pi} \int_0^{\pi} \frac{\sqrt{6 - 4 \cos^2(\theta/2)} - \sin(\theta/2)}{2 \sqrt{6 - 4 \cos^2(\theta/2)}}
    = \frac{\arccos (\sqrt{6}/3)}{\pi}; \\
    A(e,f)
    &= A(o,(1,1))
    = \frac{1}{\pi} \int_0^{\pi} \frac{\cos(\theta) \sqrt{6 - 4 \cos^2(\theta/2)} - 2 \sin(\theta/2) \cos(\theta) + \sin(\theta/2)}{2 \sqrt{6 - 4 \cos^2(\theta/2)}} \\
    &= \frac{\sqrt{2}}{2 \pi}; \\
    A(e,-e)
    &= A(o,(2,0))
    = \frac{1}{\pi} \int_0^{\pi} \frac{2 \sin(\theta/2) \cos^2(\theta/2)}{\sqrt{6 - 4 \cos^2(\theta/2)}}
    = \frac{3 \arcsin(\sqrt{6}/3) - \sqrt{2}}{2 \pi}; \\
    A(f,-f)
    &= A(o,(0,2)) \\
    &= \frac{1}{\pi} \int_0^{\pi} \frac{(3 - 2 \cos(\theta)) \sqrt{6 - 4 \cos^2(\theta/2)} 
      + 4 \sin(\theta/2) (\cos(\theta) - 2)}{\sqrt{6 - 4 \cos^2(\theta/2)}} \\
    &= \frac{6 \arccos(\sqrt{6}/3) - 2 \sqrt{2}}{\pi}. 
\end{align*}

\subsection{Calculations for an isoradial graph}
\label{ssec:isorad-calc}

Let $u, v, w$ denote the missing vertices of the rhombi with half-angles 
$\theta_e, \theta_f, \theta_g$; see Figure \ref{fig:isorad}.
\begin{figure}
    \centering
    \includegraphics[width=0.6\linewidth]{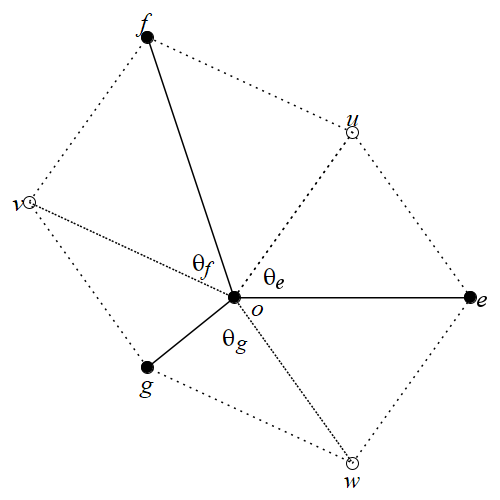}
    \caption{The neighbourhood of $o$ in the isoradial graph.}
    \label{fig:isorad}
\end{figure}

\emph{Computation of $A(o,e), A(o,f), A(o,g)$.} 
In order to apply \cite[Theorem 7.1]{K02}, we define the functions:
\begin{align*} 
   g_o 
   &:= \frac{1}{z} \qquad\quad 
   g_u 
   := g_o \frac{z+ e^{i \theta_e}}{z- e^{i \theta_e}} \qquad\quad
   g_e 
   := g_u \frac{z+ e^{-i \theta_e}}{z- e^{-i \theta_e}} \qquad\quad
   g_v 
   := g_o \frac{z+ e^{i (\theta_e+2 \theta_f)}}{z- e^{i (\theta_e+2\theta_f)}} \\ 
   &\quad
   g_f
   := g_v \frac{z+ e^{i \theta_e}}{z- e^{i \theta_e}} \qquad\quad
   g_w 
   := g_o \frac{z+ e^{-i \theta_e}}{z- e^{-i \theta_e}} \qquad\quad
   g_g 
   := g_w \frac{z+ e^{i (\theta_e+2\theta_f)}}{z- e^{i (\theta_e+2\theta_f)}}
\end{align*}
Consistent angles have to be assigned to each of the poles of $g_e, g_f, g_g$. 
These can be taken to be: \\
$\bullet$ $\theta_e = \pi/3$ and $-\theta_e = -\pi/3$ for the poles of $g_e$; \\
$\bullet$ $\theta_e + 2 \theta_f = 5\pi/6$ and $\theta_e = \pi/3$ for the poles of $g_f$; \\
$\bullet$ $-\theta_e = -\pi/3$ and $-2 \pi + \theta_e + 2 \theta_f = -7\pi/6$ 
for the poles of $g_g$. \\
Then the residue formula of \cite[Theorem 7.1]{K02} gives (note that there is a sign-change
compared to that theorem due to our convention on the sign of the potential kernel):
\begin{align*} 
   A(o,e) 
   &= -\frac{1}{4 \pi i} \left( \theta_e \cdot \mathrm{Res}_{z = e^{i \theta_e}}(g_e(z))
      -\theta_e \cdot \mathrm{Res}_{z = e^{-i \theta_e}}(g_e(z)) \right)
   = \frac{\sqrt{3}}{18} + \frac{\sqrt{3}}{18}
   = \frac{\sqrt{3}}{9} \\
   A(o,f)
   &= -\frac{1}{4 \pi i} \left( (\theta_e + 2 \theta_f) \cdot 
     \mathrm{Res}_{z = e^{i (\theta_e+2\theta_f)}}(g_f(z))
     + \theta_e \cdot \mathrm{Res}_{z = e^{i \theta_e}}(g_f(z)) \right) \\
   &= \frac{5}{12} - \frac{1}{6}
   = \frac{1}{4} \\
   A(o,g)
   &= -\frac{1}{4 \pi i} \left( -\theta_e \cdot 
     \mathrm{Res}_{z = e^{-i \theta_e}}(g_g(z))
     + \left( - 2\pi + \theta_e + 2 \theta_f \right) \cdot 
     \mathrm{Res}_{z = e^{i (\theta_e + 2 \theta_f)}}(g_g(z)) \right) \\
   &= \left( -\frac{1}{3} + \frac{\sqrt{3}}{6} \right)
      + \left( \frac{7}{6} - \frac{7 \sqrt{3}}{12} \right) 
   = \frac{5}{6}-\frac{5 \sqrt{3}}{12}. 
\end{align*}

\emph{Computation of $A(e,f), A(e,g)$.} 
We define the functions:
\begin{align*}
   g'_e 
   &:= \frac{1}{z} \qquad\quad
   g'_u
   := g'_e \frac{z- e^{-i \theta_e}}{z+ e^{-i \theta_e}} \qquad\quad
   g'_f
   := g'_u \frac{z+ e^{i (\theta_e+2\theta_f)}}{z- e^{i (\theta_e+2\theta_f)}} \\
   &\quad 
   g'_w
   := g'_e \frac{z- e^{i \theta_e}}{z+ e^{i \theta_e}} \qquad\quad
   g'_g
   := g'_w \frac{z+ e^{i (\theta_e+2\theta_f)}}{z- e^{i (\theta_e+2\theta_f)}}. 
\end{align*}
The angles for the poles of $g'_f$ and $g'_g$ can be taken to be: \\
$\bullet$ $\pi - \theta_e = 2\pi/3$ and $\theta_e + 2 \theta_f = 5\pi/6$ for 
the poles of $g'_f$; \\
$\bullet$ $-\pi+\theta_e = -2\pi/3$ and $-2 \pi + \theta_e+2\theta_f = -7\pi/6$ 
for the poles of $g'_g$.\\
The residue formulas give:
\begin{align*}
  A(e,f)
  &= -\frac{1}{4 \pi i} \left( \left( \pi - \theta_e \right) \cdot 
     \mathrm{Res}_{z = e^{i (\pi-\theta_e)}}(g'_f(z))
     + (\theta_e + 2 \theta_f) \cdot 
     \mathrm{Res}_{z = e^{i (\theta_e+2\theta_f)}}(g'_f(z)) \right) \\
  &= \left( -\frac{\sqrt{3}}{3}-\frac{2}{3} \right)
     + \left( \frac{5 \sqrt{3}}{12}+\frac{5}{6} \right)
  = \frac{\sqrt{3}}{12}+\frac{1}{6} \\
  A(e,g)
  &= -\frac{1}{4 \pi i} \Big( \left( -\pi + \theta_e \right) \cdot 
     \mathrm{Res}_{z = e^{i (-\pi+\theta_e)}}(g'_g(z)) \\
  &\qquad\qquad + (-2\pi + \theta_e + 2 \theta_f) \cdot 
     \mathrm{Res}_{z = e^{i (\theta_e+2\theta_f)}}(g'_g(z)) \Big) \\
  &= -\frac{1}{3} + \frac{7}{12}
  = \frac{1}{4}. 
\end{align*}

\begin{figure}
    \centering
    \includegraphics[width=0.5\linewidth]{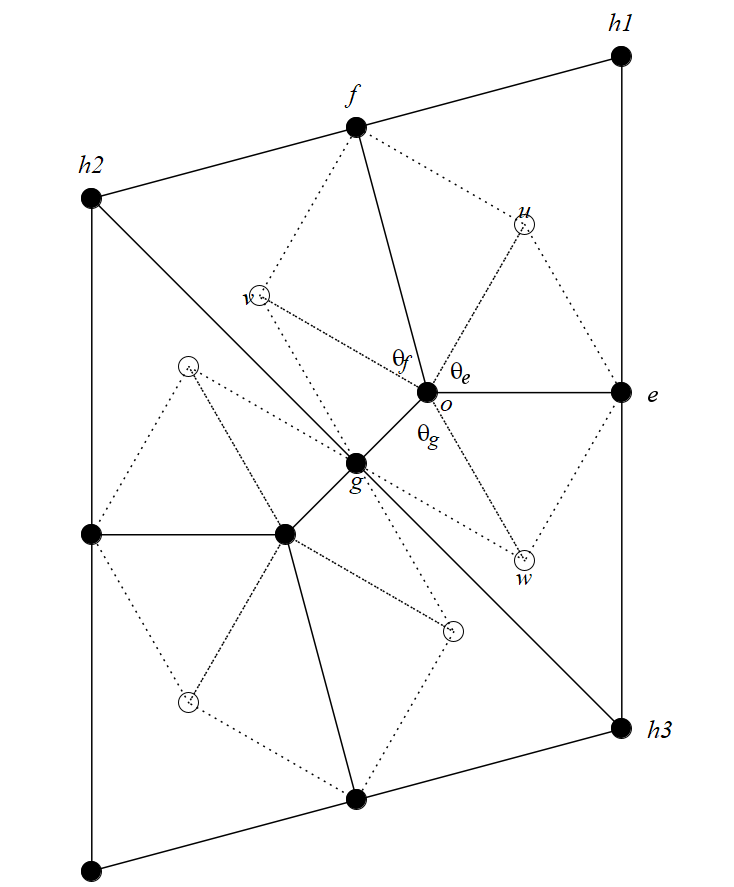}
    \caption{A possible periodic extension: the triangle $h1,h2,h3$ is centrally
    reflected in $g$ to get a possible fundamental domain.}
    \label{fig:isorad-periodic}
\end{figure}

\emph{Computation of $A(f,g)$.} 
We define the functions:
\[ g''_f
   := \frac{1}{z} \qquad\quad
   g''_v
   := g''_f \frac{z- e^{i \theta_e}}{z+ e^{i \theta_e}} \qquad\quad
   g''_g
   := g''_v \frac{z+ e^{-i \theta_e}}{z- e^{-i \theta_e}}. \]
The angles for the poles of $g''_g$ can be taken to be:\\
$\bullet$ $-\pi + \theta_e$ and $-\theta_e$.\\
This gives the value:
\begin{align*}
  A(f,g)
  &= -\frac{1}{4 \pi i} \left( \left( -\pi + \theta_e \right) \cdot 
     \mathrm{Res}_{z = e^{i (-\pi+\theta_e)}}(g''_g(z))
     - \theta_e \cdot 
     \mathrm{Res}_{z = e^{- i \theta_e}}(g''_g(z)) \right) \\
  &= \frac{\sqrt{3}}{3} - \frac{\sqrt{3}}{6}
  = \frac{\sqrt{3}}{6}. 
\end{align*}

\textbf{Funding} 
MWE was supported by a London Mathematical Society Undergraduate Research Bursary 2021.
MWE was also supported by a scholarship from the EPSRC Centre for Doctoral Training in Statistical Applied Mathematics at Bath (SAMBa), under the project EP/S022945/1. 

\bibliographystyle{plain}
\bibliography{refs}

\end{document}